\documentclass[11pt]{article}      %On définit le type de document et la police
\usepackage[T1]{fontenc}           %On définit la police de caractère T1
\usepackage[utf8]{inputenc}        %On définit l'encoding permet de lire certains caractères spéciaux
\usepackage{graphicx}      %Permet la lecture de différent type de format d'image
\usepackage[english]{babel} %Permet de reconnaitre la langue d'écriture et de proposer des corrections
\usepackage{geometry}              %Pour définir des tailles de bord de text paramétrable, je l'utilise pour diminuer la merge des documents classe article
\geometry{top=3.5cm, bottom=3.5cm, left=2.5cm, right=2.5cm}
\usepackage{mathptmx}              %Pour utiliser des symboles mathématiques
\usepackage{amsmath}               %Pour travailler avec des équations mathématique
\usepackage{caption}
\usepackage{subcaption}
\usepackage{verbatim} % Définit des environnements de texte préformaté et de commentaires.
\usepackage{amssymb}
\usepackage{amsfonts}
\usepackage{amsthm}
\usepackage{cancel} % pour barre du texte en mode maths
\usepackage{comment}
\usepackage{hyperref} % pour les liens cliquables 
\usepackage{empheq} % pour les accolades

\usepackage{dsfont}
\usepackage{csquotes}
\usepackage{tikz}
\usepackage{tkz-tab} % pour faire des tableaux de variation
\usepackage{array}
\usepackage{float} % pour utiliser H pour les figures

\usepackage{empheq} % pour les accolades
\usepackage{enumerate} % pour mettre des (i), (ii)...
\usepackage[dvipsnames]{xcolor}
\definecolor{my_green}{RGB}{34,139,34}

% Authors may use other style files (e.g., to create figures), as long as
% they do not alter the layout of the article.
\usepackage{tikz}
\usepackage{subcaption}
\usepackage{authblk}

% Insert here your own definitions, as the following ones:
\newcommand{\dt}{\partial_{t\,}}
\newcommand{\dx}{\partial_{x\,}}
\newcommand{\dy}{\partial_{y\,}}

\newcommand{\RR}{\mathbb{R}}
\newcommand{\NN}{\mathbb{N}}
\newcommand{\ZZ}{\mathbb{Z}}
\DeclareMathOperator{\dive}{div}

\newtheorem{assumption}{Assumption}[section]
\newtheorem{theorem}{Theorem}[section]
\newtheorem{proposition}{Proposition}[section]
\newtheorem{lemma}{Lemma}[section]
\newtheorem{corollary}{Corollary}[section]
\newtheorem{remark}{Remark}[section]
\newtheorem{definition}{Definition}[section]

\title{A Particle method for stationary transport equations}
\date{}

\author[1]{Rafael Bailo}
\author[2,3]{Julie Binard}
\author[3]{Pierre Degond}
\author[2]{Pascal Noble}

\affil[1]{Department of Mathematics and Computer Science\\
Eindhoven University of Technology; Eindhoven, Netherlands\\
email: r.bailo@tue.nl\\
}
\affil[2]{Institut de Mathématiques de Toulouse; UMR 5219\\ 
Université de Toulouse; CNRS \\
INSA, F-31077 Toulouse, France\\
email JB: binard@insa-toulouse.fr\\
email PN: noble@insa-toulouse.fr
}
\affil[3]{Departamento de Matemática Aplicada I\\
Universidad de Sevilla, E.T.S. Arquitectura. Avda Reina Mercedes, 41012 Sevilla, Spain\\
email: jbinard@us.es\\
}
\affil[4]{Institut de Mathématiques de Toulouse; UMR 5219\\ 
Université de Toulouse; CNRS \\
UPS, F-31062 Toulouse Cedex 9, France\\
email: pierre.degond@math.univ-toulouse.fr
}

\begin{document}

\maketitle

% Abstract.
\begin{abstract}
We present and study a Particle method for the stationary solutions of a class of transport equations. This method is inspired by non-stationary Particle methods, the time variable being replaced by one spatial variable. Particles trajectories are computed using the ``time-dependent'' equations, and then the approximation is based on a quadrature method using the particle locations as quadrature points. We prove the convergence of the scheme under suitable regularity assumptions on the data and the solution, together with a ``characteristic completeness'' assumption (the characteristic curves fullfill the whole computational domain). We also provide an error estimate. The scheme is tested numerically on a two dimensional linear equation and we present a numerical study of convergence. Finally, we use this method to carry out numerical simulations of a landscape evolution model, where an erodible topography evolves under the effects of water erosion and sedimentation. The scheme is then useful to deal with wet/dry areas.
\end{abstract}

\paragraph{Keywords}
Transport equation, Stationary solutions, Particle method, Landscape evolution model.
  
\paragraph{Mathematics Subject Classification}
65N12, 65N75, 35B36, 35M30, 35Q86, 86-10.

\section{Introduction}

In this paper we introduce and analyze a Particle method for first order equations in divergence form and with a source term:
\begin{equation}
    \dive({\bf a} u)(x) + a_0(x) u(x) = S(x),\quad \forall x\in \RR^d_+ := (0,+\infty) \times \RR^{d-1},
    \label{eq_system}
\end{equation}
with $d \geq 2$. We assume that the functions ${\bf a}, \, a_0, \, S$ are defined on an open set $U \subset \RR^d$ containing $\overline{\RR^d_+} := [0,+\infty) \times \RR^{d-1}$, and ${\bf a}: U\to \RR^d\setminus\{0\}$ whereas $a_0, \, S: U\to \RR$. We complete the system~\eqref{eq_system}
with the following boundary condition:
\begin{align}
    u(0,\xi) = g(\xi), \quad \forall \xi \in \RR^{d-1},
    \label{boundary}
\end{align}
where  $x \in \RR^d$ is written as $x = (x_1, \xi)$ with $x_1 \in \RR$ and $\xi \in \RR^{d-1}$. We set $\xi = (x_2, \dots, x_d) = (\xi_1, \dots, \xi_{d-1})$.\\

The problem originates from the numerical simulation of landscape evolution models: a landscape is modeled by a function $z: x\mapsto z(x)$ and it is eroded by a fluid layer of height $h$. The sediments created by erosion are transported by the fluid, and the sediment density in the fluid is denoted by $c$. The quantities $(z,h,c)$ are functions of the horizontal coordinate $x$ and time $t$. The evolution system for $(z,h,c)$ introduced in \cite{chen_equations_2014, chen_landscape_2014} and studied in \cite{binard2024well} is written:
\begin{equation}
    \left\{ \begin{array}{lll}
         & \displaystyle
        \dt h+\dive(h {\bf v}) = r, \vspace{1mm} \\
        &\displaystyle
        \dt (hc)+\dive (ch {\bf v}) = \rho_s \, e \left( \frac{h}{H} \right)^m \left( \frac{|{\bf v}|}{V} \right)^n - \rho_s \, s \, \frac{c}{c_{sat}}, \vspace{1mm} \\
        &\displaystyle
        \dt z = K \Delta z - e \left( \frac{h}{H} \right)^m \left( \frac{|{\bf v}|}{V} \right)^n + s \, \frac{c}{c_{sat}}. 
     \end{array} \right.
     \label{eq_landscape_model}
\end{equation}
Here, $\rho_s$ represents the sediment volumetric mass, $e$ is an erosion rate, $s$ a sediment redeposition rate whereas $r$ is a source term representing precipitation, evaporation or infiltration. The constant $K$ is a diffusion constant that represents the soil creep effect. The creep effect is a large-scale effect of soil diffusion that was introduced in~\cite{davis_convex_1892} and used in~\cite{gilbert_convexity_1909} to explain the convexity of hilltop profiles. The erosion is modeled by a stream incision law, see~\cite{howard_channel_1983}. This system is written in a closed form with the fluid velocity written as ${\bf v}=- V \, \nabla (h+z)$, with $V \geq 0$ a characteristic speed of the fluid, and where $h+z$ is the surface elevation. \\

Usually, the fluid speed $|{\bf v}|$ is much larger than the erosion rate $|\partial_t z|$, see e.g. the experiment in~\cite{guerin_streamwise_2020}. In order to focus on the erosion phenomenon, one has to consider large time scales. In this asymptotic regime, the time variations of the fluid height and sediment concentration become negligible: one is left with the equations:
\begin{equation}\label{div-h}
    \left\{ \begin{array}{lll}
         \displaystyle
         \dive(h{\bf v}) = r,\hspace{2mm}\\ 
         \displaystyle
         \dive(ch {\bf v}) = \rho_s \, e \left( \frac{h}{H} \right)^m \left( \frac{|{\bf v}|}{V} \right)^n - \rho_s \, s \, \frac{c}{c_{sat}},
     \end{array} \right.
\end{equation}
supplemented with the equation for $z$ which remains unchanged.
One problem of interest is the erosion of an inclined plane, which can be carried out experimentally \cite{guerin_streamwise_2020}. From a theoretical point of view, this corresponds to a bottom topography $z(x)=-\tan(\theta)x_1+\tilde z(x)$ with $|\tilde z|\ll 1$. The fluid height is prescribed on one side of the plane:
$h(t,0,\xi) = h_{\textrm {ref}}(t,\xi)$ with $\xi \in \RR$. \\

The topography $z$ being fixed, the first equation of \eqref{div-h} is non-linear and could be solved through a fixed point procedure: with $h^0$ an initial guess, one solves
$$
\dive({\bf a}^nh^{n+1})=r,\quad {\bf a}^n=-V\nabla (h^n+z).
$$
%In order to simplify the discussion, we assume ${\bf a}= - V\nabla(h+z)$ is given in order to linearize the equation. We expect that the nonlinear problem is solved through an iterative scheme. 
We are thus left with a problem similar to the linear problem \eqref{eq_system}-\eqref{boundary}. By splitting space variables, equation \eqref{eq_system} can be written as 
\begin{equation}\label{eq_system-evol}  
\displaystyle
\frac{\partial}{\partial x_1}({\bf a_1} u)(x_1,\xi) + \sum_{j=1}^{d-1} \frac{\partial}{\partial \xi_j} \left({\bf a}_{j+1} u \right) (x_1,\xi) + a_0(x_1,\xi) u(x_1,\xi) = S(x_1,\xi), \quad \forall x_1>0, \forall \xi \in\RR^{d-1}.
\end{equation}
The boundary condition \eqref{boundary} makes sense only if it is not characteristic: in what follows, we will assume that $a_1(0,\xi)> 0,\; \forall \xi\in\RR^{d-1}$. If we further assume that $a_1>0$, equation \eqref{eq_system-evol} can be seen as a first-order partial differential equation where $x_1$ plays the role of time. 
Equation \eqref{eq_system-evol} can be solved by a classical finite-volume scheme; however, a CFL-type condition should be imposed to preserve the positivity of $u$, which could be restrictive for small slopes. Moreover, it may be difficult to design the mesh of the computational domain in the presence of complex or moving boundaries. In this paper, we consider a Particle method. 

\medskip
Particle methods are mesh-free numerical schemes that are mostly used to solve fluid-dynamical problems. The principle of such methods is to use a Lagrangian point of view. The solution is seen as a sum of Dirac distributions carrying some information (density, velocity). These Dirac distributions can be seen as particles advected by the fluid flow and the characteristics of each particle are evolved in time by using the physical laws. These Dirac masses are then approximated by kernel functions $\zeta^h, h>0$, which then provide a classical and computable approximation of the solution.
%as a combination of smooth particles. These particles have a mass distribution, represented by a kernel function. They move in the domain, using the Lagrangian description of fluid flow, and transport some quantities which evolve according to suitable laws. 
Smoothed Particle Hydrodynamics (SPH), Particle In Cell (PIC) methods, and vortex methods are examples of such Particle methods. The SPH method was initially introduced to solve numerically problems in astrophysics. It was first designed by Gingold and Monaghan in~\cite{gingold1977smoothed}, and by Lucy in~\cite{lucy1977numerical}. The numerical resolution of astrophysical problems in three dimensions, by finite difference schemes requires a very large number of grid points, whereas the SPH method reaches the same accuracy with a comparatively much smaller number of particles. The SPH method is now used in a wider range of applications like oceanography or structural mechanics and high velocity impacts with large deformations or changes of topology.  
In plasma physics, the Vlasov-Poisson or Vlasov-Maxwell  equations are set in a $6$ dimensional space and it would be very costly to solve them with a classical scheme. The PIC method \cite{evans1957particle} is also a Particle method which also decreases the computational cost. 
The vortex method is a type of Particle method dedicated to the simulation of incompressible Euler equations~\cite{rosenhead1931formation}: it is based on an alternative formulation of the equations involving the vorticity of the fluid velocity, the fluid velocity being reconstructed with theory of potential. This method was designed and implemented in the 70s in two dimensions~\cite{chorin1973discretization}, and later in three dimensions~\cite{leonard1985computing}.
The theoretical analysis of the Particle methods  was initiated by Raviart and coworkers. In \cite{mas1987}, Mas-Gallic and Raviart analyzed a Particle method for one class of first order linear hyperbolic systems. They proved the convergence of the continuous in time scheme in $L^2$. There is now an important literature on the convergence of the SPH method for various types of equations, both in the linear and in the non-linear case: see e.g. ~\cite{ben2000} for non-linear conservation laws, more recent references \cite{evers2018continuum} for a systematical derivation and convergence analysis, and \cite{lind2020review} for a review of recent results and challenges.

\bigskip
Equation ~\eqref{eq_system} is not, in the strict sense of the term, an evolution equation, and classical Particle methods do not apply to it directly.
However, the $x_1$ variable could play the role of a time variable, whereas the boundary condition \eqref{boundary} could be seen as an initial condition provided that it not characteristic, which means that {\bf $a_1\neq 0$} on the boundary $\{0\}\times\RR^{d-1}$.
In this paper we introduce a modified Particle method to solve the stationary equation~\eqref{eq_system}, together with the boundary condition \eqref{boundary}. This method is based on non-stationary Particle methods, and we prove the convergence of the numerical scheme under appropriate assumptions. The idea of the method and a part of the proof of convergence are inspired from \cite{raviart2006analysis}, which analyses a Particle method for linear hyperbolic equations of the first order. The result can be loosely formulated as follows:
\begin{theorem}
    Let $d\in\mathbb{N}$ such that $d\geq 2$. 
    Assume the data ${\bf a}$, $a_0,S,g$ are smooth enough and the characteristic curves fullfill the domain $\RR^d_+$. Then, for all compact sets $X \subset \RR^d_+$, there exist constants $C_X^{(0)}, \dots, C_X^{(3)} >0$ which do not depend on $h$, $\varepsilon$, $\Delta s$, such that, if $C_X^{(0)}\,\Delta s  \leq h,$
    the approximation $\tilde u^{\varepsilon,\Delta s}_h$ given by the scheme \eqref{eq_approx_u_h_epsilon_ds} verifies the error estimate:
    \begin{align}
        \underset{x \in X}{\sup} |u(x) - \tilde u^{\varepsilon,\Delta s}_h(x)| 
        \leq & C_X^{(1)} h^2 + C_X^{(2)} \frac{\varepsilon^{d}}{h^{d}} + C_X^{(3)} \frac{\Delta s}{h},
        \label{eq_error_scheme_intro}
    \end{align}
    where $h$ is the size of support of the kernel function $\zeta^h$, and where $\varepsilon$ and $\Delta s$ are  respectively, the spatial mesh sizes for the transverse variable $\xi\in \RR^{d-1}$ and $s\in\RR_+$.
    \label{th_cv_scheme_intro}
\end{theorem}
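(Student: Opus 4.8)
The plan is to reduce the stationary problem \eqref{eq_system}--\eqref{boundary} to a genuinely time-like transport problem in the variable $s$ (arclength or $x_1$-time along characteristics), and then to adapt the classical Particle-method convergence analysis of Raviart-type \cite{raviart2006analysis,mas1987}. First I would write the characteristic system: for the equation in the form \eqref{eq_system-evol}, with $a_1>0$, the characteristic curves $X(s,\xi_0)$ solve $\frac{d}{ds}X = {\bf a}(X)/a_1(X)$ (or the unnormalised version $\frac{d}{ds}X = {\bf a}(X)$, $\frac{d}{ds}t = a_1(X)$), launched from $X(0,\xi_0) = (0,\xi_0)$. Along each curve, $u$ satisfies a linear ODE $\frac{d}{ds}(J u) = (\dots)$ where $J$ is the Jacobian of the flow map (the Liouville/transport factor arising from the divergence form), so that $u$ is expressed explicitly via Duhamel as an integral of $S$ and the initial data $g$ weighted by the flow Jacobian and the $a_0$-damping. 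The ``characteristic completeness'' hypothesis guarantees this flow map $(\mathbf{s},\xi_0)\mapsto X$ is onto a neighbourhood of the compact set $X\subset\RR^d_+$, and the smoothness hypotheses give $C^k$ control of the flow, its inverse, and the Jacobian $J$ bounded away from $0$ on compacta.

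Next I would set up the Particle approximation mirroring \eqref{eq_approx_u_h_epsilon_ds}: discretise the transverse variable $\xi_0$ with mesh size $\varepsilon$ (giving quadrature points $\xi_0^\alpha$ with weights $\varepsilon^{d-1}$), integrate each characteristic ODE numerically with step $\Delta s$ (giving particle positions $x^{\alpha,k}$ and carried weights $w^{\alpha,k}$), and reconstruct $\tilde u_h^{\varepsilon,\Delta s}(x) = \sum_{\alpha,k} w^{\alpha,k}\,\zeta^h(x - x^{\alpha,k})$. The error is then split into three pieces matched to the three terms of \eqref{eq_error_scheme_intro}: (i) the \emph{smoothing/consistency error}, replacing $u$ by its mollification $\zeta^h * (u\,\cdot)$ — controlled by $C_X^{(1)}h^2$ via the standard moment conditions on the kernel $\zeta^h$ and a Taylor expansion (this is where $h^2$, i.e. a second-order kernel, enters); (ii) the \emph{quadrature error}, replacing the exact integral over $\xi_0$ (and $s$) by the Riemann/midpoint sum over the particles — controlled by $C_X^{(2)}\varepsilon^d/h^d$, the loss of powers of $h$ coming from differentiating the kernel $\zeta^h$ (each derivative costs $h^{-1}$) when one estimates how well the particle locations sample the smooth integrand against $\zeta^h$; (iii) the \emph{trajectory error}, replacing the exact flow by its $\Delta s$-discretisation in both the particle positions and the carried weights — controlled by $C_X^{(3)}\Delta s/h$, one power of $h^{-1}$ again from the Lipschitz constant of $\zeta^h$ composed with the $O(\Delta s)$ (or higher, depending on the ODE integrator) trajectory error. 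The condition $C_X^{(0)}\Delta s \le h$ ensures the perturbed particles still land in the support where the kernel estimates are valid and that neighbouring characteristics do not cross at the discrete level.

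Concretely the steps are: (1) establish the flow map and its regularity from the smoothness + completeness hypotheses, and derive the Duhamel representation of $u$; (2) rewrite $u$ on $X$ as an exact integral $\int \big(\text{source/data terms}\big)\,\zeta^h(x-X(s,\xi_0))\,ds\,d\xi_0$ up to the $O(h^2)$ smoothing error (using that $u = \zeta^h * u + O(h^2)$ and the change of variables $x'\mapsto (s,\xi_0)$); (3) discretise that integral, bounding the quadrature error by $C\,(\varepsilon/h)^{d-1}\cdot(\varepsilon/h)$-type terms plus the $\Delta s/h$ term, using that the integrand composed with $\zeta^h(x-\cdot)$ has derivatives of size $h^{-|\beta|}$ on the $h$-ball; (4) replace exact trajectories by numerical ones and propagate the ODE-scheme error through, Gronwall-style, to the weights and positions, picking up the final $\Delta s/h$; (5) collect the three contributions and take the supremum over $x\in X$. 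The main obstacle I expect is step (3)–(4): carefully tracking the negative powers of $h$ so that the quadrature and trajectory errors come out as $\varepsilon^d/h^d$ and $\Delta s/h$ rather than something worse, and in particular justifying that, under $C_X^{(0)}\Delta s\le h$, the discrete particle cloud still resolves the support of $\zeta^h$ finely enough — i.e. controlling the interplay between the three small parameters $h,\varepsilon,\Delta s$ uniformly in $x\in X$ — is the delicate bookkeeping at the heart of the estimate.
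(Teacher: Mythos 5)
Your proposal follows essentially the same route as the paper: the Duhamel representation along characteristics, the mollification $u \approx u \ast \zeta^h$ with the change of variables via the flow map, and the three-way error split (kernel consistency $O(h^2)$, Sobolev-type quadrature $O(\varepsilon^d/h^d)$ with each kernel derivative costing $h^{-1}$, and the Euler/Gronwall trajectory error combined with the Lipschitz constant of $\zeta^h$ and a particle-counting bound of order $h^d/(\Delta s\,\varepsilon^{d-1})$ on the kernel support, yielding $O(\Delta s/h)$ under $C_X^{(0)}\Delta s \le h$). The paper merely refines your third piece into two sub-terms (error in the transported weights $\rho_k$ versus error in the kernel evaluation at perturbed positions), but the mechanisms you identify are exactly those used.
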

\noindent The first term in the right hand side of \eqref{eq_error_scheme_intro} is an interpolation error, stemming from the approximation of $u$ by $u\ast \zeta^h$ with $\zeta^h$ a kernel function approximating the Dirac mass. The function $u\ast\zeta^h$ is written in a integral form involving the characteristic curves and the values of the solution along these curves. The second term is a quadrature error, caused by the discretization of the former integral along characteristics. The last term is the discretization error ``in time'', associated to the time discretization by an Euler scheme of the ordinary differential equations for the characteristic curves and the values of the solution $u$ along these curves.

\medskip
This paper is organized as follow. First, in section~\ref{sec_formula_u}, we introduce a representation formula for the solution $u$ of equation~\eqref{eq_system}-\eqref{boundary} along the characteristic curves. Then, in section~\ref{sec_discretization}, we introduce our numerical method and prove a convergence result on the scheme under suitable assumptions. This result is completed in section ~\ref{sec_test_num} by numerical convergence tests for various test cases. Finally, we apply our numerical scheme to carry out numerical simulations of a landscape evolution model \eqref{eq_landscape_model}. Here the landscape is a tilted plane eroded by a fluid.

\section{A representation formula for the solution using characteristics} \label{sec_formula_u}

In this section, we provide an integral representation of the solution $u$ of \eqref{eq_system}-\eqref{boundary} along the characteristic curves. We shall use this representation of the solution $u$ to build our numerical scheme. We suppose that the boundary $\{0\}\times\RR^{d-1}$ is non characteristic and that ${\bf a}$ is bounded on $\{0\}\times\RR^{d-1}$: as a consequence, there exist two positive constants $\alpha$ and $\beta$ such that
\begin{align}
    & \forall \xi \in \RR^{d-1} , \quad 0 < \alpha \leq { \bf a}_1(0,\xi), \quad |{ \bf a}(0,\xi)| \leq \beta. \label{eq_a_incoming}
\end{align}
The first assumption is consistent with the original problem \cite{binard2024well} where water is injected in the domain from the boundary $\{0\}\times\RR^{d-1}$ of $\RR^d_+$. We further assume that the function ${\bf a} \in C^1(U)$. The Cauchy-Lipschitz theorem applies, and we can define the characteristic curves $x_\xi$ as the maximal solutions of 
\begin{equation}
    x_\xi'(s) = {\bf a}(x_\xi(s)), \text{ with} \quad x_\xi(0) = (0,\xi) \in \{0\} \times \RR^{d-1}.
    \label{vector_field}
\end{equation}
The characteristics $x_\xi$ are defined on the interval $(T_{\min}(\xi),  T_{\max}(\xi))$ with $T_{\min}(\xi)<0<T_{\max}(\xi)$ for all $\xi\in\RR^{d-1}$.
We associate to the characteristic curves $x_\xi, \xi\in\RR^{d-1}$ the flow function $\Phi$ defined on  
$$\displaystyle\Omega=\{ (s,\xi) \, ; \xi \in \RR^{d-1} , \, s \in (T_{\min}(\xi),T_{\max}(\xi)) \, \}$$
as
\begin{equation}\label{def_flow}
\begin{array}{ll}
\displaystyle
\Phi : \Omega & \to \RR^d,\\
(s,\xi) & \mapsto \Phi(s,\xi) = x_\xi(s).
\end{array}
\end{equation}

\begin{figure}[ht]
    \center
     \begin{tikzpicture}[scale = 1]
     \node (0,0) {\includegraphics[width=0.8\textwidth]{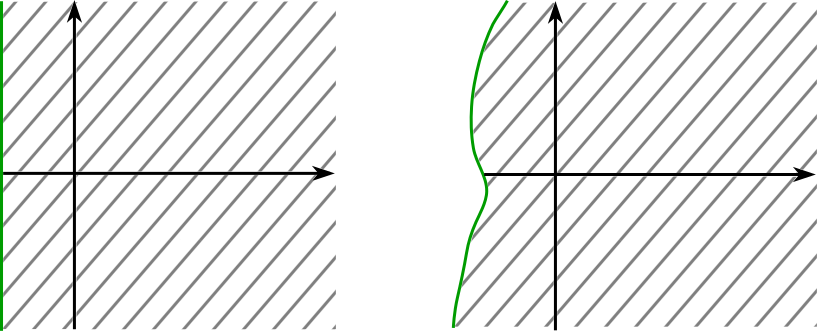}};

     \draw[black] (-5.5,3) node {$\xi$};
     \draw[black] (-1.05,-0.25) node {$s$};
     \draw[ForestGreen] (-6.85,3) node {$s=-\ell$};
     
     \draw[black] (2.4,3) node {$\xi$};
     \draw[black] (6.9,-0.25) node {$s$};
     
     \draw[red] (0,0) node {\Large $\longrightarrow$};
     \draw[red] (0,-0.3) node {\Large $\Phi$};
     \end{tikzpicture}
     
     \caption{Domains $\Omega_\ell\subset\Omega$ and $\Phi(\Omega_\ell) = \widetilde{U}_\ell\subset U$.}
     \label{fig_flow_phi}
\end{figure}

\noindent
We then have the following representation formula for the solution $u$ of System \eqref{eq_system}-\eqref{boundary} along the characteristics.

\begin{lemma}\label{def-u-char}
Assume ${\bf a} \in C^2(U)$, $\, a_0, \, S \in C^1(U)$; then, the solution $u\in C^1(\Phi(\Omega))$ of \eqref{eq_system}-\eqref{boundary} satisfies
\begin{align}
        \forall (s,\xi)\in \Omega, \quad &u(\Phi(s,\xi)) = \left[ g(\xi) + 
        \int_0^s S(\Phi(t,\xi)) \exp \left(I(t,\xi) \right) dt \right] \exp \left(- I(s,\xi) \right), \label{eq_expr_sol_u}\\
        &\text{where} \quad I(s,\xi) = \int_0^s \big( \dive \left({ \bf a }(\Phi(\tau,\xi)) \right) + a_0(\Phi(\tau,\xi)) \big) d\tau. \nonumber
    \end{align}
\end{lemma}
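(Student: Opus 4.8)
The plan is to reduce the PDE, along each characteristic curve, to a scalar linear first-order ODE in the variable $s$, and then to solve that ODE explicitly by the integrating-factor method; the asserted formula \eqref{eq_expr_sol_u} is exactly the resulting Duhamel expression.

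First I would rewrite \eqref{eq_system} in non-conservative form. Since ${\bf a}\in C^2(U)$ and $u\in C^1(\Phi(\Omega))$, we may expand $\dive({\bf a}u) = {\bf a}\cdot\nabla u + u\,\dive({\bf a})$ on $\Phi(\Omega)$, so that \eqref{eq_system} reads ${\bf a}\cdot\nabla u = S - \big(\dive({\bf a}) + a_0\big)u$ there. Next, fix $\xi\in\RR^{d-1}$ and set $w(s) := u(\Phi(s,\xi))$ for $s\in(T_{\min}(\xi),T_{\max}(\xi))$. Because $\Phi(\cdot,\xi)=x_\xi$ solves \eqref{vector_field} it is $C^1$ in $s$, and since $u$ is $C^1$, the chain rule gives $w'(s) = \nabla u(\Phi(s,\xi))\cdot x_\xi'(s) = {\bf a}(\Phi(s,\xi))\cdot\nabla u(\Phi(s,\xi))$. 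Substituting the non-conservative identity evaluated at $x=\Phi(s,\xi)$ yields the linear ODE
\[
 w'(s) + b(s)\,w(s) = f(s), \qquad w(0) = g(\xi),
\]
where $b(s) := \dive({\bf a})(\Phi(s,\xi)) + a_0(\Phi(s,\xi))$ and $f(s) := S(\Phi(s,\xi))$; the initial value comes from $\Phi(0,\xi)=(0,\xi)$ together with \eqref{boundary}. Here $b$ and $f$ are continuous on $(T_{\min}(\xi),T_{\max}(\xi))$ since ${\bf a}\in C^2(U)$, $a_0,S\in C^1(U)$, and $\Phi(\cdot,\xi)$ is continuous with range in $U$.

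Finally, with the integrating factor $\exp(I(s,\xi))$ for $I(s,\xi)=\int_0^s b(\tau)\,d\tau$, one has $\tfrac{d}{ds}\big(w(s)\exp(I(s,\xi))\big) = f(s)\exp(I(s,\xi))$; integrating from $0$ to $s$, using $I(0,\xi)=0$ and $w(0)=g(\xi)$, and multiplying through by $\exp(-I(s,\xi))$ gives precisely \eqref{eq_expr_sol_u}. I do not expect a genuine obstacle: this is the classical method of characteristics, and the statement already presupposes the existence of a $C^1$ solution $u$, so the argument is a derivation of its form rather than an existence proof. The only points deserving care are the regularity bookkeeping — namely that $\Phi(\Omega)\subset U$ so that every composition $\dive({\bf a})\circ\Phi$, $a_0\circ\Phi$, $S\circ\Phi$, $\nabla u\circ\Phi$ is well defined and continuous, which is guaranteed by the construction of $\Omega$ and the stated smoothness hypotheses — and the legitimacy of differentiating $s\mapsto u(\Phi(s,\xi))$, which follows from $u\in C^1(\Phi(\Omega))$ and $\Phi(\cdot,\xi)\in C^1$.
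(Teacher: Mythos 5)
Your proof is correct and follows exactly the route the paper intends: the same chain-rule computation along characteristics (reducing the divergence-form equation to the linear ODE $w'+\big(\dive{\bf a}+a_0\big)\circ\Phi\,w=S\circ\Phi$) appears verbatim in the paper's proof of Proposition~\ref{prop_syst_u_k(s)}, equation~\eqref{syst_part3bis}, and the integrating-factor formula is precisely \eqref{eq_expr_sol_u}. Nothing is missing.
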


\noindent 
We want to solve the system \eqref{eq_system}-\eqref{boundary} on the domain $\RR^d_+$. The assumption \eqref{eq_a_incoming} on ${\bf a_1}$ being satisfied, this domain is positively invariant under the flow of the differential system \eqref{vector_field}: 
$$
\displaystyle
\Phi(s,\xi)\in\RR^d_+,\quad \forall s\geq 0,\quad \forall \xi\in\RR^{d-1}.
$$

\noindent
However, this does not ensure that we can define the solution $u$ on the whole domain $\RR^d_+$ and we have to make further assumptions on the flow $\Phi$: 
\medskip
\begin{assumption}~
    \begin{enumerate}[(i)]
        \item There exists $\ell>0$ such that the flow $\Phi$ is defined on a domain $\Omega_\ell=(-\ell, +\infty)\times\RR^{d-1}\subset\Omega$ and $\Phi: \Omega_\ell \to \Phi(\Omega_\ell) := \widetilde{U}_\ell \subset U$ is a $C^1$-diffeomorphism (see figure \ref{fig_flow_phi}):
        $$\displaystyle \forall x \in \widetilde{U}_\ell,\quad \exists (s,\xi)\in(-\ell,+\infty)\times\RR^{d-1},\quad x=x_\xi(s).$$
        %belongs to a trajectory $x_\xi(s)$ where $\xi \in \RR^{d-1}$ and $s>-\ell$. This is illustrated in Figure\ref{fig_flow_phi}.
        \item There exists $\delta>0$ such that $U_\delta = (-\delta,+\infty) \times \RR^{d-1}\subset \widetilde{U}_\ell\subset U$. This is illustrated in Figure~\ref{fig_caract_curves}. 
    \end{enumerate}
    \label{assump_flow}
\end{assumption}
These assumptions ensure that we can define the solution $u$ on $\RR^d_+$. However, note that we have required a stronger assumption so as to define the solution $u$ on the domain $U_\delta$. Indeed, in the interpolation procedure, we approximate $u$ by $u\ast\zeta^h$ and we need that the solution $u$ to be defined on a domain $U_h$ to compute $u\ast\zeta^h$ on $\RR^d_+$. Here, we can carry out these computations by assuming $0<h<\delta$. 

\medskip
Note also that our setting does not deal with the problem of ``dry areas'' when the characteristics curves starting from the boundary $\{0\}\times\RR^{d-1}$ do not cover entirely the domain $\RR^{d-1}_+$. We will discuss in the numerical part some strategies to extend our framework.\\ 
%because of the interpolation part in the Particle_method. Actually, quantities at a point of the domain $\RR^d_+$ are approached with an interpolation formula, computed on a small ball containing the point.
%Under Assumption~\ref{assump_flow}, the solution $u$ of \eqref{eq_system}, \eqref{boundary} is defined on the domain $U_\delta$ and Lemma~\ref{def-u-char} provides an explicit formula for $u$. 
%The hypotheses made in this assumption could be less restrictive, but they enable to simplify the proof of convergence of the particle scheme. Sect.~\ref{sec_discussion} provides insight into the possibility of choosing more general domains.
\begin{figure}[ht]
    \center
     \begin{tikzpicture}[scale = 1]
     \node (0,0) {\includegraphics[width=0.5\textwidth]{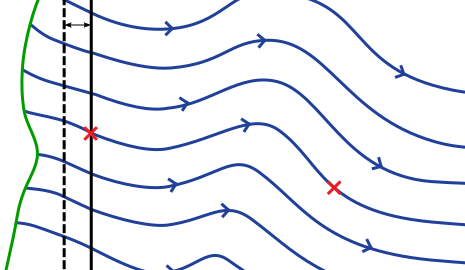}};
     \draw[red] (-2.2,0.2) node {$\xi$};
     \draw[red] (3.1,-1.1) node {$x=\Phi(s,\xi)$};
     
     \draw[black] (-1.8,2.9) node {$\{0\} \times \RR^{d-1}$};
     \draw[black] (-2.75,1.75) node {$\delta$};
     
     \draw[ForestGreen] (-3.7,2.9) node {$s=-\ell$};
     
     \draw[white] (5,0) node { };
     \end{tikzpicture}
     
     \caption{Characteristic curves on the domain $\widetilde {U}_\ell$ and definition of $U_\delta$.}
     \label{fig_caract_curves}
\end{figure}

%To apply the Particle_method on $\RR^d_+$, the function $\Phi$ has to take values in $\RR^d_+$. In addition, the value of the approximation at a point $x$ close to the boundary $\{0\} \times \RR^{d-1}$ is computed in a small circle containing $x$. Therefore, the function $\Phi$ has to be defined on an open set $(-\ell,+\infty) \times \RR^{d-1} \supset \RR^d_+$.
%Consequently, we suppose that there exists $\ell >0$ such that:
%\begin{align*}
 %   \forall \xi \in \RR^{d-1}, \quad T_{\min}(\xi) \leq l, \quad T_{\max}(\xi) = + \infty.
%\end{align*}
%And we suppose that:
%$$\Phi : D := (-\ell,+\infty) \times \RR^{d-1} \to U,$$
%that is the image of $\Phi$ by $D$ is contained in the open set $U$.
%The Cauchy Lipschitz theorem, see for example Chapter $3$ of \cite{sideris2013ordinary}, gives the following result:
%\begin{remark}
%    The second assumption on the flow $\Phi$ postulates that each point $x \in (-\delta,+\infty) \times \RR^{d-1}$ has an inverse by the flow $\Phi$. 
    %We prove in Appendix \ref{appendix_well_posed} that $(0,+\infty) \times \RR^{d-1} \subset \Phi(U)$ is true in dimension $2$ provided that ${\bf a}(x)\neq 0, \forall x\in U$ as a consequence of the Poincaré-Bendixson theorem.
%\end{remark}

For the reader's convenience, we list  the various domains considered in the paper:
\begin{itemize}
    \item $U$ is the domain of definition of ${\bf a}$.
    \item $\Omega$ is the domain of definition of $\Phi$, and $\Phi(\Omega) \subset U$.
    \item $\Omega_\ell = (-\ell,+\infty) \times \RR^{d-1} \subset \Omega$.
    \item $\widetilde{U}_\ell = \Phi(\Omega_\ell) \subset U$.
    \item $U_\delta = (-\delta,+\infty) \times \RR^{d-1} \subset \widetilde{U}_\ell$.
\end{itemize}

Under suitable assumptions on the vector field ${\bf a}$, the flow $\Phi$ satisfies the assumptions formulated above. 
%This shows that the set of functions $\Phi$ which satisfy Assumption~\ref{assump_flow} is not empty. 
The following proposition is proved in Appendix~\ref{appendix_proof_prop_assump}.

\begin{proposition}
   Let $L>0$, and assume that $U \supset U_L = (-L,+\infty) \times \RR^{d-1}$. Suppose that the function ${\bf a}$ is globally Lipschitz continuous on $U$ and satisfies \eqref{eq_a_incoming}. Then, 
   \begin{enumerate}[(i)]
       \item For all $\xi \in \RR^{d-1}$, equation~\eqref{vector_field} admits a unique maximal solution $x_\xi$. This solution is such that: $T_{\min}(\xi) < 0$, $T_{\max}(\xi) = +\infty$. The solution $x_\xi$ is $C^1((T_{\min}(\xi),+\infty))$ and the flow $\Phi$ is $C^1(\Omega)$. 
       \item For all $\xi \neq \eta \in \RR^{d-1}$, the curves $s \mapsto x_{\xi}(s)$ and $s \mapsto x_{\eta}(s)$  do not intersect each other.
       \item There exists $\ell >0$ such that $\Phi$ verifies the first statement of assumption~\ref{assump_flow}.
       \item Assume that ${\bf a}_1(x) \geq \alpha >0,\:\forall x \in U$. Then $\Phi$ verifies the second statement of assumption \ref{assump_flow}.
   \end{enumerate}
\label{prop_ex_flow}
\end{proposition}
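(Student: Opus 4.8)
\emph{Overview of the approach.} The plan is to prove the four claims in order, each building on the previous one, using only classical ODE tools: the maximal-solution extension criterion together with a linear growth bound, a Nagumo-type invariance argument for the half-space, Liouville's formula for the Jacobian of a flow, and the inverse function theorem.

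\emph{Items (i)--(ii).} Global Lipschitz continuity of ${\bf a}$ (with constant $\Lambda$) forces at most linear growth, so Cauchy--Lipschitz (applicable since ${\bf a}\in C^1(U)$ is in force throughout the section) gives, for every $\xi$, a unique maximal $C^1$ solution $x_\xi$ of \eqref{vector_field} with $C^1$ dependence on $\xi$, hence $\Phi\in C^1(\Omega)$. To obtain $T_{\max}(\xi)=+\infty$ I would show $\overline{\RR^d_+}$ is forward-invariant: if $x_\xi(s_0)_1=0$ for some $s_0>0$, pick the smallest such $s_0$; then $x_\xi(\cdot)_1>0$ on $(0,s_0)$ while $\frac{d}{ds}x_\xi(s_0)_1={\bf a}_1(x_\xi(s_0))\ge\alpha>0$ by \eqref{eq_a_incoming}, which forces $x_\xi(\cdot)_1<0$ immediately before $s_0$, a contradiction. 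So $x_\xi(s)\in\overline{\RR^d_+}\subset U$ for $s\ge 0$, and since the trajectory is bounded on bounded $s$-intervals (linear growth $+$ Grönwall), the standard extension criterion gives $T_{\max}(\xi)=+\infty$. The same argument run backward gives $x_\xi(\cdot)_1<0$ on $(T_{\min}(\xi),0)$, and $T_{\min}(\xi)<0$ is immediate from local solvability. For (ii): if $x_\xi(s_1)=x_\eta(s_2)$, uniqueness for the autonomous field gives $x_\xi(\cdot)=x_\eta(\cdot+s_2-s_1)$; evaluating at $0$ puts $(0,\xi)$ on the $\eta$-trajectory, which (by the two-sided sign analysis just obtained) meets $\{x_1=0\}$ only at parameter $0$; hence $s_1=s_2$ and $\xi=\eta$.

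\emph{Item (iii).} I would establish that $\Phi$ is a global $C^1$-diffeomorphism onto its (open) image by checking injectivity, invertibility of $D\Phi$, and $\Omega_\ell\subset\Omega$ for a suitable $\ell>0$. Injectivity on $\Omega$: distinct $\xi$ give disjoint trajectories by (ii); for fixed $\xi$, a coincidence $x_\xi(s_1)=x_\xi(s_2)$ with $s_1\ne s_2$ would make $x_\xi$ periodic, contradicting $x_\xi(0)_1=0$ and $x_\xi(s)_1>0$ for all $s>0$. Invertibility: the columns $\partial_s\Phi={\bf a}(\Phi)$ and $\partial_{\xi_j}\Phi$ all satisfy the variational equation $\partial_s V=D{\bf a}(\Phi)V$, so Liouville's formula yields $\det D\Phi(s,\xi)=\det D\Phi(0,\xi)\exp\left(\int_0^s\dive\left({\bf a}(\Phi(\tau,\xi))\right)d\tau\right)$; since $\Phi(0,\cdot)$ is the embedding $\xi\mapsto(0,\xi)$, the matrix $D\Phi(0,\xi)$ is block lower-triangular with blocks ${\bf a}_1(0,\xi)$ and $I_{d-1}$, so $\det D\Phi(0,\xi)={\bf a}_1(0,\xi)\ge\alpha>0$, whence $\det D\Phi>0$ everywhere on $\Omega$. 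For $\Omega_\ell\subset\Omega$: running Grönwall backward from $(0,\xi)$ with $|{\bf a}(y)|\le|{\bf a}(0,\xi)|+\Lambda|y-(0,\xi)|\le\beta+\Lambda|y-(0,\xi)|$ (using $|{\bf a}(0,\xi)|\le\beta$ from \eqref{eq_a_incoming}) gives $|x_\xi(-t)-(0,\xi)|\le\frac{\beta}{\Lambda}\left(e^{\Lambda t}-1\right)$ \emph{uniformly in $\xi$}, so choosing $\ell$ with $\frac{\beta}{\Lambda}\left(e^{\Lambda\ell}-1\right)<L$ keeps $x_\xi(-t)_1>-L$, i.e. $x_\xi(-t)\in U_L\subset U$, on $[0,\ell]$; hence $T_{\min}(\xi)\le-\ell$ for all $\xi$. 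Combining, $\Phi|_{\Omega_\ell}$ is injective and $C^1$ with invertible differential, hence a $C^1$-diffeomorphism onto the open set $\widetilde U_\ell:=\Phi(\Omega_\ell)$.

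\emph{Item (iv).} Now assume ${\bf a}_1\ge\alpha>0$ on all of $U$, so along every trajectory $\frac{d}{ds}x_1\ge\alpha$. Fix $\delta$ with $0<\delta<\min(\alpha\ell,L)$ and take $x=(x_1,\xi')\in U_\delta$. Flowing from $x$, the first coordinate is strictly monotone, so it reaches $\{x_1=0\}$ at a unique time $s^\ast$ with $|s^\ast|\le|x_1|/\alpha$; on the segment joining $x$ to that hyperplane the first coordinate stays between $x_1>-L$ and $0$, so the trajectory remains in $U_L\subset U$ (and does not blow up, the time interval being finite). If $x_1\ge0$ then $s^\ast\le0<\ell$; if $-\delta<x_1<0$ then $0<s^\ast<\delta/\alpha<\ell$. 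Writing the crossing point as $(0,\zeta)$ and re-centering time, $x=x_\zeta(-s^\ast)$ with $-s^\ast>-\ell$, i.e. $(-s^\ast,\zeta)\in\Omega_\ell$, so $x\in\widetilde U_\ell$. Hence $U_\delta\subset\widetilde U_\ell$, which is the second statement of Assumption~\ref{assump_flow}.

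\emph{Main obstacle.} Everything except one point is routine. The delicate step is item (iii)'s requirement of a \emph{single} $\ell>0$ valid for all $\xi\in\RR^{d-1}$: a crude Grönwall estimate of the backward exit time picks up a factor growing with $|\xi|$ and degenerates as $|\xi|\to\infty$, so it is essential to exploit the uniform boundedness $|{\bf a}(0,\xi)|\le\beta$ from \eqref{eq_a_incoming}, not merely global Lipschitz continuity. The invariance, Liouville, and diffeomorphism arguments are standard.
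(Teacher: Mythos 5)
Your proposal is correct and follows essentially the same route as the paper's proof in Appendix~A: Cauchy--Lipschitz plus forward invariance of $\overline{\RR^d_+}$ via the non-characteristic boundary condition for (i)--(ii), the Gr\"onwall estimate $|\Phi_1(t,\xi)|\leq\frac{\beta}{P}(e^{P|t|}-1)$ exploiting the \emph{uniform} bound $|{\bf a}(0,\xi)|\leq\beta$ to get a single $\ell$ for (iii), and flowing backward/forward to the hyperplane $\{x_1=0\}$ under ${\bf a}_1\geq\alpha$ for (iv). The only differences are cosmetic: you make the diffeomorphism verification (injectivity, Liouville, inverse function theorem) explicit where the paper defers the $\det J_\Phi$ computation to Section~3.1, and you treat all of $U_\delta$ in one step in (iv) where the paper splits off a near-boundary strip first; you also correctly identify the uniformity of $\ell$ as the one non-routine point.
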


We also consider the smoothness of the solution $u$. In order to derive error estimates for our numerical scheme, we need additional regularity on the solution $u$ which is obtained by assuming more regularity on ${\bf a}$, $a_0, \, S$ and $g$.

\begin{proposition}
    Let $m \in \NN$, and suppose that 
    ${\bf a}\in C^{m+1}(U)$, $g \in C^{m}(\RR^{d-1})$ whereas $a_0,~ S \in C^{m}(U)$. Then, one has: 
    \begin{enumerate}[(i)]
        \item $\Phi: \Omega_\ell \to \widetilde {U}_\ell\subset U$ is a $C^{m+1}$ diffeomorphism.
        \item The system \eqref{eq_system}-\eqref{boundary} is well-posed on $\widetilde{U}_\ell$ and 
        the solution $u$ belongs to $C^{m}(\widetilde{U}_\ell)$.
    \end{enumerate}
%    Then System \eqref{eq_system}-\eqref{boundary} is well-posed on $\tilde U$, the solution $u$ belongs to $C^{m+1}(\tilde U)$, and is given by
%    \begin{align}
%        \forall x \in \tilde U, \quad u(x) =& \left[ g(\xi) + 
%        \int_0^t S(\Phi(s,\xi)) \exp \left( \int_0^s \big( \dive \left({ \bf a }(\Phi(\tau,\xi)) \right) + a_0(\Phi(\tau,\xi)) \big) d\tau \right) ds \right] \nonumber \\
%        & \exp \left(- \int_0^t \big( \dive \left({ \bf a }(\Phi(s,\xi)) \right) + a_0(\Phi(s, \xi)) \big) ds \right),
%        \label{eq_expr_sol_u}
 %   \end{align}
 %   where $\xi \in \RR^{d-1}$ and $t > -l$ %verify $(t,\xi) = \Phi^{-1}(x)$.
    
    \label{prop_caract_domain}
\end{proposition}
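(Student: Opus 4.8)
The plan is to bootstrap the regularity already available from Proposition~\ref{prop_ex_flow} and Lemma~\ref{def-u-char}; nothing genuinely new is needed beyond the $C^k$ version of the smooth-dependence theorem for ODEs, the inverse function theorem, and careful chain-rule bookkeeping.

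\emph{Part (i).} First I would invoke the classical theorem on smooth dependence of ODE solutions on their initial data: since ${\bf a}\in C^{m+1}(U)$, the flow of the autonomous system \eqref{vector_field} is of class $C^{m+1}$ jointly in $(s,x_0)$, hence so is the map $\Phi(s,\xi)$ obtained by composing it with the smooth embedding $(s,\xi)\mapsto(s,(0,\xi))$. By Proposition~\ref{prop_ex_flow} together with Assumption~\ref{assump_flow}, $\Phi$ is already a $C^1$-diffeomorphism from $\Omega_\ell$ onto $\widetilde U_\ell$; in particular $D\Phi(s,\xi)$ is invertible at every point of $\Omega_\ell$. Applying the inverse function theorem locally and patching over $\widetilde U_\ell$ then shows $\Phi^{-1}\in C^{m+1}(\widetilde U_\ell)$, so $\Phi$ is a $C^{m+1}$-diffeomorphism.

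\emph{Part (ii).} I would \emph{define} $u$ on $\widetilde U_\ell$ by the representation formula \eqref{eq_expr_sol_u}, i.e.\ for $x\in\widetilde U_\ell$ set $(s,\xi)=\Phi^{-1}(x)$ and let $u(x)$ be the right-hand side of \eqref{eq_expr_sol_u}. That this is a solution of \eqref{eq_system}--\eqref{boundary} is a direct computation: along a characteristic $\frac{d}{ds}(u\circ\Phi)=(\nabla u\cdot{\bf a})\circ\Phi$, so \eqref{eq_system} is equivalent to the linear scalar ODE $\frac{d}{ds}(u\circ\Phi)+\big(\dive({\bf a})+a_0\big)\circ\Phi\;(u\circ\Phi)=S\circ\Phi$ with initial value $g(\xi)$ at $s=0$, whose variation-of-constants solution is precisely \eqref{eq_expr_sol_u}; since $\Phi$ is onto $\widetilde U_\ell$, this is a genuine solution on all of $\widetilde U_\ell$. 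For the regularity I would track $C^m$-smoothness through the formula: $\Phi,\Phi^{-1}\in C^{m+1}$ by (i); ${\bf a}\in C^{m+1}$ gives $\dive({\bf a})\in C^m(U)$, hence $S\circ\Phi$, $a_0\circ\Phi$ and $(\dive({\bf a}))\circ\Phi$ lie in $C^m(\Omega_\ell)$; the operator $f\mapsto\big((s,\xi)\mapsto\int_0^s f(\tau,\xi)\,d\tau\big)$ maps $C^m$ to $C^m$ (differentiate under the integral sign in $\xi$, while the $\partial_s$-derivative merely evaluates the integrand), so $I$ and the inner integral of \eqref{eq_expr_sol_u} are $C^m$ in $(s,\xi)$; composition with $\exp$ and pointwise products preserve $C^m$; and $g$ composed with the ($C^{m+1}$) transverse component of $\Phi^{-1}$ is $C^m$. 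Thus $u\circ\Phi\in C^m(\Omega_\ell)$ and therefore $u=(u\circ\Phi)\circ\Phi^{-1}\in C^m(\widetilde U_\ell)$. Uniqueness — and hence well-posedness — then follows from Lemma~\ref{def-u-char}, which asserts that every $C^1$ solution must coincide with the right-hand side of \eqref{eq_expr_sol_u}. (For $m=0$ the equation is to be read in the distributional sense, resp.\ along characteristics, and the same formula supplies the unique such solution.)

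I expect the one delicate point to be the regularity bookkeeping in Part (ii): verifying that integration against the moving upper limit $s$ costs no derivative jointly in $(s,\xi)$, and recognising that the hypothesis ${\bf a}\in C^{m+1}$ — one derivative more than $a_0,S,g$ — is exactly what makes $\dive({\bf a})$, and thus the integrand of $I$, of class $C^m$. The remaining ingredients (smooth dependence of the flow on initial data, the inverse function theorem, and the chain rule for compositions and products of $C^m$ functions) are entirely standard.
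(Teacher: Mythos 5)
Your proof is correct and follows exactly the route the paper intends: for (i) the paper simply cites Dieudonn\'e (chap.\ V) for the $C^{m+1}$ regularity of the flow, which is your smooth-dependence-plus-inverse-function-theorem argument, and for (ii) the paper gives no written proof but clearly relies on the representation formula of Lemma~\ref{def-u-char}, which is precisely what you use to define $u$ and track its regularity. Your bookkeeping is sound — in particular you correctly identify that the extra derivative on ${\bf a}$ is consumed by $\dive({\bf a})$ in the exponent $I$, and your caveat about reading the equation along characteristics when $m=0$ (so that $u\in C^1$ is not available) is an honest point the paper glosses over.
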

\noindent See~\cite[chap V]{dieudonne1969elements} for the proof of the first item of Proposition~\ref{prop_caract_domain}.

\section{The discretization method} \label{sec_discretization}

In this section we describe the Particle method and provide a proof of convergence of the numerical scheme. We follow the strategy of the SPH method: the solution is written as a sum of Dirac distributions. The Dirac masses are advected by the flow described by the characteristic equations whereas the values of the solution at the Dirac masses locations are evolved by using the partial differential equation. In order to deal with functions, the Dirac masses are replaced by a compactly supported family of functions $\xi^h, h>0$ (so called kernel functions) approximating the Dirac distribution. In our setting, this program can be described as follow: we first replace the solution $u$ by its regularized version $u\ast\zeta^h$. This later function can be written as an integral over $\tilde U_\ell$ for $h$ sufficiently small. We then use the flow $\Phi$ to perform a change of variable where the functions are evaluated along characteristics and the integral is taken on $\Omega_\ell$. We use a quadrature formula to discretize this integral with a spatial step size $\varepsilon>0$ in the transverse direction $\xi\in\RR^{d-1}$ and $\Delta s>0$ in the direction $s>-\ell$. We obtain a fully discretized scheme by using the characteristic equations to compute an approximation of the particles position and an approximation of the solution at these discretization points.
%The method consists of a first step where some particles are placed at the boundary of the domain. Then we compute the trajectories of the particles, which follow the vector field $a$. These trajectories are discretised in variable $s$ to obtain a space discretisation on the domain $U_\delta$. Finally, the value of $u$ in the domain is computed with a quadrature formula, based on the previous space discretization. \medskip

In section~\ref{subsection_1}, we introduce the  kernel function $\zeta^h$, defined as an approximation of the Dirac delta distribution, and the change of variables formula in integrals defined on $\Phi(\Omega_\ell)$ associated to the characteristic flow $\Phi$. This change of variable is used to write the integral defining $u\ast\zeta^h$ along the characteristic curves. In section~\ref{subsection_2}, we present the quadrature formula, that consists of a semi-discretization of the former integral in the direction $\xi\in\RR^{d-1}$. In section~\ref{subsection_3}, we introduce the discretization of the characteristic curves which in turn introduces our fully discrete scheme. Finally, the convergence theorem is proved in section~\ref{subsection_4}.

\subsection{The smoothed particle formalism} \label{subsection_1}

%In this part, we introduce the first ingredient of the particle scheme, the approximation of the solution $u$ by a quadrature formula. This technique is similar to the method of smoothed particle hydrodynamics for evolution problems. 
In what follows, we assume that $x \in \RR^d_+ := (0,+\infty) \times \RR^{d-1}$ and we want to approximate $u$ by a smoother function $u\ast\zeta^h$ on this domain. For that purpose, we define a kernel function $\zeta^h$ , an approximation of the Dirac delta distribution:
\begin{equation}
    \forall x \in \RR^{d}, \quad \zeta^h(x) = \frac{1}{h^{d}} \zeta \left(\frac{|x|}{h} \right)
    \label{def_zetah}
\end{equation}
with $\zeta : [0,+\infty) \to \RR^+$ a function compactly supported in $[0,1]$ which verifies
\begin{equation}
	\int_{\RR^d} \zeta \left(|x| \right) dx = \text{meas} (\mathbb{S}^{d-1}) \int_0^1 \zeta(t) t^{d-1} dt = 1,
	\label{def_zeta}
\end{equation}
where $\text{meas} (\mathbb{S}^{d-1})$ is the $d-1$ dimensional measure of the $d-1$ dimensional sphere.
The support of $\zeta^h$ is included into $\overline{B}_d(0,h)$ where, for all $d \in \NN$, $x\in\RR^{d}$ and $h>0$, the sets $B_d(x,h),\overline{B}_d(x,h)$ are given by
\begin{align}
    &B_d(x,h) := \{y \in \RR^d \, ; \; |x-y| < h \},\\
    &\overline{B}_d(x,h) := \{y \in \RR^d \, ; \; |x-y| \leq h \}.
    \label{def_ball}
\end{align}

%We suppose that the distance between the boundaries of $\RR^d_+$ and $ \tilde U$ is greater than $\delta$:
%\begin{align}
 %   \forall x \in \RR^d_+, \; dist(x, \partial U) \geq \delta.
 %   \label{eq_hyp_dist_U}
%\end{align}
\noindent Then, for any $f \in C^m(U)$ and $x \in \RR^{d}_+$, the function $y \mapsto f(y) \zeta^h(x-y)$ is compactly supported in $U_\delta$ provided that $0<h<\delta$. This function can be extended by $0$ to the domain $\widetilde{U}_\ell$ and we get the approximation formula:
\begin{align}
    \displaystyle f(x) \approx f\ast\zeta^h(x)= \int_{U_\delta} f(y) \zeta^h(x-y) dy = \int_{\widetilde{U}_\ell} f(y) \zeta^h(x-y) dy.
    \label{eq_approx_f(x)}
\end{align}
%Next we use the fact that the solution $u$ is known on the characteristic curves of the vector field ${\bf a}$ and 

Next, we use the fact that the flow $\Phi$ is a $C^1$ diffeomorphism so that we can perform a change of variables in the formula \eqref{eq_approx_f(x)}. Let us denote by $J_\Phi$ the Jacobian matrix of $\Phi$, given by:
 \begin{equation}
     \displaystyle J_\Phi(s,\xi) = \left[ \begin{array}{cccc}
        \displaystyle \frac{\partial \Phi_1}{\partial s} & \displaystyle \frac{\partial \Phi_1}{\partial \xi_1} & \displaystyle \dots & \displaystyle \frac{\partial \Phi_1}{\partial \xi_{d-1}} \\
        \displaystyle \vdots & \displaystyle \vdots & & \displaystyle \vdots\\
        \displaystyle \frac{\partial \Phi_d}{\partial s} & \displaystyle \frac{\partial \Phi_{d}}{\partial \xi_1} & \displaystyle \dots & \displaystyle \frac{\partial \Phi_d}{\partial \xi_{d-1}}
     \end{array} \right]
     \label{eq_jac_phi}
 \end{equation}
As $\Phi$ is invertible, $\det J_\Phi$ does not vanish. Moreover, from the relation $\Phi(0,\xi)=(0,\xi)$, one deduces that
$$
\displaystyle
\nabla_\xi\Phi_1(0,\xi)=0,\quad \left. \frac{\partial (\Phi_2, \dots, \Phi_d)}{\partial (\xi_1, \dots, \xi_{d-1})} \right|_{(0,\xi)}={\rm Id}_{\RR^{d-1}},\quad\forall\xi\in\RR^{d-1}.
$$
As a result, for all $\xi \in \RR^{d-1}$, one has 
$$
\displaystyle
(\det J_\Phi) (0,\xi) = \partial_s \Phi_1(0,\xi) ={ \bf a}_1(0,\xi) \geq \alpha > 0.
$$ 
%because the Jacobian matrix 
%$$\displaystyle \left. \frac{\partial (\Phi_2, \dots, \Phi_d)}{\partial (\xi_1, \dots, \xi_{d-1})} \right|_{(0,\xi)}$$
%is the identity of $\RR^{d-1}$, given that $\Phi(0,\xi) = (0,\xi)$. 
Therefore, by using the Liouville formula, one deduces that
$$\forall s \geq -\ell, \, \forall \xi \in \RR^{d-1}, \; (\det J_\Phi) (s,\xi)=\exp\left(\int_0^s\dive({\bf a})(\Phi(t,\xi))dt\right)(\det J_\Phi) (0,\xi) > 0.$$
Then, for any $f \in C^0(\widetilde{U}_\ell)$, by performing the change of variables $\Phi:(-\ell,+\infty)\times\RR^{d-1}\to\tilde U_\ell$, we get:
\begin{align}
    \displaystyle \int_{\widetilde{U}_\ell} f(x) dx = \int_{-\ell}^{+ \infty} \int_{\RR^{d-1}} f(\Phi(s,\xi)) (\det J_\Phi)(s,\xi) d\xi ds.
    \label{eq_change_var}
\end{align}
Consequently, an integral on $\widetilde{U}_\ell$ can be converted into an integral on $(-\ell, +\infty)\times\RR^{d-1}$,
provided that one can determine the function along the characteristic curves. It is easier to perform computations on this domain, and to create a mesh on it.\\

We can now apply the approximation \eqref{eq_approx_f(x)} to the solution~$u$ of equation~\eqref{eq_system} together with the change of variables \eqref{eq_change_var} since we have a representation formula for the solution $u$ along characteristic curves. One finds:
%and then the change of variable $t := \frac{\varepsilon}{ds} s$, we have:
\begin{equation}
\displaystyle
    \forall x \in \RR^d_+, 
    \quad u(x) \approx \int_{\widetilde{U}_\ell} u(y) \zeta^h(x-y) dy 
     = \int_{-\ell}^{+ \infty} \int_{\RR^{d-1}} u(\Phi(s,\xi)) \zeta^h(x-\Phi(s,\xi)) (\det J_\Phi) (s,\xi) d\xi ds.
     \label{eq_approx_u_int}
\end{equation}

\noindent We conclude this section with an error estimate on $u - u \ast \zeta^h$.
\begin{lemma}\label{diff-u-uh}
Let $u \in C^2(\widetilde{U}_\ell)$, $\delta>0$ such that $\overline{U_\delta} \subset \widetilde{U}_\ell$, and $x \in \RR^d_+$. Then there exists $C(x,\delta)>0$ such that, for any $0<h<\delta$, one has
\begin{equation}
    \displaystyle
    |u(x) - u \ast \zeta^h(x)| \leq C(x,\delta) h^2.
    \label{eq_lemma_bound_convol}
\end{equation}
\end{lemma}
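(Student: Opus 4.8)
The plan is to prove the estimate via a standard second-order moment argument for mollification, taking advantage of the fact that $\zeta$ is radially symmetric so that the first-order term in the Taylor expansion vanishes. First I would write
\begin{align*}
u(x) - u\ast\zeta^h(x) &= \int_{\RR^d} \big(u(x) - u(x-hz)\big)\,\zeta(|z|)\,dz,
\end{align*}
after the change of variables $y = x - hz$ in \eqref{eq_approx_f(x)} and using $\int \zeta(|z|)\,dz = 1$ from \eqref{def_zeta}. Here the integration is effectively over $|z|\le 1$, and for $0<h<\delta$ the points $x - hz$ all lie in $U_\delta \subset \widetilde{U}_\ell$, where $u\in C^2$, so the expression makes sense.

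Next I would Taylor-expand $u$ around $x$ to second order with integral remainder:
\begin{align*}
u(x-hz) = u(x) - h\,\nabla u(x)\cdot z + h^2 \int_0^1 (1-t)\, z^\top D^2u(x-thz)\, z\, dt.
\end{align*}
Substituting into the integral, the zeroth-order term cancels against $u(x)$, and the first-order term $-h\int \nabla u(x)\cdot z\,\zeta(|z|)\,dz$ vanishes by oddness: $z\mapsto z\,\zeta(|z|)$ is an odd function integrated over a symmetric domain. This leaves only the quadratic remainder, so
\begin{align*}
|u(x) - u\ast\zeta^h(x)| \le h^2 \int_{|z|\le 1} \int_0^1 (1-t)\,\big| z^\top D^2u(x-thz)\,z\big|\,dt\,\zeta(|z|)\,dz.
\end{align*}

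To bound the right-hand side I would set $K_{x,\delta} := \overline{B}_d(x,\delta) \subset \widetilde{U}_\ell$ (a compact set, using $\overline{U_\delta}\subset\widetilde U_\ell$ so one can choose $\delta$ slightly smaller if needed to get compactness; or simply note $x - thz \in \overline{B}_d(x,h)\subset \overline{B}_d(x,\delta)$) and let $M = \sup_{K_{x,\delta}} \|D^2 u\|$, finite since $u\in C^2(\widetilde U_\ell)$. Then $|z^\top D^2u(x-thz) z| \le M|z|^2 \le M$ on the domain of integration, and
\begin{align*}
|u(x) - u\ast\zeta^h(x)| \le h^2\, M \int_{|z|\le 1}\zeta(|z|)\,dz \int_0^1 (1-t)\,dt = \tfrac{1}{2} M\, h^2,
\end{align*}
so one takes $C(x,\delta) = \tfrac12 \sup_{\overline{B}_d(x,\delta)}\|D^2u\|$. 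The only mildly delicate point — and the main thing to get right rather than a genuine obstacle — is justifying that $x - hz \in \widetilde U_\ell$ throughout, i.e. invoking Assumption \ref{assump_flow}(ii) together with $0<h<\delta$ to keep the mollification well-defined; everything else is a routine Taylor estimate. One should also remark that the constant depends on $x$ only through the local sup-norm of $D^2 u$, so it is locally bounded and in particular uniform on compact subsets of $\RR^d_+$, which is what is needed later for Theorem \ref{th_cv_scheme_intro}.
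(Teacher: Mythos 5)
Your proposal is correct and follows essentially the same route as the paper's proof: a second-order Taylor expansion of $u$ about $x$, cancellation of the first-order term by the radial symmetry of $\zeta^h$, and a bound on the remainder by the sup-norm of the Hessian on $\overline{B}_d(x,\delta)$, yielding the same form of constant $C(x,\delta)$. The only differences are cosmetic (the change of variables $y=x-hz$ and the integral form of the remainder instead of a Lagrange-type bound), so no further comparison is needed.
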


\begin{remark}
This approximation formula is order two, but this could be increased if we remove the hypothesis of positivity on $\zeta$. Actually, if the $m$ first moments of $\zeta$ vanish, then the approximation has order $m$, see \cite{raviart2006analysis}. However, for the applications we have in mind, like the simulation of a landscape evolution model, the unknown $u$ is either a fluid height or a sediment concentration, so we restrict our discussion to $\zeta$ a non-negative function. 
\end{remark}

\begin{proof}
Let $x\in\RR^d_+$: we expand the function $u$  up order $2$ with respect to $z=y-x\to 0$. As $u$ is $C^2(\widetilde{U}_\ell)$, for all $y \in \overline{U_\delta}$, one has: 
    \begin{align*}
        u(x)-u(y) = (x-y) \cdot \nabla u(x) + R(x,y),
    \end{align*}
    where $R$ is a remainder which verifies: 
    $$|R(x,y)| \leq |x-y|^2 \underset{z \in [x,y]}{\sup} |H(u)(z)|,$$
    Where $|H(u)(z)|$ denotes an appropriate matrix norm of the Hessian matrix of $u$ at the point $z$. Thus, as the integral of $\zeta^h$ is equal to $1$, one finds
    
    \begin{align*}
        u(x) - \int_{\RR^d} u(y) \zeta^h(x-y) dy &= \int_{\RR^d} \left(u(x)-u(y) \right) \zeta^h(x-y) dy\\
        & = \int_{\RR^d} (x-y) \zeta^h(x-y) dy \cdot \nabla u(x) + \int_{\RR^d} R(x,y) \zeta^h(x-y) dy.
    \end{align*}
    
    As $\zeta^h$ is a radial function, we have
    $$\displaystyle\int_{\RR^d} (x-y) \zeta^h(x-y) dy = 0.$$
    Moreover, the support of the function $y \mapsto \zeta^h(x-y)$ is contained in $\overline{B}_d(x,h) \subset \overline{B}_d(x,\delta) \subset \widetilde{U_\ell}$. Consequently, we have
    
    $$
        |u(x) - u \ast \zeta^h(x)| \leq \sup_{y\in \overline{B}_d(x,h)} |R(x,y)| \leq C(d) h^2 \, \underset{\substack{y \in \overline{B}_d(x,\delta)\\1 \leq i,j \leq d}}{\sup } \, |\partial^2_{ij} u(y)| = C(x,\delta) h^2,
    $$
    where $C(d)$ is a constant which only depends on of the dimension $d$ and we can set
    \begin{align} 
        C(x,\delta)=C(d)\underset{\substack{y \in \overline{B}_d(x,\delta)\\
        1 \leq i,j \leq d}}{\sup } \, |\partial^2_{i,j} u(y)| < +\infty,
        \label{eq_lemma_bound_convol_cst}
    \end{align}
    because $\overline{B}_d(x,\delta)$ is compact.
    This completes the proof of the lemma.
\end{proof}

\begin{remark}
To obtain the bound~\eqref{eq_lemma_bound_convol} on $\RR_+^d$, we need to suppose that $u$ is defined and smooth on the larger domain $U_\delta$ (see formula~\eqref{eq_lemma_bound_convol_cst} for the bound $C(x,\delta)$). Note that with the definition of $C(x,\delta)$, it is a straightforward consequence that for any $K$ compact subset of $\RR^d_+$, there exists $C(X,\delta)$ such that we have the uniform estimate:
$$
\displaystyle
\max_{x\in K}|u(x)-u\ast\zeta^h(x)|\leq C(X,\delta)h^2.
$$
\end{remark}

\subsection{Quadrature formula and the SPH-like discretization} \label{subsection_2}

In this section we introduce a discretization of the integral approximation  \eqref{eq_approx_u_int} of $u$. For that purpose, we first recall a theorem from~\cite{raviart2006analysis}, which provides a quadrature formula and an error estimate for functions in Sobolev spaces. 
\begin{theorem}
    Let $m$ be an integer such that $m>d$. Then there exists $C>0$ such that, $\forall f \in W^{m,1}(\RR^{d})$ and $\forall \varepsilon>0$ :
    \begin{align}
       \left|\int_{\RR^{d}} f(x) dx - \varepsilon^{d} \sum\limits_{k \in \ZZ^{d}} f(x_k)\right| \leq C \varepsilon^m \| f \|_{W^{m,1}(\RR^{d})},
    \end{align}
    \label{th_quadrature_Raviard}
    where $(x_k)_i= k_i \, \varepsilon,\quad \forall i=1,\dots,d$.
\end{theorem}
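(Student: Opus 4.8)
The plan is to prove Theorem~\ref{th_quadrature_Raviard} via the Poisson summation formula, which turns the quadrature error into a sum of high-frequency Fourier modes of $f$; the $W^{m,1}$-regularity forces these modes to decay polynomially of order $m$, and the condition $m>d$ is precisely what makes the remaining series summable. The Fourier route is cleaner than a cell-by-cell Taylor/Bramble--Hilbert argument because it produces directly both the sharp power $\varepsilon^m$ and the $W^{m,1}$-norm on the right-hand side.

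The first step is to make the statement meaningful. Since $m>d$, the Sobolev embedding $W^{m,1}(\RR^d)\hookrightarrow C_0(\RR^d)$ holds, so $f$ has a unique continuous representative and the point values $f(x_k)$ make sense. To control the lattice sum and to license Poisson summation I would tile $\RR^d$ by unit cubes $Q_k=k+[0,1)^d$ and apply the Sobolev inequality on each cube: this gives $\sum_{k\in\ZZ^d}\sup_{Q_k}|f|\le C\sum_{k\in\ZZ^d}\|f\|_{W^{m,1}(Q_k)}=C\,\|f\|_{W^{m,1}(\RR^d)}<\infty$, the last equality because the $L^1$-norms of $\partial^\alpha f$, $|\alpha|\le m$, are additive over the tiling. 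The same argument applied to $f_\varepsilon(x):=f(\varepsilon x)$ shows the left-hand side of the claim is absolutely convergent and that $f_\varepsilon$ satisfies the hypotheses of the Poisson summation formula.

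Writing $\widehat g(\omega)=\int_{\RR^d}g(x)e^{-2\pi i x\cdot\omega}\,dx$, Poisson summation gives $\sum_{k\in\ZZ^d}f_\varepsilon(k)=\sum_{l\in\ZZ^d}\widehat{f_\varepsilon}(l)$; since $\widehat{f_\varepsilon}(\omega)=\varepsilon^{-d}\widehat f(\omega/\varepsilon)$, this reads $\varepsilon^d\sum_{k\in\ZZ^d}f(x_k)=\sum_{l\in\ZZ^d}\widehat f(l/\varepsilon)$. The mode $l=0$ is $\widehat f(0)=\int_{\RR^d}f(x)\,dx$, so the quadrature error is exactly $\sum_{l\neq 0}\widehat f(l/\varepsilon)$. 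Next I would establish the decay of $\widehat f$: for every multi-index $\alpha$ with $|\alpha|\le m$ one has $\partial^\alpha f\in L^1$ and $(2\pi i\omega)^\alpha\widehat f(\omega)=\widehat{\partial^\alpha f}(\omega)$, hence $|\omega^\alpha\widehat f(\omega)|\le(2\pi)^{-|\alpha|}\|\partial^\alpha f\|_{L^1}$; summing over $|\alpha|\le m$ and using $1+|\omega|^m\le C(m,d)\sum_{|\alpha|\le m}|\omega^\alpha|$ yields $|\widehat f(\omega)|\le C(m,d)\,(1+|\omega|^m)^{-1}\,\|f\|_{W^{m,1}(\RR^d)}$.

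Putting $\omega=l/\varepsilon$ for $l\neq 0$ and using $1+|l/\varepsilon|^m\ge|l|^m\varepsilon^{-m}$, one arrives at
\begin{align*}
\Big|\int_{\RR^d}f(x)\,dx-\varepsilon^d\sum_{k\in\ZZ^d}f(x_k)\Big|
&\le\sum_{l\in\ZZ^d\setminus\{0\}}\big|\widehat f(l/\varepsilon)\big|\\
&\le C(m,d)\,\varepsilon^m\,\|f\|_{W^{m,1}(\RR^d)}\sum_{l\in\ZZ^d\setminus\{0\}}\frac{1}{|l|^m}.
\end{align*}
Because $m>d$, the series $\sum_{l\neq 0}|l|^{-m}$ converges, and absorbing it into the constant gives the claim with $C=C(m,d)$ independent of $f$ and of $\varepsilon$. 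The only genuinely delicate points are the borderline Sobolev embedding and the rigorous justification of the Poisson summation formula in the non-Schwartz setting; both are dispatched by the unit-cube tiling estimate of the second paragraph, which moreover makes the dependence of $C$ on $m$ and $d$ transparent.
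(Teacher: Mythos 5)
The paper offers no proof of Theorem~\ref{th_quadrature_Raviard}: the statement is imported verbatim from the cited reference, so there is no internal argument to compare against. Your Poisson-summation proof is correct and complete --- the unit-cube tiling estimate justifies the pointwise lattice sum and the periodization, the bound $|\widehat f(\omega)|\le C(m,d)\,(1+|\omega|^m)^{-1}\|f\|_{W^{m,1}(\RR^d)}$ is exactly what the $W^{m,1}$ regularity yields, and the hypothesis $m>d$ enters precisely where it must (the embedding giving continuous point values, and the summability of $\sum_{l\neq 0}|l|^{-m}$) --- and this is the classical route to this lattice superconvergence estimate, the one a per-cell Taylor or Bramble--Hilbert argument could not deliver, and essentially the argument of the source the paper cites.
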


\begin{remark}
The condition $m>d$ is used to have the continuous embedding $W^{m,1}(\RR^{d}) \hookrightarrow C^0(\RR^{d})$, which is needed to give a meaning to the point values $f(x_k)$.
\end{remark}

\noindent
Now we discretize the integral \eqref{eq_approx_u_int} with respect to $\xi\in\RR^{d-1}$; this yields the following semi-discrete approximation of $u$:
\begin{equation}\label{u-int-xi}
    \displaystyle
    u(x) \approx \varepsilon^{d-1}\sum_{k\in\mathbb{Z}^{d-1}} \int_{-\ell}^\infty u(\Phi(s,\xi_k)) \zeta^h(x-\Phi(s,\xi_k)) ({\rm det}J_\Phi)(s,\xi_k) ds,
\end{equation}
with $(\xi_k)_i = k_i \, \varepsilon, \quad \forall i=1,\dots,d-1$. Introducing $x_k(s) = \Phi(s,\xi_k)$, $\omega_k(s) = \varepsilon^{d-1}({\rm det}J_\Phi)(s,\xi_k)$ and $u_k(s) = u(x_k(s))$, the approximation \eqref{u-int-xi} reads
$$
\displaystyle
u(x)\approx \sum_{k\in\mathbb{Z}^{d-1}}\int_{-\ell}^\infty \omega_k(s) u_k(s) \zeta^h(x-x_k(s))ds,
$$
in the spirit of the SPH method. \\

Now we derive evolution equations for the quantities $x_k,u_k$ and $\omega_k$: this will provide a natural way to compute these quantities on a mesh discretizing the interval $(-\ell, +\infty)$. 
%\begin{proposition}
%    Let $k \in \ZZ^{d-1}$ and $s > -l$. Then, the weight $\omega_k(s)$ verifies:
 %   \begin{equation}
 %       \omega_k'(s) = \left( \dive {\bf a} \right) \left(\Phi(s,\xi_k) \right) \omega_k(s).
%        \label{eq_evol_w'}
%    \end{equation}
%    \label{prop_evol_w}
%\end{proposition}
%\begin{proof}
 %   By Liouville's formula,
 %   \begin{align*}
 %       \frac{d}{ds} (\det %J_\Phi)(s,\xi) = (\dive %a)(\Phi(s,\xi)) (\det J_\Phi)(s,\xi).
%    \end{align*}
%    Therefore,
%    \begin{equation*}
%        \omega_k'(s) = ds \varepsilon^{d-1} \frac{d}{ds} (\det J_\Phi)(s,\xi) dy 
%        = ds \varepsilon^{d-1} (\dive {\bf a})(\Phi(s,\xi)) (\det J_\Phi)(s,\xi)
 %       = \left( \dive {\bf a} \right) %\left(\Phi(s,\xi_k) \right) \omega_k(s).
%    \end{equation*}
%\end{proof}
\begin{proposition}
    Let $u$ be the solution of equation~\eqref{eq_system}, with boundary condition \eqref{boundary}. Let $k \in \ZZ^{d-1}$. Then, the quantities $x_k(s)=\Phi(s,\xi_k)$, $u_k(s)= u(x_k(s))$ and $\omega_k(s)= \varepsilon^{d-1} (\det J_\Phi) (s,\xi_k)$ satisfy the differential system
	\begin{subequations}
	    \begin{empheq}[left=\empheqlbrace]{align}
            & x_k'(s) = {\bf a}(x_k(s)), \label{syst_part1}\\
            & \omega_k'(s) = (\dive {\bf a})(x_k(s)) \omega_k(s),\quad\forall s>-\ell \label{syst_part2}\\
            & u_k'(s) = -u_k(s) \left[ (\dive {\bf a}) (x_k(s)) + a_0(x_k(s)) \right] + S(x_k(s)), \label{syst_part3bis} \\
	        & x_k(0) = (0,\xi_k), \quad \omega_k(0) = \varepsilon^{d-1}{ \bf a}_1(0,\xi_k), \quad u_k(0) = g(\xi_k). \label{syst_part4}
	    \end{empheq}
	    \label{syst_part}
	\end{subequations}
	\label{prop_syst_u_k(s)}
\end{proposition}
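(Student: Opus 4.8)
The plan is to verify the three differential equations and their initial conditions in \eqref{syst_part} one at a time, by direct differentiation along the characteristic curves. Under the regularity in force (Proposition~\ref{prop_caract_domain}, applied with $m\geq 1$, together with ${\bf a}\in C^2$), the flow $\Phi$ is at least $C^2$ on $\Omega_\ell$ and the solution $u$ is at least $C^1$ on $\widetilde U_\ell$, so all the maps $s\mapsto x_k(s),u_k(s),\omega_k(s)$ are well defined and differentiable on $(-\ell,+\infty)$ and every derivative below is legitimate. Equation \eqref{syst_part1} together with $x_k(0)=(0,\xi_k)$ is simply the definition of the characteristic through $(0,\xi_k)$: by construction $x_k(s)=\Phi(s,\xi_k)=x_{\xi_k}(s)$ solves \eqref{vector_field}.

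For \eqref{syst_part3bis}, I would rewrite \eqref{eq_system} in non-divergence form, $\dive({\bf a}u)={\bf a}\cdot\nabla u+(\dive {\bf a})\,u$, so that $u$ satisfies ${\bf a}(x)\cdot\nabla u(x)=S(x)-\big((\dive {\bf a})(x)+a_0(x)\big)u(x)$ pointwise on $\widetilde U_\ell$. The chain rule and \eqref{syst_part1} then give
\[
u_k'(s)=\nabla u(x_k(s))\cdot x_k'(s)=\nabla u(x_k(s))\cdot {\bf a}(x_k(s))=S(x_k(s))-\big((\dive {\bf a})(x_k(s))+a_0(x_k(s))\big)u_k(s),
\]
which is \eqref{syst_part3bis}; and $u_k(0)=u(\Phi(0,\xi_k))=u(0,\xi_k)=g(\xi_k)$ by the boundary condition \eqref{boundary}. (Equivalently one could differentiate the representation formula \eqref{eq_expr_sol_u} of Lemma~\ref{def-u-char} in $s$; both routes yield the same identity.)

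Equation \eqref{syst_part2} is the one computation needing a little care. Commuting partial derivatives and using $\partial_s\Phi={\bf a}(\Phi)$, each column of the Jacobian $J_\Phi$ obeys the linearized equation $\partial_s(\partial_s\Phi)=D{\bf a}(\Phi)\,\partial_s\Phi$ and $\partial_s(\partial_{\xi_j}\Phi)=\partial_{\xi_j}\big({\bf a}(\Phi)\big)=D{\bf a}(\Phi)\,\partial_{\xi_j}\Phi$, where $D{\bf a}$ is the Jacobian matrix of ${\bf a}$; hence $\partial_s J_\Phi=D{\bf a}(\Phi)\,J_\Phi$. Since $\det J_\Phi$ does not vanish on $\Omega_\ell$ (established just before \eqref{eq_change_var}), Jacobi's formula gives
\[
\partial_s\det J_\Phi=\det J_\Phi\,\mathrm{tr}\big(J_\Phi^{-1}\partial_s J_\Phi\big)=\det J_\Phi\,\mathrm{tr}\big(J_\Phi^{-1}D{\bf a}(\Phi)J_\Phi\big)=\det J_\Phi\,\mathrm{tr}\big(D{\bf a}(\Phi)\big)=(\dive {\bf a})(\Phi)\,\det J_\Phi,
\]
i.e. the Liouville identity already recorded in the text. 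Multiplying by $\varepsilon^{d-1}$ yields $\omega_k'(s)=(\dive {\bf a})(x_k(s))\,\omega_k(s)$, and the initial value $\omega_k(0)=\varepsilon^{d-1}(\det J_\Phi)(0,\xi_k)=\varepsilon^{d-1}{\bf a}_1(0,\xi_k)$ is exactly the evaluation at $s=0$ carried out above \eqref{eq_change_var} from $\Phi(0,\xi)=(0,\xi)$.

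There is essentially no deep obstacle: the proposition is a bookkeeping exercise combining the definition of the flow, the non-divergence form of the equation, and the Jacobi/Liouville formula. The only mildly delicate point is the identity $\partial_s J_\Phi=D{\bf a}(\Phi)\,J_\Phi$, which uses Schwarz's lemma and hence $\Phi\in C^2$ (guaranteed here); one can sidestep even this by differentiating directly the closed form $(\det J_\Phi)(s,\xi)=\exp\big(\int_0^s(\dive {\bf a})(\Phi(t,\xi))\,dt\big)(\det J_\Phi)(0,\xi)$ already displayed in the paper.
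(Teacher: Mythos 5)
Your proposal is correct and follows essentially the same route as the paper: \eqref{syst_part1} and \eqref{syst_part4} from the definition of the characteristics and the boundary condition, \eqref{syst_part2} from Liouville's formula, and \eqref{syst_part3bis} from the chain rule applied to the non-divergence form of \eqref{eq_system}. The only difference is that you derive the Liouville identity via the variational equation $\partial_s J_\Phi = D{\bf a}(\Phi)\,J_\Phi$ and Jacobi's formula, whereas the paper simply invokes it; this is a welcome extra detail but not a different argument.
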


\begin{proof}
Equation~\eqref{syst_part1} follows  from the definition of the characteristic system \eqref{vector_field}. Moreover, equation \eqref{syst_part2} is a direct consequence of Liouville's formula:
\begin{align*}
    \frac{d}{ds} (\det J_\Phi)(s,\xi) = (\dive {\bf a})(\Phi(s,\xi)) (\det J_\Phi)(s,\xi).
\end{align*}
    %Therefore,
    %\begin{equation*}
    %    \omega_k'(s) = \varepsilon^{d-1} \frac{d}{ds} (\det J_\Phi)(s,\xi) 
    %    = \varepsilon^{d-1} (\dive {\bf a})(\Phi(s,\xi)) (\det J_\Phi)(s,\xi)
    %    = \left( \dive {\bf a} \right) \left(\Phi(s,\xi_k) \right) \omega_k(s).
    %\end{equation*}
%This proves Equation~\eqref{syst_part1}.

Next, we prove \eqref{syst_part3bis}. By applying the chain rule, equation~\eqref{eq_system} and the characteristic system \eqref{vector_field},
one obtains:
\begin{align*}
    \frac{d}{ds} \left( u(x_k(s)) \right) &= x_k'(s). \nabla u(x_k(s)) 
    = {\bf a}(x_k(s)). \nabla u(x_k(s))\\
    &= -u(x_k(s)) \left[ (\dive {\bf a})(x_k(s))+a_0(x_k(s)) \right] + S(x_k(s)).
\end{align*}
This completes the proof of the proposition.
\end{proof}

\noindent
We also consider the additional quantity $\rho_k=\omega_ku_k$. It is an easy consequence of proposition \eqref{prop_syst_u_k(s)} that these functions satisfy the following result.
\begin{corollary}
    $\forall k \in \ZZ^{d-1}$, the function $\rho_k=\omega_ku_k$ is the solution of the Cauchy problem:
	\begin{subequations}
	    \begin{empheq}[left=\empheqlbrace]{align}
            & \rho_k'(s) + a_0(x_k(s)) \rho_k(s) = \omega_k(s) S(x_k(s)),\quad \forall s>-\ell, \label{syst_part3}\\
	        & \rho_k(0) = \varepsilon^{d-1}{ \bf a}_1(0,\xi_k) g(\xi_k). \label{syst_part3_0}
	    \end{empheq}
	    \label{syst_part_rho}
	\end{subequations}
	\label{corr_syst_rho_k(s)}
\end{corollary}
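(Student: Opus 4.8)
The plan is to compute $\rho_k'$ directly from the definition $\rho_k=\omega_k u_k$ via the Leibniz rule, and then substitute the evolution equations for $\omega_k$ and $u_k$ already established in Proposition~\ref{prop_syst_u_k(s)}. This reduces the statement to a short algebraic manipulation plus an evaluation at $s=0$.

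First I would write $\rho_k'(s)=\omega_k'(s)\,u_k(s)+\omega_k(s)\,u_k'(s)$. Using \eqref{syst_part2}, the first summand equals $(\dive{\bf a})(x_k(s))\,\omega_k(s)u_k(s)=(\dive{\bf a})(x_k(s))\,\rho_k(s)$. Using \eqref{syst_part3bis}, the second summand equals $\omega_k(s)\big[-u_k(s)\big((\dive{\bf a})(x_k(s))+a_0(x_k(s))\big)+S(x_k(s))\big]$, which I would rearrange as $-(\dive{\bf a})(x_k(s))\,\rho_k(s)-a_0(x_k(s))\,\rho_k(s)+\omega_k(s)S(x_k(s))$. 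Adding the two contributions, the terms involving $\dive{\bf a}$ cancel, leaving $\rho_k'(s)=-a_0(x_k(s))\,\rho_k(s)+\omega_k(s)S(x_k(s))$, which is exactly \eqref{syst_part3}.

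For the initial condition I would simply evaluate $\rho_k(0)=\omega_k(0)\,u_k(0)$ and insert the values $\omega_k(0)=\varepsilon^{d-1}{\bf a}_1(0,\xi_k)$ and $u_k(0)=g(\xi_k)$ from \eqref{syst_part4}, which gives $\rho_k(0)=\varepsilon^{d-1}{\bf a}_1(0,\xi_k)\,g(\xi_k)$, i.e.\ \eqref{syst_part3_0}. If one wants to assert that $\rho_k$ is \emph{the} solution of this Cauchy problem, it suffices to note that the coefficient $s\mapsto a_0(x_k(s))$ and the right-hand side $s\mapsto\omega_k(s)S(x_k(s))$ are continuous on $(-\ell,+\infty)$ (as compositions of continuous functions with the $C^1$ curve $x_k$, together with the smoothness of $\omega_k$), so the linear scalar ODE has a unique solution with the prescribed value at $s=0$ by the Cauchy--Lipschitz theorem.

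There is no genuine obstacle in this proof; the only point deserving care is the sign bookkeeping that yields the cancellation of the two $(\dive{\bf a})\rho_k$ terms. That cancellation is in fact the whole point of introducing $\rho_k$: the equation for $\rho_k$ involves only $a_0$ and not $\dive{\bf a}$, which makes it the natural ``conservative'' variable to evolve numerically alongside $x_k$ and $\omega_k$.
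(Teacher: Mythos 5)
Your proof is correct and follows exactly the route the paper intends: the paper states this corollary as an ``easy consequence'' of Proposition~\ref{prop_syst_u_k(s)} without writing out the details, and your Leibniz-rule computation with the cancellation of the $(\dive{\bf a})\rho_k$ terms, together with the evaluation of $\omega_k(0)u_k(0)$, is precisely that omitted argument. The closing remark on uniqueness via Cauchy--Lipschitz is a sensible addition that justifies the word ``the'' in the statement.
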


%\begin{proof}
%    Multiplying Equation~\eqref{syst_part3bis} by $\omega_k(s)$ and using Equation~\eqref{syst_part2} gives:
%\begin{align*}
%    \omega_k(s) \frac{d}{ds} \left( u(x_k(s)) \right) + u(x_k(s)) \omega_k'(s) 
%    &= \omega_k(s) u_k(s) a_0(x_k(s)) + \omega_k(s) S(x_k(s)).
%\end{align*}
%This is the Equation~\eqref{syst_part3}. 
%\end{proof}

Systems \eqref{syst_part} and \eqref{syst_part_rho} are the analogue of the equations in the SPH method.
%same as for non stationary particles schemes, in $\RR^{d-1}$. 
The difference here is that we replace the time variable by a variable $s$ that parameterizes the space coordinate $x_1=\Phi_1(s,\xi)>0$.\\

%is not a time variable but rather a mere parametrisation of the characteristics. 
We end this section by deriving a priori estimates on the unknowns $\rho_k$ and $\omega_k$ that will be useful in the proof of convergence of our numerical scheme. For that purpose, we make the following additional hypothesis:
\begin{assumption}
  The data ${\bf a}$, and $g,a_0,S$ satisfy the regularity assumptions.
    $$\displaystyle
    {\bf a} \in C^1(\widetilde{U}_\ell) \cap W^{2,\infty}(\widetilde{U}_\ell),\quad g \in L^\infty(\RR^{d-1}), \quad a_0, \, S \in C^1(\widetilde{U}_\ell) \cap W^{1,\infty}(\widetilde{U}_\ell).$$
    \label{assump_reg}
\end{assumption}
\noindent
We obtain the following estimates on the weight $\omega_k$ and on the functions $\rho_k$.
\begin{lemma}
Suppose that ${\bf a}$ and $g,a_0,S$ satisfy the assumption \ref{assump_reg}.
Let $(t,T) \in \RR^2$ such that $-\ell < t < 0 < T < +\infty$. Then there exist constants $C_0(t,T)$, $C_1(t,T)$, $C_2(t,T)$, $C_3^0(t,T)$, $C_3^2(t,T) >0$ independent of $\varepsilon$ and $h$ such that for all $k \in \ZZ^{d-1}$ and for all $s\in[t,T]$, one has:
\begin{align}
    &C_0(t,T) \varepsilon^{d-1} \leq \omega_k(s) \leq C_1(t,T) \varepsilon^{d-1}, \label{eq_bound_w}\\
    &|\omega_k''(s)| \leq C_2(t,T) \varepsilon^{d-1}, \label{eq_bound_w''}\\
    &|\rho_k(s)| \leq C_3^0(t,T)  \varepsilon^{d-1},
    \quad |\rho_k''(s)| \leq C_3^2(t,T)  \varepsilon^{d-1}. \label{eq_bound_rho}
\end{align}
\label{lemma_bound_wk_rhok}
\end{lemma}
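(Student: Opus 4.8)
The plan is to derive all the bounds from the differential systems \eqref{syst_part} and \eqref{syst_part_rho} by elementary Gronwall-type arguments, using the regularity of the data from Assumption~\ref{assump_reg} and the fact that the characteristic curves $x_k(s)$ stay in $\widetilde{U}_\ell$ (in fact in a fixed compact tube, independent of $k$, once $s$ is confined to $[t,T]$). The key structural observation is that every bound is stated with the factor $\varepsilon^{d-1}$ pulled out; this factor comes entirely from the initial data $\omega_k(0) = \varepsilon^{d-1}\mathbf{a}_1(0,\xi_k)$ and $\rho_k(0) = \varepsilon^{d-1}\mathbf{a}_1(0,\xi_k)g(\xi_k)$, and it propagates linearly through the equations because \eqref{syst_part2} and \eqref{syst_part3} are linear in $\omega_k$ and $\rho_k$ respectively. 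So in practice one may normalise by setting $\hat\omega_k = \varepsilon^{-(d-1)}\omega_k$, $\hat\rho_k = \varepsilon^{-(d-1)}\rho_k$ and prove all bounds for $\hat\omega_k,\hat\rho_k$ with constants independent of $\varepsilon$ and $k$.

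First I would establish the two-sided bound \eqref{eq_bound_w} on $\omega_k$. Integrating \eqref{syst_part2} explicitly gives $\omega_k(s) = \omega_k(0)\exp\bigl(\int_0^s (\dive\mathbf{a})(x_k(\tau))\,d\tau\bigr)$, and since $\dive\mathbf{a} \in L^\infty(\widetilde{U}_\ell)$ (this is where $\mathbf{a}\in W^{2,\infty}$ enters, though $W^{1,\infty}$ on $\mathbf{a}$ would already suffice here) and $0<\alpha\le \mathbf{a}_1(0,\xi_k)\le|\mathbf{a}(0,\xi_k)|\le\beta$ by \eqref{eq_a_incoming}, the exponent is bounded above and below by $\pm\|\dive\mathbf{a}\|_\infty\max(|t|,T)$, uniformly in $k$ and $s\in[t,T]$. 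This yields $C_0(t,T)$ and $C_1(t,T)$. For \eqref{eq_bound_w''}, differentiate \eqref{syst_part2}: $\omega_k'' = \bigl(\nabla(\dive\mathbf{a})(x_k)\cdot\mathbf{a}(x_k)\bigr)\omega_k + (\dive\mathbf{a})(x_k)\,\omega_k'$, and $\omega_k' = (\dive\mathbf{a})(x_k)\,\omega_k$; here $\nabla(\dive\mathbf{a})$ requires second derivatives of $\mathbf{a}$, which is exactly the $W^{2,\infty}$ hypothesis. Bounding each factor by its sup norm and using \eqref{eq_bound_w} gives \eqref{eq_bound_w''}. The bounds on $\rho_k$ in \eqref{eq_bound_rho} are analogous: from \eqref{syst_part3}, $\rho_k(s) = \rho_k(0)e^{-\int_0^s a_0(x_k)} + \int_0^s \omega_k(\sigma)S(x_k(\sigma))e^{-\int_\sigma^s a_0(x_k)}\,d\sigma$, and using $a_0,S\in L^\infty$, $|g|\le\|g\|_\infty$, the already-proven bound on $\omega_k$, and $|s|\le\max(|t|,T)$, one gets $|\rho_k(s)|\le C_3^0(t,T)\varepsilon^{d-1}$. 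For $\rho_k''$, differentiate \eqref{syst_part3} once: $\rho_k'' = -\bigl(\nabla a_0(x_k)\cdot\mathbf{a}(x_k)\bigr)\rho_k - a_0(x_k)\rho_k' + \omega_k' S(x_k) + \omega_k\bigl(\nabla S(x_k)\cdot\mathbf{a}(x_k)\bigr)$; the terms $\nabla a_0$, $\nabla S$ need the $W^{1,\infty}$ regularity of $a_0,S$, and everything on the right is already controlled (including $\rho_k' = -a_0(x_k)\rho_k + \omega_k S(x_k)$ and $\omega_k' = (\dive\mathbf{a})(x_k)\omega_k$), so we obtain $|\rho_k''(s)|\le C_3^2(t,T)\varepsilon^{d-1}$.

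The one point that needs care — and which I would flag as the main technical obstacle, even though it is more bookkeeping than depth — is the uniformity of all constants in $k\in\ZZ^{d-1}$. The issue is that $\xi_k$ ranges over an unbounded grid, so a priori the characteristic $x_k$ could wander off to regions where one has no control. This is resolved by the positive invariance of $\RR^d_+$ under the flow together with Assumption~\ref{assump_flow}: since $\mathbf{a}$, $a_0$, $S$ and their relevant derivatives are bounded on all of $\widetilde{U}_\ell$ (Assumption~\ref{assump_reg} gives $W^{k,\infty}$ norms, not merely local bounds), every sup appearing above is a global sup over $\widetilde{U}_\ell$ and hence finite independently of which characteristic we sit on; the only $k$-dependence in the initial data is through $\mathbf{a}_1(0,\xi_k)$ and $g(\xi_k)$, both controlled uniformly by \eqref{eq_a_incoming} and $g\in L^\infty$. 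Thus every constant depends only on $t$, $T$, the dimension $d$, and the $W^{\bullet,\infty}(\widetilde{U}_\ell)$-norms of the data, but not on $\varepsilon$, $h$, or $k$, which is exactly the claim. Note that $h$ plays no role at all in this lemma — the weights and densities are defined independently of the kernel — so independence of $h$ is immediate.
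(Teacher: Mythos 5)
Your proposal is correct and follows essentially the same route as the paper: explicit integration of the linear ODE for $\omega_k$ with the Liouville exponential, differentiation of the equations for the second-derivative bounds, and the Duhamel formula for $\rho_k$, all controlled by the global $W^{k,\infty}(\widetilde{U}_\ell)$ norms from Assumption~\ref{assump_reg} and the boundary bounds \eqref{eq_a_incoming}, which is exactly what makes the constants uniform in $k$.
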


\begin{proof}
Let us fix $k \in \ZZ^{d-1}$ and $s\in[t,T]$. Then, by using assumption~\eqref{eq_a_incoming}, one finds
\begin{align*}
    \alpha \varepsilon^{d-1} \leq \omega_k(0)  = \varepsilon^{d-1}{ \bf a}_1(0,\xi_k)
    &\leq \beta \varepsilon^{d-1}.
\end{align*}
A direct integration of equation \eqref{syst_part2} yields:
\begin{align*}
    \omega_k(s)  = \omega_k(0) \exp \left(\int_0^s (\dive {\bf a})(x_k(t)) dt \right)\leq  \beta\varepsilon^{d-1}  \exp \left((T-t)\|\dive {\bf a}\|_{L^\infty(\widetilde{U}_\ell)} \right) 
    := C_1(t,T)\varepsilon^{d-1}.
\end{align*}
Similarly, one has
\begin{align}
    \omega_k(s) & \geq \alpha \varepsilon^{d-1} \exp \left( (t-T)\|\dive {\bf a}\|_{L^\infty(\widetilde{U}_\ell)} \right) 
    := C_0(t,T) \varepsilon^{d-1}.
    \label{eq_bound_wk_proof}
\end{align}\\
    
    % \begin{comment}
    % and $ -l < s \leq T$, by Lemma \ref{lemma_bound_norm_Js}:
    % \begin{align*}
    %     \omega_k(s)  = ds \varepsilon^{d-1} (\det J_\Phi)(s,\xi_k)
    %     &\leq ds \varepsilon^{d-1} \underset{-l < t \leq T, \, k \in \ZZ^d}{\sup} (\det J_\Phi)(t,\xi_k) \leq C_1 ds \varepsilon^{d-1}.
    % \end{align*}
    % Similarly, and as $\det (J_{\Phi^{-1}}) = \det^{-1} (J_\Phi)$:
    % \begin{align*}
    %     \omega_k(s) & \geq ds \varepsilon^{d-1} \underset{-l < t \leq T, \, k \in \ZZ^d}{\inf} (\det J_\Phi)(t,\xi_k)
    %     = ds \varepsilon^{d-1} \underset{-l < t \leq T, \, k \in \ZZ^d}{\sup} (\det J_{\Phi^{-1}})(t,\xi_k) \leq C_0 ds \varepsilon^{d-1}.
    % \end{align*}
    % \end{comment}

From equations~\eqref{syst_part2} and~\eqref{eq_bound_wk_proof}, one deduces that
    \begin{align*}
        |\omega_k''(s)| 
        & = |(\dive {\bf a})(x_k(s))^2 \omega_k(s) + {\bf a}(x_k(s)). \nabla (\dive {\bf a})(x_k(s)) \omega_k(s)|\\
        & \leq 2\|a\|^2_{W^{2,\infty}(\widetilde{U}_\ell)^d}  \omega_k(s)
        \leq 2\|a\|^2_{W^{2,\infty}(\widetilde{U}_\ell)^d}C_1(t,T):=C_2(t,T)  \varepsilon^{d-1}.
    \end{align*}\\
%with $C_2(t,T)=2\|a\|^2_{W^{2,\infty}(\widetilde{U}_\ell)^d}C_1(t,T)$. 

Now let us show the first estimate of~\eqref{eq_bound_rho}. By using \eqref{syst_part_rho} and for all $s\in(t,\, T)$, one finds that: 
\begin{align*}
    \left| \rho_k(s) \right| 
    \! = \!& \left| \Bigg[ \varepsilon^{d-1}{ \bf a}_1(0,\xi_k) \, g(\xi_k) \! + \! \!
    \int_0^s \! \omega_k(\mu) S(x_k(\mu))
    \exp \left(\int_0^\mu a_0(x_k(\tau)) d\tau \! \right) d\mu \Bigg]
    \exp \left(\! - \! \int_0^s a_0(x_k(\mu)) d\mu \right) \right|\\
    &\leq \varepsilon^{d-1} \exp\left((T-t)\|a_0\|_{L^\infty(\widetilde{U}_\ell)}\right) \left(\|g\|_{L^\infty(\widetilde{U}_\ell)} \|a\|_{L^\infty(\widetilde{U}_\ell)} + (T-t)C_1(t,T) \|S\|_{L^{\infty}(\widetilde{U}_\ell)} \right) \\
    & := C^0_3(t,T) \varepsilon^{d-1}.
\end{align*}
Then, from \eqref{syst_part_rho} and by using the bounds on $\rho_k$, one obtains:
\begin{eqnarray*}
\displaystyle
    \left| \rho_k'(s) \right| 
    &=& \left|- a_0(x_k(s)) \rho_k(s) + \omega_k(s) S(x_k(s)) \right|\\
    \displaystyle
    && \leq \varepsilon^{d-1}\left(\|a_0\|_{L^\infty(\widetilde{U}_\ell)}C_3^0(t,T)+\|S\|_{L^{\infty}(\widetilde{U}_\ell)}C_1(t,T)\right):=C_3^1(t,T)\varepsilon^{d-1}.
\end{eqnarray*}\\

Finally, by differentiating \eqref{syst_part_rho} with respect to $s$, one deduces that
\begin{eqnarray*}
    \left| \rho_k''(s) \right| &=&  \left| x_k'(s).\nabla a_0(x_k(s)) \rho_k(s) + a_0(x_k(s)) \rho_k'(s) - \omega_k'(s) S(x_k(s)) - \omega_k(s) x_k'(s) . \nabla S(x_k(s)) \right|\\
    &&\leq  C_3^2(t,T)  \varepsilon^{d-1}
\end{eqnarray*}

with
$$
\displaystyle
C_3^2(t,T)=C(d) \left(\|a_0\|_{W^{1,\infty}(\widetilde{U}_\ell)}\left(\|{\bf a}\|_{L^{\infty}(\widetilde{U}_\ell)}C_3^0(t,T)+C_3^1(t,T)\right)+C_1(t,T)\|{\bf a}\|_{W^{1,\infty}(\widetilde{U}_\ell)}\|S\|_{W^{1,\infty}(\widetilde{U}_\ell)}\right).
$$

This completes the proof of the lemma.
\end{proof}

\subsection{Discretization of the particle trajectories: the fully discrete scheme} \label{subsection_3}

Now we introduce the discretization with respect to $s \in (-\ell, +\infty)$. Indeed, we want to approximate $u\ast\zeta^h(x)$ for all $x\in\RR^d_+$. For that purpose, we need to introduce a discretization of $\Omega_\ell=(-\ell, +\infty)\times \RR^{d-1}$
%Indeed, since the Particle_method amounts to a quadrature formula for the integral defining $u \ast \zeta^h$, we need to introduce a discretization of the domain $\Omega_\ell$ and not only of $\Omega$. This is why the discretization points in $s$ start from $-\ell$ instead of $0$. 
Let us denote $n_\ell\in\mathbb{Z}$ such that $(n_\ell-1) \Delta s< -\ell \leq n_\ell \Delta s$, where $\Delta s>0$. We will approximate $u$ with the formula:
\begin{align}
    \displaystyle
    u(x) & \approx \sum_{k\in\mathbb{Z}^{d-1}}\int_{-\ell}^{+\infty} \omega_k(s) u_k(s) \zeta^h(x-x_k(s)) ds, \nonumber \\
    \displaystyle
    &\approx \Delta s \sum_{i=n_\ell}^{+\infty} \sum_{k\in\mathbb{Z}^{d-1}} \omega_k(s_i) u_k(s_i) \zeta^h(x-x_k(s_i)), \nonumber \\
    &= \sum_{i=n_\ell}^{+\infty} \sum_{k\in\mathbb{Z}^{d-1}} \widetilde{\omega}_k(s_i) u_k(s_i) \zeta^h(x-x_k(s_i)), \nonumber\\
    &= \sum_{i=n_\ell}^{+\infty}\sum_{k\in\mathbb{Z}^{d-1}} \widetilde{\rho}_k(s_i) \zeta^h(x-x_k(s_i)) := \Pi_h^{\varepsilon,\Delta s}(u)(x), \label{eq_approx_u_def}
\end{align}
where $s_i= i \Delta s$ and $\forall i \geq n_\ell$, \, $\tilde{\omega}_k(s_i)=\Delta s \omega_k(s_i)$ and $\tilde{\rho}_k(s_i) = \Delta s\rho_k(s_i)$. Note that $\widetilde{\rho}_k, \widetilde{\omega}_k$ and $\rho_k,\omega_k$ satisfy the same differential system: the only difference lies in the initial conditions that are multiplied by a factor $\Delta s$. In what follows, we remove the tilde from $\widetilde{\rho}_k$ and $\widetilde{\omega}_k$ and simply assume that $\rho_k,\omega_k$ now satisfy \eqref{syst_part2},\eqref{syst_part3} with the modified initial conditions
$$
\displaystyle
\omega_k(0)=\Delta s\varepsilon^{d-1} {\bf a}_1(0,\xi_k),\qquad \rho_k(0) = \Delta s \varepsilon^{d-1} {\bf a}_1(0,\xi_k)g(\xi_k).
$$

Next we compute an approximation of $x_k(s_i), u(x_k(s_i)),\omega_k(s_i)$ and $\rho_k(s_i)$ through the time discretization of the differential systems \eqref{syst_part}, \eqref{syst_part_rho} with an explicit Euler scheme:

\begin{subequations}
	\begin{empheq}[left=\empheqlbrace]{align}
        & x_k^{j+1} = x_k^j + \Delta s \; {\bf a} \left(x_k^j \right),\\
        & \omega_k^{j+1} = \omega_k^j \left( 1+ \Delta s \, (\dive {\bf a}) \left(x_k^j \right) \right), \label{eq_discret_w}\\
	    & \displaystyle \rho_k^{j+1} = \rho_k^{j} - \Delta s \, a_0 \left(x_k^j \right) \rho_k^j + \Delta s \, \omega_k^j S \left(x_k^j \right), \label{eq_discret_rho}\\
		& \displaystyle x_k^0 = (0,\xi_k), \quad u_k^0 = g(\xi_k), \quad \omega_k^0 = \Delta s \varepsilon^{d-1}{ \bf a}_1(0,\xi_k), \quad \rho_k^0 = u_k^0 \omega_k^0.
	\end{empheq}
	\label{eq_syst_part_discret}
\end{subequations}
In order to simplify the discussion, we suppose for simplicity that the function $\dive {\bf a}$ can be computed exactly. Now, we can define the fully discrete approximation of the function $u$:
%through a fully discrete scheme:
\begin{equation}
    \tilde u^{\varepsilon,\Delta s}_h(x) :=\sum_{j=n_\ell}^{+\infty}\sum_{k\in\mathbb{Z}^{d-1}}
    %\sum\limits_{\substack{k \in \ZZ^{d-1}\\ n_\ell \leq j < + \infty}} 
    \omega_k^j u_k^j \zeta^h \left(x-x_k^j \right) 
    = \sum_{j=n_\ell}^{+\infty}\sum_{k\in\mathbb{Z}^{d-1}}
    %\sum\limits_{\substack{k \in \ZZ^{d-1}\\ n_\ell \leq j < + \infty}} 
    \rho_k^j \zeta^h \left(x-x_k^j \right).
    \label{eq_approx_u_h_epsilon_ds}
\end{equation}
\noindent
Note that the sum is finite and is only composed of the indices $(j,k) \in [n_\ell,+\infty[ \times \mathbb{Z}^{d-1}$ such that $x_k^j\in \overline{B}_d(x,h)$.\\

We conclude this section by computing an error estimate associated to the Euler scheme for $x_k,\omega_k,\rho_k$. In the sequel, we will denote respectively $\lfloor\,.\,\rfloor$ the floor function and $\lceil\,.\,\rceil$ the ceiling function.
\begin{lemma}
    Let  $-\ell<t<0<T$ and set $n_t:=\left\lfloor \frac{t}{\Delta s} \right\rfloor$, $N_T:=\left\lfloor \frac{T}{\Delta s} \right\rfloor $.
    Assume ${\bf a} \in C^1(\widetilde{U}_\ell) \cap W^{1,\infty}(\widetilde{U}_\ell)$. Then there exists a constant $C_\Phi(t,T) >0$ such that:
    \begin{equation}
    \displaystyle
        \forall k \in \ZZ^{d-1}, \quad \max_{i\in[n_t, n_T]}|x_k(s_i)-x_k^i| \leq C_\Phi(t,T)\Delta s.
        \label{eq_bound_x(s)-x^j}
    \end{equation}
    %where $\left\lfloor . \right\rfloor$ denotes the floor function, that is $n = \left\lfloor t \right\rfloor$ if and only if $n \in \ZZ$ and $n \leq t < n+1$.
    If, in addition, ${\bf a}$, $a_0$ and $S$ satisfy Assumption~\ref{assump_reg}, then there exist constants $C_\omega(t,T)>0$, $C_\rho(t,T)>0$ independent of $\Delta s$, $\varepsilon$ and $h$ such that:
    \begin{align}
        \forall i \in [n_t, n_T], \quad \forall k \in \ZZ^{d-1}, \quad 
        \left\{ \begin{array}{cc}
            \displaystyle |\omega_k(s_i)-\omega_k^i| \leq C_\omega(t,T) \Delta s^2 \varepsilon^{d-1},\\
            \displaystyle |\rho_k(s_i)-\rho_k^i| \leq C_\rho(t,T) \Delta s^2 \varepsilon^{d-1}.
            \label{est_cv_w_rho}
        \end{array} \right.
        %\Delta s \frac{\sup |\rho_k''(s)|}{2\sup |a_0|} \left( e^{\sup |a_0| s_i}-1 \right).
    \end{align}
    \label{lemma_euler_cv}
    Moreover, there exist constants $C_\omega^0(t,T)$ and $C_\rho^0(t,T)$ such that for all $k\in\ZZ^{d-1}$
    \begin{align}
        & \max_{i\in[n_t, n_T]}|\omega_k^i| \leq C_\omega^0(t,T) \Delta s \varepsilon^{d-1},\\
        &\max_{i\in [n_t, n_T]}|\rho_k^i| \leq C_\rho^0(t,T) \Delta s \varepsilon^{d-1}.
    \end{align}
    
\end{lemma}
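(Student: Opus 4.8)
The plan is to establish the three estimates in turn by the classical ``one-step error plus Grönwall accumulation'' argument for the explicit Euler scheme, exploiting the a priori bounds of Lemma~\ref{lemma_bound_wk_rhok} at each stage. First I would prove~\eqref{eq_bound_x(s)-x^j}: since $\mathbf a\in W^{1,\infty}(\widetilde U_\ell)$, the exact trajectory $x_k$ stays in a fixed compact tube for $s\in[t,T]$ (by Assumption~\ref{assump_flow} and positive invariance), so the local truncation error of Euler at each step is bounded by $\tfrac12\|x_k''\|_\infty\Delta s^2$ with $x_k''=(D\mathbf a)\,\mathbf a$ uniformly bounded; then the discrete Grönwall inequality with Lipschitz constant $\mathrm{Lip}(\mathbf a)$ over the $O((T-t)/\Delta s)$ steps upgrades this to a global $O(\Delta s)$ bound, with $C_\Phi(t,T)=C\,(T-t)e^{(T-t)\mathrm{Lip}(\mathbf a)}\,\|(D\mathbf a)\mathbf a\|_\infty$. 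One must be slightly careful that the induction keeps $x_k^i$ inside $\widetilde U_\ell$ (or a slightly enlarged compact set on which $\mathbf a$ is still Lipschitz), which follows as long as $\Delta s$ is small enough; alternatively one uses the global Lipschitz/linear-growth version from Proposition~\ref{prop_ex_flow} to avoid this bookkeeping entirely.

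Next I would treat $\omega_k$. Writing the exact equation~\eqref{syst_part2} and the scheme~\eqref{eq_discret_w}, subtract them and split the error $\omega_k(s_{i+1})-\omega_k^{i+1}$ into (a) the Euler truncation error for~\eqref{syst_part2}, controlled by $\tfrac12\|\omega_k''\|_\infty\Delta s^2\leq C_2(t,T)\varepsilon^{d-1}\Delta s^2$ via~\eqref{eq_bound_w''}; (b) the propagated error $(1+\Delta s(\dive\mathbf a)(x_k^i))(\omega_k(s_i)-\omega_k^i)$, giving a Grönwall factor $1+\Delta s\|\dive\mathbf a\|_\infty$; and (c) the error from evaluating $\dive\mathbf a$ at $x_k^i$ rather than $x_k(s_i)$, which is bounded by $\Delta s\,\mathrm{Lip}(\dive\mathbf a)\,|x_k(s_i)-x_k^i|\,\omega_k(s_i)\leq C\Delta s^2\varepsilon^{d-1}$ using~\eqref{eq_bound_x(s)-x^j} and the upper bound~\eqref{eq_bound_w}. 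Summing over $i$ and applying discrete Grönwall yields $|\omega_k(s_i)-\omega_k^i|\leq C_\omega(t,T)\Delta s^2\varepsilon^{d-1}$; the uniform factor $\varepsilon^{d-1}$ is inherited from the initial condition and never touched by the dynamics. The estimate for $\rho_k$ is identical in structure, now using equation~\eqref{syst_part3}, scheme~\eqref{eq_discret_rho}, the bounds~\eqref{eq_bound_rho} on $\rho_k''$, the Lipschitz regularity of $a_0$ and $S$ from Assumption~\ref{assump_reg}, the trajectory bound~\eqref{eq_bound_x(s)-x^j}, and the already-proved $\omega_k$ estimates to handle the source term $\Delta s\,\omega_k^i S(x_k^i)$.

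Finally, the uniform bounds on $|\omega_k^i|$ and $|\rho_k^i|$ follow immediately by the triangle inequality: $|\omega_k^i|\leq|\omega_k(s_i)|+|\omega_k(s_i)-\omega_k^i|\leq C_1(t,T)\Delta s\varepsilon^{d-1}+C_\omega(t,T)\Delta s^2\varepsilon^{d-1}$ (recall that after the rescaling at the end of Section~\ref{subsection_3} the weights carry an extra factor $\Delta s$, so $\omega_k(s_i)=O(\Delta s\,\varepsilon^{d-1})$ by~\eqref{eq_bound_w}), so one may take $C_\omega^0(t,T)=C_1(t,T)+C_\omega(t,T)\Delta s_0$ for $\Delta s\leq\Delta s_0$, and similarly $C_\rho^0(t,T)=C_3^0(t,T)+C_\rho(t,T)\Delta s_0$. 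I expect the main obstacle to be purely organisational rather than deep: keeping the discrete trajectories inside the domain where all the $W^{1,\infty}$/$W^{2,\infty}$ norms are finite (so that the constants truly do not depend on $\Delta s,\varepsilon,h$), and correctly bookkeeping the consistency error terms that couple the $x_k$, $\omega_k$ and $\rho_k$ equations — in particular ensuring that the $\varepsilon^{d-1}$ scaling factors propagate cleanly through the coupled Grönwall estimates. The one genuine subtlety is that the error in $\rho_k$ depends on the error in $\omega_k$ (through the source term), so the three estimates must be proved in the order $x_k\to\omega_k\to\rho_k$, feeding each bound into the next.
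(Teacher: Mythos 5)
Your proposal follows the same route as the paper: bound the second derivative of each exact quantity, write the one-step Taylor/consistency error for the explicit Euler scheme, split off the propagated error and the error from evaluating the coefficients at $x_k^i$ instead of $x_k(s_i)$ (controlled by the already-proved trajectory bound), and conclude by discrete Gr\"onwall, proving the estimates in the order $x_k\to\omega_k\to\rho_k$ exactly as the paper does. The only structural difference is at the very end: you obtain the uniform bounds on $|\omega_k^i|$ and $|\rho_k^i|$ by the triangle inequality from the continuous bounds of Lemma~\ref{lemma_bound_wk_rhok} plus the error estimates (which forces the extra hypothesis $\Delta s\le\Delta s_0$ and the stronger regularity), whereas the paper runs a direct discrete Gr\"onwall argument on the recursions \eqref{eq_discret_w}--\eqref{eq_discret_rho} themselves; both are fine.

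One bookkeeping point needs fixing. You quote the per-step truncation error for $\omega_k$ as $\tfrac12\|\omega_k''\|_\infty\Delta s^2\le C_2(t,T)\varepsilon^{d-1}\Delta s^2$, using \eqref{eq_bound_w''} for the \emph{unrescaled} weight; summing $O((T-t)/\Delta s)$ such terms only yields $O(\Delta s\,\varepsilon^{d-1})$, one power of $\Delta s$ short of the claimed $C_\omega\Delta s^2\varepsilon^{d-1}$. The resolution, which you invoke for the final uniform bounds but not in the middle of the argument, is that Lemma~\ref{lemma_euler_cv} concerns the rescaled quantities $\tilde\omega_k=\Delta s\,\omega_k$ and $\tilde\rho_k=\Delta s\,\rho_k$ (their initial data in \eqref{eq_syst_part_discret} carry the factor $\Delta s$), so Lemma~\ref{lemma_bound_wk_rhok} applied to them gives $|\omega_k''|\le C_2\,\Delta s\,\varepsilon^{d-1}$ and $\omega_k(s)\le C_1\,\Delta s\,\varepsilon^{d-1}$; the per-step errors are then $O(\Delta s^3\varepsilon^{d-1})$ and accumulate to the stated $O(\Delta s^2\varepsilon^{d-1})$. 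The same correction applies to your term (c) and to the $\rho_k$ estimate. With the rescaling applied consistently, your argument is the paper's proof.
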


\begin{proof} Suppose that ${\bf a} \in C^1(\widetilde{U}_\ell) \cap W^{1,\infty}(\widetilde{U}_\ell)$ and let $k\in\ZZ^{d-1}$. From equation~\eqref{vector_field}, one deduces that
$$
\displaystyle
x_k''(s)= \left({\bf a} \cdot \nabla \right) {\bf a}(x_k(s)),\quad |x_k''(s)|\leq \|{\bf a}\|_{L^{\infty}(\widetilde{U}_\ell)}\|\nabla {\bf a}\|_{L^\infty(\widetilde{U}_\ell)},
$$
where ${\bf a}.\nabla = \sum\limits_{i=1}^d { \bf a}_i \partial_{x_i}$.
By computing a Taylor's expansion of $x_k$ on the interval $[s_j, s_{j+1}]$, one gets:
$$
\displaystyle
x_k(s_{j+1}) = x_k(s_j) + \Delta s \, {\bf a}(x_k(s_j)) + \int_{s_j}^{s_{j+1}} x_k''(t)(s_{j+1}-t) dt.
$$
One then obtains the estimate:
\begin{equation*}
\displaystyle
|x_k(s_{j+1})-x_k^{j+1}|\leq 
\left(1+\Delta s\,C(d) \|{\bf a}\|_{W^{1,\infty}(\widetilde{U}_\ell)}\right) |x_k(s_j)-x_k^j| + C(d) \|{\bf a}\|^2_{W^{1,\infty}(\widetilde{U}_\ell)} \frac{\Delta s^2}{2}.
\end{equation*}
Assume that $0 \leq j \leq n_T$ and $x_k^0 = x_k(0)$. By applying a discrete Gronwall argument, one finds that
$$
\displaystyle
\max_{0\leq j\leq n_T} |x_k(s_j)-x_k^j| \leq 
C(d) \|{\bf a}\|^2_{W^{1,\infty}(\widetilde{U}_\ell)} \frac{e^{C(d)\|{\bf a}\|_{W^{1,\infty}(\widetilde{U}_\ell)}T}-1}{2} \Delta s.
$$
By using a similar argument for $n_t\leq j\leq 0$, we get the error estimate \eqref{eq_bound_x(s)-x^j} if we define $C_\Phi(t,T)$ as:
\begin{align}
    \displaystyle
    C_\Phi(t,T) = C(d) \|{\bf a}\|^2_{W^{1,\infty}(\widetilde{U}_\ell)} \frac{e^{C(d)\|{\bf a}\|_{W^{1,\infty}(\widetilde{U}_\ell)} (T-t)}-1}{2}.
\label{eq_lemma_bound_xk_cst}
\end{align}\\

Now we suppose that ${\bf a}, a_0,S$ satisfy the assumption \ref{assump_reg} and we compute an error estimate on 
$e_k^j := \omega_k(s_{j}) - \omega_k^j$, for $n_t \leq j \leq n_T$. From equation~\eqref{syst_part2} and by applying Taylor's theorem, we get:
\begin{align}
    \omega_k(s_{j}) &= \omega_k(s_{j-1}) (1+ \Delta s \, (\dive{\bf a}) (x_k(s_{j-1}))+\Delta s^2\int_0^1 (1-t)\omega_k''(s_{j-1}+t\Delta s) dt.
    \label{eq_taylor_w}
\end{align}
By applying lemma \ref{lemma_bound_wk_rhok}, one obtains
\begin{align*}
    \left|\int_0^1 (1-t)\omega_k''(s_{j-1}+t\Delta s) dt\right| := \left|E^{j-1}_k\right| \leq \underset{u \in [t,T]}{sup} |\omega_k''(u)| \leq C_2(t,T) \Delta s \varepsilon^{d-1}.
\end{align*}
Then, by combining equation~\eqref{eq_discret_w} and \eqref{eq_taylor_w}, one obtains
\begin{align*}
    e_k^{j} &= e_k^{j-1} + \Delta s \left( (\dive {\bf a})(x_k(s_{j-1})) \omega_k(s_{j-1}) - (\dive {\bf a})(x_k^{j-1}) \omega_k^{j-1} \right) + \Delta s^2 E_k^{j-1}\\
    & = e_k^{j-1} + \Delta s \left[ \left( (\dive {\bf a})(x_k(s_{j-1})) - (\dive {\bf a})(x_k^{j-1}) \right) \omega_k(s_{j-1}) - (\dive {\bf a})(x_k^{j-1}) \left( \omega_k(s_{j-1}) - \omega_k^{j-1} \right) \right]\\ 
    & \qquad \quad \, + \Delta s^2 E_k^{j-1}\\
    &= e_k^{j-1} (1 - \Delta s (\dive {\bf a})(x_k^{j-1})) + \Delta s \left( (\dive {\bf a})(x_k(s_{j-1})) - (\dive {\bf a})(x_k^{j-1}) \right) \omega_k(s_{j-1}) + \Delta s^2 E_k^{j-1}.
\end{align*}
As a consequence, by applying Lemma \ref{lemma_bound_wk_rhok}, one deduces that:
\begin{align*}
    |e_k^{j}| & \leq 
    |e_k^{j-1}| \left(1+ \Delta s \|{\bf  a}\|_{W^{1,\infty}(\widetilde{U}_\ell)} \right) + \left(C_\Phi(t,T) C_1(t,T) \| {\bf a}\|_{W^{2,\infty}(\widetilde{U}_\ell)} + C_2(t,T)\right) \Delta s^3 \varepsilon^{d-1}.
\end{align*}
Consequently, by using a discrete Gronwall argument and $e_k^0$, one obtains:
%induction for $0 \leq j \leq n_T$ and since $e_k^0=0$ we obtain:
\begin{align*}
    |e_k^j| & \leq \left(C_\Phi(t,T) C_1(t,T) \| {\bf a}\|_{W^{2,\infty}(\widetilde{U}_\ell)}  + C_2(t,T)\right) \Delta s^2 \varepsilon^{d-1} \frac{e^{\|{\bf a}\|_{W^{1,\infty}(\widetilde{U}_\ell)} T}-1}{\|{\bf a}\|_{W^{1,\infty}(\widetilde{U}_\ell)}}.
\end{align*}
One proceeds similarly for $n_t \leq j \leq 0$ and gets the first estimate of \eqref{est_cv_w_rho} by setting
%on $e_k^j, \, n_t \leq j \leq n_T$ by setting
$$
\displaystyle
C_\omega(t,T) = \left(C_\Phi(t,T) C_1(t,T) \| {\bf a}\|_{W^{2,\infty}(\widetilde{U}_\ell)}  + C_2(t,T)\right) \frac{e^{\|{\bf a}\|_{W^{1,\infty}(\widetilde{U}_\ell)} (T-t)}-1}{\|{\bf a}\|_{W^{1,\infty}(\widetilde{U}_\ell)}}.
$$\\

Now we consider the error estimate $l_k^j := \rho_k(s_{j}) - \rho_k^j$, for $n_t \leq j \leq n_T$. By using equation~\eqref{syst_part3} and computing a Taylor expansion of $\rho_k$ on the interval $[s_j, s_{j+1}]$, one finds:
\begin{align}
    \rho_k(s_{j}) &= \rho_k(s_{j-1}) - \Delta s \, a_0(x_k(s_{j-1})) \rho_k(s_{j-1}) + \Delta s \, \omega(x_k(s_{j-1})) S(x_k(s_{j-1})) \label{eq_taylor_rho}\\
    & \quad + \Delta s^2 \int_0^1 (1-t)\rho_k''(s_{j-1}+t\Delta s) dt. \nonumber
\end{align}
From Lemma \ref{lemma_bound_wk_rhok}, one deduces that
\begin{align*}
    \left|\int_0^1 \rho_k''(s_{j-1}+t\Delta s) dt \right|:= L^{j-1}_k \leq \underset{s \in [t,T]}{sup} |\rho_k''(s)| \leq C_3^2(t,T) \Delta s \varepsilon^{d-1}.
\end{align*}
Then, by combining equation~\eqref{eq_discret_rho} and \eqref{eq_taylor_rho}, one has:
\begin{align*}
    |l_k^j| &\leq \Big| l_k^{j-1} - \Delta s \left(a_0(x_k(s_{j-1})) \rho_k(s_{j-1}) - a_0(x_k^{j-1}) \rho_k^{j-1} \right)\Big|\\
    &\quad+\Big|\Delta s \left(S(x_k(s_{j-1})) \omega_k(s_{j-1}) - S(x_k^{j-1}) \omega_k^{j-1} \right)\Big| + \Delta s^2 L_k^{j-1} \\
    & \leq |l_k^{j-1}| \left(1+\Delta s \|a_0\|_{L^\infty(\widetilde{U}_\ell)} \right) + \Delta s^2 C_\Phi(t,T) \|\nabla a_0\|_{L^\infty(\widetilde{U}_\ell)} |\rho_k(s_{j-1})| \\
    & \quad + \Delta s^2 C_\Phi(t,T) |\omega_k(s_{j-1})| \|\nabla S\|_{L^\infty(\widetilde{U}_\ell)} + \Delta s |\omega_k(s_{j-1}) - \omega_k^{j-1} | \|S\|_{L^\infty(\widetilde{U}_\ell)} + C_3^2(t,T) \Delta s^3 \varepsilon^{d-1}\\
    & \leq \left(1+\Delta s \|a_0\|_{L^\infty(\widetilde{U}_\ell)} \right) |l_k^{j-1}| 
    + C_4(t,T)\Delta s^3 \varepsilon^{d-1},
       %\Big( C_\Phi(t,T) \|a_0\|_{W^{1,\infty}(\widetilde{U}_\ell)} C_3^0(t,T)\\ 
    %& \quad +\|S\|_{W^{1,\infty}(\widetilde{U}_\ell)} (C_\Phi(t,T) C_1(t,T) + C_{\omega}(t,T)) + C_3^2(t,T)\Big) \Delta s^3 \varepsilon^{d-1}.
\end{align*}
with 
$$
\displaystyle
C_4(t,T)=\|a_0\|_{W^{1,\infty}(\widetilde{U}_\ell)} C_\Phi(t,T) C_3^0(t,T) + \|S\|_{W^{1,\infty}(\widetilde{U}_\ell)}(C_\Phi(t,T) C_1(t,T) + C_{\omega}(t,T)) + C_3^2(t,T).
$$
Consequently, by the same computations as above, and by $l_k^0=0$, one obtains
\begin{equation}
\displaystyle 
|l_k^j|  \leq C_4(t,T)\frac{e^{\|a_0\|_{L^\infty(\widetilde{U}_\ell)}(T-t)} -1}{\|a_0\|_{L^\infty(\widetilde{U}_\ell)}}\Delta s^2\varepsilon^{d-1}:=C_{\rho}(t,T) \Delta s^2 \varepsilon^{d-1}.
\label{eq_lemma_bound_rhok_cst}
\end{equation}\\

%with:
%\begin{align} 
%    \displaystyle
%    C_{\rho}(t,T) =& \left(\|a_0\|_{W^{1,\infty}(\widetilde{U}_\ell)} C_\Phi(t,T) C_3^0(t,T) + \|S\|_{W^{1,\infty}(\widetilde{U}_\ell)}(C_\Phi(t,T) C_1(t,T) + C_{\omega}(t,T)) + C_3^2(t,T)\right) \nonumber\\
 %   & \qquad \times \frac{e^{\|a_0\|_{L^\infty(\widetilde{U}_\ell)}(T-t)} -1}{\|a_0\|_{L^\infty(\widetilde{U}_\ell)}}. 
 %   \label{eq_lemma_bound_rhok_cst}
%\end{align}

Finally, we derive estimates for $\omega_k^j$ and $\rho_k^j$ with $k\in\ZZ^{d-1}$ and $j\in[n_t,\,n_T]$: it is a straightforward computation to show that
$$
\displaystyle
|\omega_k^{j+1}| \leq \left(1+\Delta s\|{\bf a}\|_{W^{1,\infty}(\widetilde{U}_\ell)}\right)|\omega_k^j|.
$$
As a result, one obtains:
\begin{align*}
\displaystyle
|\omega_k^j|&\leq \exp \left((T-t)\|{\bf a}\|_{W^{1,\infty}(\widetilde{U}_\ell)}\right) \omega_k^0 \\
&\leq \exp \left((T-t)\|{\bf a}\|_{W^{1,\infty}(\widetilde{U}_\ell)}\right) \|{\bf a}\|_{L^{\infty}(\widetilde{U}_\ell)} \Delta s \varepsilon^{d-1} := C_\omega^0(t,T) \Delta s \varepsilon^{d-1}.
\end{align*}\\

We finish the proof of this proposition by deriving an estimate on $\rho_k^j$. One has
$$
\displaystyle
\rho_k^{j+1}\leq (1+\Delta s\|a_0\|_{L^{\infty}(\widetilde{U}_\ell)})\rho_k^j+\|S\|_{L^{\infty}(\widetilde{U}_\ell)} C_\omega^0(t,T) \Delta s^2 \varepsilon^{d-1}.
$$
By a discrete Gronwall argument, one obtains $\rho_k^j\leq C_\rho^0(t,T) \Delta s \varepsilon^{d-1}$ with
\begin{align}
    \displaystyle
    C^0_\rho(t,T) = \|S\|_{L^{\infty}(\widetilde{U}_\ell)} C_\omega(t,T) \frac{e^{\|a_0\|_{L^\infty(\widetilde{U}_\ell)} (T-t) - 1}}{\|a_0\|_{L^\infty(\widetilde{U}_\ell)}}.
    \label{eq_lemma_bound_rhokj_cst}
\end{align}
\end{proof}

%\noindent The following Lemma give a bound for the discretised functions $\omega_k^j$ and $\rho_k^j$.
%\begin{lemma}
%Suppose that hypothesis of Lemmas \ref{lemma_bound_wk_rhok} are satisfied. Then for $T \geq l$, there exists a constant $C(T) >0$ independent of $\Delta s$, $\varepsilon$ and $h$ such that, for $k \in \ZZ^{d-1}$ and $-n_l \leq i \leq n := \left\lfloor \frac{T}{\Delta s} \right\rfloor$:
%\begin{align}
%    & |\omega_k^i| \leq C(T) \Delta s \varepsilon^{d-1},\\
%    &|\rho_k^i| \leq C(T) \Delta s \varepsilon^{d-1}.
%\end{align}
%\label{lemma_bound_rho}
%\end{lemma}

%\begin{proof}
%    First, if $k \in \ZZ^{d-1}$ and $-n_l \leq i \leq n$,
 %   \begin{align*}
%        \omega_k^i &= \omega_k^{j-1} (1 + \Delta s (\dive {\bf a}) \left(x_k^j \right))
 %       \leq  \omega_k^{0} (1 + \frac{T}{n} \|\nabla %a\|_{L^\infty(\tilde U)})^{i}\\
 %       &\leq \omega_k^0 \exp(T  \|\nabla a\|_{L^\infty(\tilde U)}) \leq C(T) \Delta s \varepsilon^{d-1}.
 %   \end{align*}
%    Then, for $\rho_k$:
%    \begin{align*}
%        |\rho_k^i| &= \rho_k^{j-1} (1 - \Delta s a_0(x_k^{j-1})) + \Delta s \omega_k^{j-1} S(x_k^{i-1})\\
%        &\leq \rho_k^{0} \left(1 + \frac{T}{n} \|a_0\|_{L^\infty(\tilde U)} \right)^i + \Delta s \omega_k^{i-1} \|S\|_{L^\infty(\tilde U)} \sum\limits_{k=0}^{i-1} \left(1 + \frac{T}{n} \|a_0\|_{L^\infty(\tilde U)} \right)^{k}\\
 %       & \leq C(T) \Delta s \varepsilon^{d-1},
 %   \end{align*}
 %   because $\rho_k^0 = \omega_k^0 u_k^0 \leq \omega_k^0 \|g\|_{L^\infty(\RR^{d-1})}$.
 %   
%\end{proof}

\subsection{The main theorem} \label{subsection_4}

\noindent In this section, we prove the convergence of our numerical scheme.
\begin{theorem}
    Let $d\in\mathbb{N}$ such that $d\geq 2$. Suppose that the following conditions are satisfied:
    \begin{enumerate}
        \item Smoothness of data: ${\bf a}$ and $a_0,S,g$ satisfy assumption \ref{assump_reg} and we suppose that 
        $$
        \displaystyle       
        {\bf a}\in C^{d+1}(\widetilde{U}_\ell), \quad g \in C^{d}(\widetilde{U}_\ell),\quad a_0, \, S \in C^d(\widetilde{U}_\ell),\quad \zeta \in C^{d}(\RR).
        $$ 
        %\item Assumption~\ref{assump_reg} is verified, %${\bf a} \in W^{2,\infty}(\widetilde{U}_\ell)$, $a_0, \,S \in W^{1,\infty}(\widetilde{U}_\ell)$, $g \in L^\infty(\RR^{d-1})$  ,      
        \item Characteristic Flow: the flow $\Phi $ satisfies Assumption \ref{assump_flow}. Moreover, we suppose that  
        $$
        \displaystyle
        \nabla \left(\Phi^{-1} \right) \in L^\infty (\widetilde{U}_\ell).
        $$
        \item Numerical parameters: $0\leq h\leq \delta$ and $\varepsilon,\Delta s>0$. Moreover, we assume 
        $$\Delta s=\mathcal{O}_{\varepsilon\to 0}(\varepsilon),\qquad \varepsilon=\mathcal{O}_{h\to 0}(h).
        $$
        %There exists a constant $A>0$ such that $\Delta s < A \varepsilon$.
        %\item There exists a constant $B >0$ such that $\varepsilon < B h$.
    \end{enumerate}
    Then, for all compact sets $X \subset \RR^d_+$, there exists $t_X < 0 < T_X$ and 
    a compact set $\Xi_X\subset\RR^{d-1}$
    such that $$\displaystyle \bigcup_{x \in X} \overline{B}_d(x,\delta) \subset \Phi((t_X,T_X) \times \Xi_X).$$ 
    Moreover, there exist constants $C_X^{(0)}, \dots, C_X^{(3)} >0$ which do not depend on $h$, $\varepsilon$, $\Delta s$ such that, if $C_X^{(0)}\,\Delta s  \leq h,$
    the approximation $\widetilde u^{\varepsilon,\Delta s}_h$ defined by equation~\eqref{eq_approx_u_h_epsilon_ds} verifies:
    \begin{align}
        \underset{x \in X}{\sup} |u(x) - \tilde u^{\varepsilon,\Delta s}_h(x)| 
        \leq & C_X^{(1)} h^2 + C_X^{(2)} \frac{\varepsilon^{d}}{h^{d}} + C_X^{(3)} \frac{\Delta s}{h}.
        \label{eq_error_scheme}
    \end{align}
    \label{th_cv_scheme}
\end{theorem}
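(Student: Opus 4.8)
The plan is to split the total error $|u(x)-\tilde u^{\varepsilon,\Delta s}_h(x)|$ into three pieces by a triangle inequality, each matching one term of \eqref{eq_error_scheme}, and to control each piece uniformly over the compact set $X$. Write
\[
u(x)-\tilde u^{\varepsilon,\Delta s}_h(x)
= \big(u(x)-u\ast\zeta^h(x)\big)
+ \big(u\ast\zeta^h(x)-\Pi_h^{\varepsilon,\Delta s}(u)(x)\big)
+ \big(\Pi_h^{\varepsilon,\Delta s}(u)(x)-\tilde u^{\varepsilon,\Delta s}_h(x)\big),
\]
where $\Pi_h^{\varepsilon,\Delta s}(u)$ is the semi-discrete object defined in \eqref{eq_approx_u_def}. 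The first bracket is the interpolation error, already bounded by $C(x,\delta)h^2$ via Lemma~\ref{diff-u-uh} and its remark; taking the sup over $X$ and using compactness gives the term $C_X^{(1)}h^2$. Before doing any of this I would first fix the compact set $X$, produce the interval $(t_X,T_X)$ and the compact $\Xi_X\subset\RR^{d-1}$ with $\bigcup_{x\in X}\overline B_d(x,\delta)\subset\Phi((t_X,T_X)\times\Xi_X)$: this follows because $\Phi$ is a $C^1$-diffeomorphism from $\Omega_\ell$ onto $\widetilde U_\ell\supset U_\delta\supset\bigcup_{x\in X}\overline B_d(x,\delta)$ (the latter inclusion since $h\le\delta$ and $X\subset\RR^d_+$), so $\Phi^{-1}$ of that compact set is compact and sits inside some box $(t_X,T_X)\times\Xi_X$. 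All constants below are then allowed to depend on $X$ through $t_X,T_X,\Xi_X$.

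For the second bracket (the quadrature error) I would apply Theorem~\ref{th_quadrature_Raviard} in the $\xi$-variable and a one-dimensional rectangle-rule estimate in the $s$-variable. The key point is that, for fixed $x\in X$, the integrand $(s,\xi)\mapsto \rho(s,\xi)\zeta^h(x-\Phi(s,\xi))$ (with $\rho=\omega u$) is supported in $s\in(t_X,T_X)$, $\xi\in\Xi_X$ because $\zeta^h$ is supported in $\overline B_d(x,h)\subset\overline B_d(x,\delta)$ and that ball pulls back into $(t_X,T_X)\times\Xi_X$. On that bounded region the $W^{m,1}$-norm of the integrand in $\xi$ (with $m=d$, so $m>d-1$ as Theorem~\ref{th_quadrature_Raviard} requires in dimension $d-1$) is controlled: each $\xi$-derivative either hits $\rho$ — bounded using the smoothness hypotheses on the data and the $C^{d+1}$-regularity of $\Phi$ (Proposition~\ref{prop_caract_domain}) — or hits $\zeta^h(x-\Phi(s,\xi))$, producing a factor $h^{-1}$ per derivative via the chain rule and $\nabla(\Phi^{-1})\in L^\infty$, together with the $C^{d}$-regularity of $\zeta$. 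Each application of the quadrature theorem costs $\varepsilon^{m}\cdot\|\cdot\|_{W^{m,1}}\lesssim \varepsilon^{d}\cdot h^{-(d-1)}$ for the $\xi$-quadrature, and the $\Delta s$-quadrature in $s$ costs $\lesssim (\Delta s)^{?}\cdot h^{-1}$; after summing over the $O(\varepsilon^{-(d-1)})$ active values of $k$ and dividing out one power of $\varepsilon^{d-1}$ already present in $\omega_k$, one is left with $C_X^{(2)}\varepsilon^{d}/h^{d}$ from the transverse quadrature plus a contribution absorbed into the last term. (One should be a little careful bookkeeping the $s$-direction rectangle rule versus the $\xi$-direction Raviart quadrature; the cleanest route is to first discretize in $\xi$ only — formula \eqref{u-int-xi} — estimate that error by Theorem~\ref{th_quadrature_Raviard}, then estimate the remaining $s$-integral discretization by an elementary composite rectangle bound using $\partial_s\big(\rho_k(s)\zeta^h(x-x_k(s))\big)$, which again costs $h^{-1}$ and whose $L^1_s$-norm over $[t_X,T_X]$ is $O(\varepsilon^{d-1}h^{d-1})$ by the support argument, so the sum over $k$ gives $O(\Delta s/h)$.)

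For the third bracket (the Euler error) I would use Lemma~\ref{lemma_euler_cv}: one has $\tilde u^{\varepsilon,\Delta s}_h(x)=\sum_{j,k}\rho_k^j\zeta^h(x-x_k^j)$ while $\Pi_h^{\varepsilon,\Delta s}(u)(x)=\sum_{j,k}\rho_k(s_j)\zeta^h(x-x_k(s_j))$, so the difference is
\[
\sum_{j,k}\Big[\big(\rho_k^j-\rho_k(s_j)\big)\zeta^h(x-x_k^j)
+\rho_k(s_j)\big(\zeta^h(x-x_k^j)-\zeta^h(x-x_k(s_j))\big)\Big].
\]
The first part is bounded using $|\rho_k^j-\rho_k(s_j)|\le C_\rho(t_X,T_X)(\Delta s)^2\varepsilon^{d-1}$ and $|\zeta^h|\le\|\zeta\|_\infty h^{-d}$; the second uses the mean-value inequality $|\zeta^h(x-x_k^j)-\zeta^h(x-x_k(s_j))|\le \|\nabla\zeta^h\|_\infty|x_k^j-x_k(s_j)|\le C h^{-d-1}\cdot C_\Phi(t_X,T_X)\Delta s$ together with $|\rho_k(s_j)|\le C_3^0(\Delta s)\varepsilon^{d-1}$ (note the extra $\Delta s$ coming from the rescaled initial data). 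In both parts the number of nonzero $(j,k)$ with $x_k^j\in\overline B_d(x,h)$ must be counted: it is $O\big((h/\Delta s)\cdot(h/\varepsilon)^{d-1}\big)$ — here is where the hypothesis $C_X^{(0)}\Delta s\le h$ enters, guaranteeing enough particles per kernel width so the count is the expected one and, with $\Delta s=O(\varepsilon)$, that $h/\varepsilon\gtrsim 1$. Multiplying count $\times$ per-particle bound and cancelling the surplus powers of $\Delta s\,\varepsilon^{d-1}$ yields $C_X^{(3)}\Delta s/h$.

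\textbf{Main obstacle.} The delicate step is the quadrature estimate: one must track exactly how many $h^{-1}$ factors the chain rule produces when differentiating $\zeta^h(x-\Phi(s,\xi))$, verify that $d$ derivatives (enough for Theorem~\ref{th_quadrature_Raviard} in dimension $d-1$) are available from the stated $C^{d}$-smoothness of $\zeta$ and $C^{d+1}$-smoothness of $\Phi$, and then do the bookkeeping of the $\varepsilon^{d-1}$ sum over $k$ against the $\varepsilon^{d-1}$ already inside $\omega_k$ so that the net power is $\varepsilon^{d}/h^{d}$ and not something worse. The counting of active particles per kernel — and the precise role of the constraint $C_X^{(0)}\Delta s\le h$ in making that count tight — is the second place where care is needed; everything else is an assembly of Lemmas \ref{diff-u-uh}, \ref{lemma_bound_wk_rhok} and \ref{lemma_euler_cv} via the triangle inequality and compactness of $X$.
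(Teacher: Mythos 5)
Your proposal is correct and follows the same architecture as the paper's proof: the same localisation of $\bigcup_{x\in X}\overline{B}_d(x,\delta)$ via Lemma~\ref{lemma_x_ball}, the same decomposition into interpolation, quadrature and Euler errors, the same use of Lemmas~\ref{diff-u-uh}, \ref{lemma_bound_wk_rhok} and \ref{lemma_euler_cv}, the same splitting of the Euler error into a $\rho$-part and a $\zeta^h$-part, and the same particle-counting argument --- with $C_X^{(0)}\Delta s\leq h$ serving precisely to force $E_h(x)\subset E^1_{2h}(x)$, i.e.\ to bound the number of \emph{discrete} particles in $\overline{B}_d(x,h)$ by the number of \emph{exact} particles in $\overline{B}_d(x,2h)$, which is then estimated through $\Phi^{-1}$ and the lattice spacings; your reading of that hypothesis is right. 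Two organisational differences are worth recording. First, for the quadrature error the paper rescales $s=\frac{\Delta s}{\varepsilon}t$ so that the $(\Delta s,\varepsilon)$-lattice becomes a uniform $\varepsilon$-lattice in $d$ variables and applies Theorem~\ref{th_quadrature_Raviard} once in dimension $d$, controlling the resulting $(\Delta s/\varepsilon)^{\alpha_1}$ factors by $\Delta s\leq A\varepsilon$; you instead quadrature in $\xi$ alone (dimension $d-1$, where $m=d>d-1$ cleanly satisfies the theorem's hypothesis) and treat the $s$-direction by a composite rectangle rule whose contribution is absorbed into the $\Delta s/h$ term. Both bookkeepings close; note only that your intermediate power count ($\varepsilon^d h^{-(d-1)}$ followed by ``summing over the active $k$'') is muddled --- the quadrature error is already the integral minus the sum, so there is no residual sum over $k$ to perform --- whereas the parenthetical ``cleanest route'' you describe is the correct one: the support of $\zeta^h(x-\Phi(\cdot))$ pulls back to a set of measure $O(h^d)$ by $\nabla(\Phi^{-1})\in L^\infty$, and $d$ derivatives of $\zeta^h$ cost $h^{-d}$ on top of $\|\zeta^h\|_{L^1}=1$, giving $\varepsilon^d/h^d$ directly. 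Second, for $F^\rho_{h,\Delta s}$ the paper avoids counting particles by writing $\rho_k(s_j)-\rho_k^j=\frac{\rho_k(s_j)-\rho_k^j}{\omega_k(s_j)}\,\omega_k(s_j)$ and bounding $\sum_{j,k}\omega_k(s_j)\zeta^h(x-x_k(s_j))$ by $1+O(\varepsilon^d/h^d)$ (a second application of the quadrature theorem to $\int\zeta^h=1$), whereas you reuse the same direct count as for $F^\zeta_{h,\Delta s}$; the powers of $\Delta s\,\varepsilon^{d-1}$ cancel identically in both versions and yield $C\Delta s\leq C\delta\,\Delta s/h$, so either route is acceptable.
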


\begin{remark}
The error term is split into three contributions:
\begin{enumerate}[(i)]
\item The interpolation error: $u(x)-u\ast\zeta^h(x)$.
\item The quadrature error $u\ast\zeta^h(x)-\Pi_h^{\varepsilon,\Delta s}(u)(x)$.
\item The ``time'' discretisation error $\Pi_h^{\varepsilon,\Delta s}(u)(x)-\widetilde{u}_h^{\varepsilon,\Delta s}(x)$ (stemming from the discretization of the characteristic equations). 
\end{enumerate}
%The first term of the estimate \eqref{eq_error_scheme} is an interpolation error, coming from the convolution of $u$ with the kernel $\zeta^h$. The second term is the quadrature error, caused by the discretization of the integral in the variables $\xi$ and $s$. The last term is the discretization error in time, coming from the time discretization in the variable $s$ of the ordinary differential equations on $\rho_k$ and $x_k$.
\label{remark_terms_th}
\end{remark}

The first assumption of the theorem implies that the flow $\Phi\in C^{d}(\Omega_\ell)$ and that the solution $u\in C^{d}(\widetilde{U}_\ell)$. Now, we consider the first statement of the theorem.  Let us first prove the following lemma.

\begin{lemma} 
    Let $x \in \RR^d_+$, and suppose that $U_\delta=(-\delta, \infty)\times \RR^{d-1}\subset\Phi(\Omega_\ell)$. Then there exists $-\ell < t_x <T_x$, and $\xi  \in \RR^{d-1}, \, \gamma_x >0$ such that
    $$\overline{B}_d(x,\delta) \subset \Phi \left([t_x,T_x] \times \overline{B}_{d-1}(\xi_x, \gamma_x) \right),$$
    (see Figure~\ref{fig_lemma_x_ball} for an illustration).
    \label{lemma_x_ball}
\end{lemma}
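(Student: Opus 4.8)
The plan is to use the fact that $\Phi:\Omega_\ell\to\widetilde U_\ell$ is a $C^1$-diffeomorphism (Assumption \ref{assump_flow}(i)) together with the inclusion $\overline{B}_d(x,\delta)\subset U_\delta\subset\widetilde U_\ell$ to pull the closed ball back through $\Phi^{-1}$ and then enclose the pullback in a product set of the required form. First I would set $K:=\Phi^{-1}\big(\overline{B}_d(x,\delta)\big)\subset\Omega_\ell$. Since $\overline{B}_d(x,\delta)$ is compact and contained in $\widetilde U_\ell$, and $\Phi^{-1}$ is continuous on $\widetilde U_\ell$, the set $K$ is a compact subset of $\Omega_\ell=(-\ell,+\infty)\times\RR^{d-1}$.

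Next I would project $K$ onto its two factors: let $\pi_s,\pi_\xi$ denote the projections onto the $s$-coordinate and the transverse $\xi$-coordinates respectively. Then $\pi_s(K)\subset(-\ell,+\infty)$ is compact, so it is contained in some interval $[t_x,T_x]$ with $-\ell<t_x\le T_x$; and $\pi_\xi(K)\subset\RR^{d-1}$ is compact, hence contained in some closed ball $\overline{B}_{d-1}(\xi_x,\gamma_x)$ for a suitable centre $\xi_x$ and radius $\gamma_x>0$. By construction $K\subset[t_x,T_x]\times\overline{B}_{d-1}(\xi_x,\gamma_x)\subset\Omega_\ell$, and applying $\Phi$ to this inclusion gives
\begin{equation*}
\overline{B}_d(x,\delta)=\Phi(K)\subset\Phi\big([t_x,T_x]\times\overline{B}_{d-1}(\xi_x,\gamma_x)\big),
\end{equation*}
which is exactly the claimed statement. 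One should only check that the product set $[t_x,T_x]\times\overline{B}_{d-1}(\xi_x,\gamma_x)$ indeed lies inside the domain $\Omega_\ell$ on which $\Phi$ is defined; this follows because $t_x>-\ell$ and $\Omega_\ell$ is a half-space product, so it contains every such bounded box with $s$-coordinates bounded below by $-\ell$.

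There is essentially no hard step here: the argument is a routine combination of compactness, continuity of $\Phi^{-1}$, and the product structure of $\Omega_\ell$. The only mild point requiring care is making sure the lower endpoint $t_x$ can be chosen strictly greater than $-\ell$ — this is guaranteed because $K$ is compact and contained in the \emph{open} half-space $(-\ell,+\infty)\times\RR^{d-1}$, so $\pi_s(K)$ is a compact subset of $(-\ell,+\infty)$ and is therefore bounded away from $-\ell$. For the later quantitative estimates one may in fact want $t_x,T_x,\xi_x,\gamma_x$ to depend measurably/continuously on $x$, but for the present lemma pointwise existence suffices, and the uniform version over a compact $X$ stated in Theorem \ref{th_cv_scheme} is obtained afterwards by a standard covering argument.
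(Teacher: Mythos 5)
Your argument is correct and is essentially identical to the paper's own proof: both pull $\overline{B}_d(x,\delta)$ back through the diffeomorphism to obtain a compact subset of $\Omega_\ell$, enclose it in a product set $[t_x,T_x]\times\overline{B}_{d-1}(\xi_x,\gamma_x)$, and push forward again. Your remark that compactness is obtained via continuity of $\Phi^{-1}$ (rather than of $\Phi$, as the paper loosely states) is in fact the more precise justification.
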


\begin{figure}[ht]
    \center
     \begin{tikzpicture}[scale = 1]
     \node (0,0) {\includegraphics[width=0.5\textwidth]{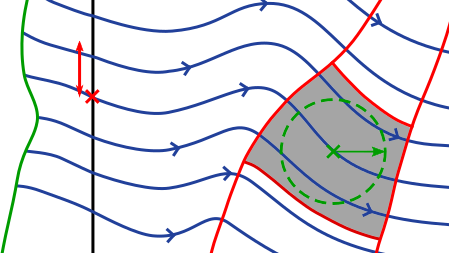}};
     \draw[red] (-1.8,0.) node {$\xi$};
     \draw[red] (-3,1) node {$\gamma$};
     
     \draw[ForestGreen] (2,-0.8) node {$x$};
     \draw[ForestGreen] (2.5,-0.2) node {$\delta$};
     
     \draw[black] (-2,2.9) node {$\RR^{d-1}$};
     \end{tikzpicture}
     
     \caption{Illustration of Lemma \ref{lemma_x_ball}. The set $\overline{B}_d(x,\delta)$ is represented by the green circle. The set $\Phi \left([t_x,T_x] \times \overline{B}_{d-1}(\xi_x,\gamma_x) \right)$ is in grey.}
     \label{fig_lemma_x_ball}
\end{figure}

\begin{proof}
    The set $\overline{B}_d(x,\delta)$ is a compact subset of $\RR^d$ included in $\widetilde{U}_\ell=\Phi(\Omega_\ell)$, and $\Phi : \Omega_\ell \to \widetilde{U}_\ell$ is continuous. Thus $\Phi^{-1}(\overline{B}_d(x,\delta))$ is a compact subset of $\RR^d$ included in $\Omega_\ell$. As a consequence there exists $-\ell < t_x < T_x$, and $\xi_x \in \RR^{d-1}$, $\gamma_x > 0$ such that:
    $$\Phi^{-1} (\overline{B}_d(x,\delta)) \subset [t_x,T_x] \times \overline{B}_{d-1}(\xi,\gamma).$$
    Since $\Phi$ is bijective, this shows the result.
\end{proof}

The proof of the first statement of the theorem is then straightforward: let $K\subset\RR^{d}_+$ be a compact set. For all $x\in K$, there exist $-\ell<t_x<T_x$, $\xi_x\in\RR^{d-1},\gamma_x>0$ such that $\overline{B}_d(x,\delta)\subset\Phi((t_x,T_x)\times B_{d-1}(\xi_x,\gamma_x))$. We then choose 
$$
\displaystyle
t_X=\min_{x\in K}(t_x),\quad T_X=\max_{x\in K}(T_x),\quad \Xi_X=\bigcup_{x\in K}B_{d-1}(\xi_x,\gamma_x)
$$
and we get 
$$\displaystyle \bigcup_{x \in X} \overline{B}_d(x,\delta) \subset \Phi((t_X,T_X) \times \Xi_X).$$ 
\noindent Now we prove the convergence of our numerical scheme.

\begin{proof}
    Let $x\in\RR^d_+$. We first split the difference $u(x)-\tilde u^{\varepsilon,\Delta s}_h(x)$  into four terms:
    \begin{align*}
        u(x) - \tilde u^{\varepsilon,\Delta s}_h(x) =& \,
        u(x) - \left(u \ast \zeta^h \right)(x) \\
        & + \left(u \ast \zeta^h \right)(x)
        - \sum\limits_{j=n_\ell}^{+ \infty} \sum\limits_{k \in \ZZ^{d-1}} \rho_k(s_j) \zeta^h(x-x_k(s_j))\\
        & + \sum\limits_{j=n_\ell}^{+ \infty} \sum\limits_{k \in \ZZ^{d-1}} \left( \rho_k(s_j) - \rho_k^j \right) \zeta^h(x-x_k(s_j))\\
        &+ \sum\limits_{j=n_\ell}^{+ \infty} \sum\limits_{k \in \ZZ^{d-1}} \rho_k^j \left( \zeta^h(x-x_k(s_j)) - \zeta^h \left(x-x_k^j \right) \right) \nonumber\\
        & := I_h(x) + Q_{h,\varepsilon,\Delta s}(x)+ F_{h,\Delta s}^\rho + F_{h,\Delta s}^\zeta(x). \nonumber
    \end{align*}
As explained in Remark \ref{remark_terms_th}, $I_h$ is the interpolation error, $Q_{h,\varepsilon,\Delta s}(x)$ is the quadrature error, and $F_{h,\Delta s}^\rho$ and $F_{h,\Delta s}^\zeta$ are related to the ``time'' discretization errors of the characteristic differential equations. \\

We first consider the interpolation error $I_h(x)$. By applying Lemma \ref{diff-u-uh}, one has
    $$
    \displaystyle
    |I_h(x)|=|u(x)-u\ast\zeta^h(x)|\leq C(x,\delta)h^2, \quad C(x,\delta)=C(d)\|u\|_{W^{2,\infty}(B_d(x,\delta))}.
    $$
As a result, we obtain
$$
\displaystyle
\max_{x\in X}|u(x)-u\ast\zeta^h(x)|\leq C^{(1)}_X\,h^2,\quad C^{(1)}_X=\max_{x\in X}C(x,\delta)
$$
We get $C^{(1)}_X<+\infty$ from the smoothness assumption.\\

Next, we consider the quadrature error $Q_{h,\varepsilon,\Delta s}(x)$. By using the expression \eqref{eq_approx_u_int} and performing the change of variables $s=\frac{\Delta s}{\varepsilon}t$, one finds:
\begin{align*}
    \left( u \ast \zeta^h \right)(x) & =
    \int_{-\ell}^{+ \infty} \int_{\RR^{d-1}} u(\Phi(s,\xi)) (\det J_\Phi) (s,\xi) \zeta^h(x-\Phi(s,\xi)) d\xi ds\\
    &=\int_{-\frac{\varepsilon\ell}{\Delta s}}^{+\infty}\int_{\RR^{d-1}}f\circ g(t,\xi)d\xi dt.
\end{align*}
with 
$$
\displaystyle
g(t,\xi)=\left(\frac{\Delta s}{\varepsilon}t,\xi\right),\quad
f(s,\xi)=\frac{\Delta s}{\varepsilon}u \left(\Phi \left(s,\xi \right) \right) (\det J_\Phi) \left(s,\xi \right) \zeta^h \left(x-\Phi \left(s,\xi \right) \right).
$$
%    &= \frac{\Delta s}{\varepsilon} \int_{-\frac{\varepsilon\ell}{\Delta s}}^{+ \infty} \int_{\RR^{d-1}} u \left(\Phi \left(\frac{\Delta s}{\varepsilon} t,\xi \right) \right) (\det J_\Phi) \left(\frac{\Delta s}{\varepsilon} t,\xi \right) \zeta^h \left(x-\Phi \left(\frac{\Delta s}{\varepsilon} t,\xi \right) \right) d\xi dt.
%\end{align*}
The term $Q_{h,\varepsilon,\Delta s}(x)$ is the error caused by the approximation of this integral in time by
$$\sum\limits_{j=n_\ell}^{+ \infty} \sum\limits_{k \in \ZZ^{d-1}} \rho_k(s_j) \zeta^h(x-x_k(s_j)).$$
We apply Theorem \ref{th_quadrature_Raviard} to the function $(t,\xi) \mapsto f\circ g(t,\xi)$ 
%:= \frac{\Delta s}{\varepsilon} u \left(\Phi(\frac{\Delta s}{\varepsilon} t,\xi) \right) (\det J_\Phi) \left(\frac{\Delta s}{\varepsilon} t,\xi \right) \zeta^h \left(x-\Phi \left(\frac{\Delta s}{\varepsilon} t,\xi \right) \right),$$
with a mesh of size $\varepsilon$.
This function is $C^d(\Omega_\ell)$ (${\bf a}$ is $C^{d+1}(U)$, thus $\Phi \in C^{d+1}(\Omega_\ell)$ and $\det J_\Phi \in C^{d}(\Omega_\ell)$). As a consequence, one finds:
\begin{align*}
    |Q_{h,\varepsilon,\Delta s}(x)| \leq C(d) \varepsilon^d \|f\|_{W^{d,1}(\Omega_\ell)}.
\end{align*}
We introduce $g(t,\xi) = (\frac{\Delta s}{\varepsilon} t, \xi)$, and $K_x:= g^{-1}(\Phi^{-1}(\overline{B}_d(x,\delta)))$. 
For $\alpha \in \NN^d$ we introduce the differential 
$$D^\alpha_{t,\xi} := \left( \frac{\partial}{\partial t} \right)^{\alpha_1} \left( \frac{\partial}{\partial \xi_1} \right)^{\alpha_2} \dots \left( \frac{\partial}{\partial \xi_{d-1}} \right)^{\alpha_d}.$$
As $\text{supp} (f) \subset K_x $, and by the change of variables $s = \frac{\Delta s}{\varepsilon} t$ we get
\begin{align*}
\displaystyle
    &\|f\circ g\|_{W^{d,1}(\Omega_\ell)} = \|f\circ g \|_{W^{d,1}(K_x)}
    = \sum\limits_{p=0}^d\sum_{\substack{\hphantom{|}\alpha \in \mathbb{N}^d\\ |\alpha|=p}} \int_{K_x} \left|D^\alpha_{t,\xi} f\circ g(t,\xi) \right| d\xi dt\\
    & \; = \frac{\Delta s}{\varepsilon} \sum\limits_{p=0}^d\sum_{\substack{\hphantom{|}\alpha \in \mathbb{N}^d\\ |\alpha|=p}} \int_{\Phi^{-1}(\overline{B}_d(x,\delta))} \left( \frac{\Delta s}{\varepsilon} \right)^{\alpha_1} \left| D_{s,\xi}^\alpha \left(u(\Phi(s,\xi)) (\det J_\Phi)(s,\xi) \zeta^h(x-\Phi(s,\xi)) \right) \right| \frac{\varepsilon}{\Delta s} d\xi ds\\
    & \; \leq \| \det J_\Phi\|_{W^{d,\infty} (\Phi^{-1}(\overline{B}_d(x,\delta)))} \sum\limits_{p=0}^d \sum_{\substack{\hphantom{|}\alpha \in \mathbb{N}^d\\ |\alpha|=p}} \int_{\Phi^{-1}(\overline{B}_d(x,\delta))} \left( \frac{\Delta s}{\varepsilon} \right)^{\alpha_1} \left| D^\alpha_{t,\xi} \left(u(\Phi(s,\xi)) \zeta^h(x-\Phi(s,\xi)) \right) \right| d\xi ds.
\end{align*}
Then, by performing the change of variable associated to $\Phi^{-1}$, one obtains:
\begin{align}
    &\mathcal{X} := \sum\limits_{p=0}^d \sum_{\substack{\hphantom{|}\alpha \in \mathbb{N}^d\\ |\alpha|=p}} \int_{\Phi^{-1}(\overline{B}_d(x,\delta))} \left( \frac{\Delta s}{\varepsilon} \right)^{\alpha_1} \left| D^\alpha_{t,\xi} \left(u(\Phi(s,\xi)) \zeta^h(x-\Phi(s,\xi)) \right) \right| d\xi ds \label{eq_proof_th_X}\\
    & \leq \sum\limits_{p=0}^d \! \sum_{\substack{\hphantom{|}\alpha \in \mathbb{N}^d\\ |\alpha|=p}} \hspace{-1mm} \left( \frac{\Delta s}{\varepsilon} \right)^{\! \! \alpha_1} \hspace{-2mm} \sum_{\substack{\beta \in \mathbb{N}^d;\\ |\beta| \leq |\alpha|}} \!
    \int_{\overline{B}_d(x,\delta)} \!
    \left| D^\beta_{y} \left(u(y) \zeta^h(x-y) \right) \right|
    P_{\alpha \beta} \! \left( |D \Phi|, \dots, |D^{|\alpha|} \Phi| \right) \! \! \left(\Phi^{-1}(y) \right) \left| \det J_\phi^{-1}(y) \right| dy \nonumber
\end{align}
where $P_{\alpha \beta}$ are polynomials with positive coefficients, and $D^k\Phi(y)$ is a vector containing all the partial derivatives $D^\alpha \Phi(y)$ with $|\alpha|=k$.
As $\Delta s=\mathcal{O}_{\varepsilon\to 0}(\varepsilon)$, there exists $A>0$ such that $\Delta s\leq A\varepsilon$ for $\varepsilon$ sufficiently small. Then, by rearranging the terms, we get:
\begin{align*}
    \mathcal{X} & \leq 
    \|\det J_\Phi^{-1}\|_{L^{\infty}(\overline{B}_d(x,\delta))} \sum\limits_{p=0}^d
    \sum_{\substack{\beta \in \mathbb{N}^d\\ |\beta| = p}}
    \int_{\overline{B}_d(x,\delta)} \left| D^\beta_{y} \left(u(y) \zeta^h(x-y) \right) \right|\\
    & \qquad \qquad \times \hspace{-2mm} \sum_{\substack{\hphantom{|}\alpha \in \mathbb{N}^d\\ |\beta| \leq |\alpha| \leq p}}
    A^{\alpha_1} P_{\alpha \beta} \left( |D \Phi|, \dots, |D^{|\alpha|} \Phi| \right) \left(\Phi^{-1}(y) \right) dy\\
    & \leq \|\det J_\Phi^{-1}\|_{L^{\infty}(\overline{B}_d(x,\delta))} C \left( \|\Phi\|_{W^{d,\infty}(\widetilde{U}_\ell)} \right) \|u\|_{W^{d,\infty}(\overline{B}_d(x,\delta))} \|\zeta^h\|_{W^{d,1}(\overline{B}_d(x,\delta))},
\end{align*}
where $C$ is a bounded function on any compact set of $[0,+\infty)$. By definition of $\zeta^h$, for any $p \in \NN$ we have:
$$\|(\zeta^h)^{(p)} \|_{L^1(\RR^d)} = \frac{ \text{meas} (\mathbb{S}^{d-1}) \|\zeta^{(p)}\|_{L^{1}([0,+\infty), \, t^{d-1}dt)} }{ h^p } :=  \frac{C_p}{h^p}.$$
We assumed that  $h\leq \delta$ and thus we get $1/h^p \leq \delta^{d-p} / h^d$. This leads to
\begin{equation}
    |Q_{h,\varepsilon,\Delta s}(x)| \leq C_x \frac{\varepsilon^{d}}{h^{d}},
    \label{eq_bound_Dx}
\end{equation}
where $C_x$ is a constant independent of $\varepsilon, \, \Delta s, \, h$. So, we get the second error term with $C^{(2)}_X = \underset{x \in X}{\max} \; C_x$, which is finite with the smoothness assumption of the theorem~\ref{th_cv_scheme}. \\

Finally, we focus on the error terms $F_{h,\Delta s}^\rho(x)$ and $F_{h,\Delta s}^\zeta(x)$. By Lemma \ref{lemma_x_ball}, there exists $n_x < N_x \in \NN$ and $\xi_x \in \RR^d, \gamma_x > 0$ such that 
$$\overline{B}_d(x,\delta) \subset \Phi \left( [n_x\,\Delta s, N_x\,\Delta s] \times \overline{B}_{d-1}(\xi_x,\gamma_x) \right).$$
We note $t_x=n_x\Delta s$ and $T_x=N_x\Delta s$.
Using Lemma \ref{lemma_euler_cv}, we have for $k \in \ZZ^{d-1}$, $n_x \leq j \leq N_x$:
\begin{align*}
    \displaystyle |\rho_k(s_j)-\rho_k^j| \leq C_\rho(t_x,T_x) \Delta s^2 \varepsilon^{d-1}.
\end{align*}
Moreover, by Lemma \ref{lemma_bound_wk_rhok} (applied to $\tilde{\omega}_k = \Delta s \, \omega_k$ that we now denote by $\omega_k$), 
$$\displaystyle \omega_k(s_j) \geq C_0(t_x,T_x) \Delta s \, \varepsilon^{d-1}.$$
Consequently,
\begin{align}
    |F_{h,\Delta s}^\rho(x)| &= 
    \left|\sum\limits_{j=n_\ell}^{+ \infty} \sum\limits_{k \in \ZZ^{d-1}} \frac{\rho_k(s_j) - \rho_k^j }{\omega_k(s_j)} \omega_k(s_j) \zeta^h(x-x_k(s_j)) \right| \nonumber \\
    & \leq \frac{C_\rho(t_x,T_x)}{C_0(t_x,T_x)} \Delta s \sum\limits_{j=n_\ell}^{+ \infty} \sum\limits_{k \in \ZZ^{d-1}} \omega_k(s_j) \zeta^h(x-x_k(s_j)).
    \label{eq_Ds_rho1}
\end{align}
Now, we can write:
\begin{align}
    \sum\limits_{j=n_\ell}^{+ \infty}  \sum\limits_{k \in \ZZ^{d-1}} & \omega_k(s_j) \zeta^h(x-x_k(s_j)) \nonumber \\
    &= \int_{U} \zeta^h(x-y) dy + \left|\int_{U} \zeta^h(x-y) dy -  \sum\limits_{j=n_\ell}^{+ \infty} \sum\limits_{k \in \ZZ^{d-1}} \omega_k(s_j) \zeta^h(x-x_k(s_j)) \right| \nonumber \\
    & \displaystyle \leq 1 + C(d)\|\zeta\|_{W^{d,1}([0,+\infty), \, t^{d-1}dt)} \frac{\varepsilon^{d}}{h^{d}}
    \leq 1 + C(d) B^d \|\zeta\|_{W^{d,1}([0,+\infty), \, t^{d-1}dt)}.
    \label{eq_Ds_rho2}
\end{align}
The first inequality is obtained by applying the same strategy as for the estimation of $Q$ above, namely using Theorem~\ref{th_quadrature_Raviard}. The second inequality follows from the fifth condition in Theorem~\ref{th_cv_scheme}.
By combining estimates \eqref{eq_Ds_rho1} and \eqref{eq_Ds_rho2}, one obtains:
\begin{align*}
    |F_{h,\Delta s}^\rho(x)| \leq C_x \Delta s \leq C_x \delta \frac{\Delta s}{h},
\end{align*}
for some constant $C_x$ independent of $\Delta s,\varepsilon$ and $h$. The second inequality is obtained by using $h \leq \delta $. Next, we define $C^{(3,1)}_X = \underset{x \in X}{\max} \, \delta C_x$ which is finite thanks to the expressions~\eqref{eq_bound_wk_proof} and~\eqref{eq_lemma_bound_rhok_cst} of $C_0(t_x,T_x)$ and $C_\rho(t_x,T_x)$. We deduce that 
\begin{align}
    \underset{x \in X}{\sup} \,  |F_{h,\Delta s}^\rho(x)| \leq C^{(3,1)}_X \frac{\Delta s}{h}.
    \label{eq_proof_th_bound_3.1}
\end{align}\\

Next, we compute the estimate of $F_{h,\Delta s}^\zeta(x)$. For all $n_x \leq j \leq N_x$, using Lemma \ref{lemma_euler_cv}, we have:
$$|x_k(s_j)-x_k^j| \leq C_\Phi(t_x,T_x)\Delta s .$$
Then, one finds:
\begin{align}
    \left| \zeta^h(x-x_k(s_j)) - \zeta^h \left(x-x_k^j \right) \right| &\leq
    \frac{|x_k(s_j)-x_k^j|}{h^{d+1}} \underset{y \in [0,1] }{\sup} | \zeta'(y)| \nonumber\\
    &\leq C_\Phi(t_x,T_x)\|\zeta'\|_{L^{\infty}([0,1])} \frac{\Delta s}{h^{d+1}}
    := C_{\zeta}(x)\frac{\Delta s}{h^{d+1}}.
    \label{eq_proof_th_cst_zeta}
\end{align}
By using lemma~\ref{lemma_euler_cv} to bound $\rho_k^j$ and that $\zeta^h(x-\cdot)$ is compactly supported in $\overline{B}_d(x,h)$, one finds:
\begin{align}
    \left| F_{h,\Delta s}^\zeta(x) \right| 
    &= \left|\sum_{(k,j) \in E_h(x)} \rho_k^j \left( \zeta^h(x-x_k(s_j)) - \zeta^h \left(x-x_k^j \right) \right) \right|
    \leq C_\zeta(x) C_\rho^0(t_x,T_x) \frac{\Delta s^2}{h^{d+1}} \varepsilon^{d-1} |E_h(x)|,
    \label{eq_proof_th_F}
\end{align}
where 
%$E_h(x)$ is the set of pairs $(k,j)$ such that the particles $x_k(s_j)$ or the points $x_k^j$ belong to $\overline{B}_d(x,h)$:
\begin{align*}
    E_h(x) = \left\{(k,j) \in \ZZ^{d-1} \times \{n_\ell, \dots, +\infty\} \, ; \; x_k(s_j) \text{ or } x_k^j \in \overline{B}_d(x,h) \right\},
\end{align*}
and $|E_h(x)|$ denotes the cardinal of $E_h(x)$.
In order to compute the cardinal of $E_h(x)$, first we introduce the set $E_{2h}^1(x)$, consisting of pairs $(k,j)$ such that $x_k(s_j)$ belongs to the larger set $\overline{B}_d(x,2h)$:
\begin{align*}
    E_{2h}^1(x) = \left\{(k,j) \in \ZZ^{d-1} \times \{n_\ell, \dots, +\infty\} \, ; \; x_k(s_j) \in \overline{B}_d(x,2h) \right\}.
\end{align*}
We show that 
\begin{align*}
    \Phi^{-1}(\overline{B}_d(x,2h)) \subset 
    \overline{B}_d \left(\Phi^{-1}(x), \, 2h \|(\nabla \Phi)^{-1}\|_{L^\infty(\widetilde{U}_\ell)} \right).
\end{align*}
Let us note $(\sigma,\xi) := \Phi^{-1}(x)$, and let $(\sigma',\xi') \in \Phi^{-1}(\overline{B}_d(x,2h))$. Then, $\nabla(\Phi^{-1})$ being bounded, one finds:
\begin{align*}
    \left| (\sigma',\xi') - (\sigma,\xi) \right| &= \left| \Phi^{-1}(\Phi(\sigma',\xi')) - \Phi^{-1}(x) \right|\\
    &\leq \left| \Phi(\sigma',\xi') - x \right| \|(\nabla \Phi)^{-1}\|_{L^\infty(\widetilde{U}_\ell)}
    \leq 2h \|(\nabla \Phi)^{-1}\|_{L^\infty(\widetilde{U}_\ell)},
\end{align*}
since $\Phi(\sigma',\xi') \in \overline{B}_d(x,2h)$. Now, we define by $F_{2h}(x)\supset E_{2h}^1(x)$ the set:
$$
\displaystyle
    F_{2h}(x) = \left\{(k,j) \in \ZZ^{d-1} \times \{-n_\ell, \dots, +\infty\} \, ; \; (s_j,\xi_k) \in \overline{B}_d \left((\sigma,\xi), \, 2h \|(\nabla \Phi)^{-1}\|_{L^\infty(\widetilde{U}_\ell)} \right) \right\},
$$
with $\xi_k$ such that $\Phi(s_j,\xi_k) = x_k(s_j)$. We have:
\begin{align*}
    F_{2h}(x) \subset G_{2h}(x) :=
    & \left[ \left\lfloor \frac{\sigma - 2h \|(\nabla \Phi)^{-1}\|_{L^\infty(\widetilde{U}_\ell)}}{\Delta s} \right\rfloor, \left\lceil \frac{\sigma + 2h \|(\nabla \Phi)^{-1}\|_{L^\infty(\widetilde{U}_\ell)}}{\Delta s} \right\rceil \right]\\
    &\times
    \prod_{1 \leq i \leq d-1} \left[ \left\lfloor \frac{\xi_i - 2h \|(\nabla \Phi)^{-1}\|_{L^\infty(\widetilde{U}_\ell)}}{\varepsilon} \right\rfloor, \left\lceil \frac{\xi_i + 2h \|(\nabla \Phi)^{-1}\|_{L^\infty(\widetilde{U}_\ell)}}{\varepsilon} \right\rceil \right],
\end{align*}
%where $\left\lfloor . \right\rfloor$ if the floor function and $\left\lceil . \right\rceil$ is the ceiling function, i.e. $n = \left\lceil t \right\rceil \Leftrightarrow n \in \ZZ$ and $n < t \leq n+1$.

\begin{align*}
    |G_{2h}(x)| &\leq \left(\frac{4h \|(\nabla \Phi)^{-1}\|_{L^\infty(\widetilde{U}_\ell)}}{\varepsilon} +2 \right)^{d-1} \left(\frac{4h \|(\nabla \Phi)^{-1}\|_{L^\infty(\widetilde{U}_\ell)}}{\Delta s} +2 \right)\\
    &\leq \left( 4 \|(\nabla \Phi)^{-1}\|_{L^\infty(\widetilde{U}_\ell)}+ 2B \max(A,1) \right)^d \frac{h^d}{\Delta s \varepsilon^{d-1}} = C(x) \frac{h^d}{\Delta s \varepsilon^{d-1}}.
\end{align*}
As a consequence:
\begin{align}
    |E_{2h}^1(x)| \leq |F_{2h}(x)| \leq |G_{2h}(x)| \leq C(x) \frac{h^d}{\Delta s \varepsilon^{d-1}}.
    \label{eq_bound_E_2h_1}
\end{align}
Let $k \in \ZZ^{d-1}$, $n_x \leq j \leq N_x$, one has:
\begin{align*}
    |x_k^j - x| \geq |x_k(s_j) - x| - |x_k^j - x_k(s_j)|
    \geq |x_k(s_j) - x| - C_\Phi(t_x,T_x) \Delta s.
\end{align*}
Now, we suppose that $C_\Phi(t_x,T_x) \Delta s \leq h$. Then, if $x_k(s_j) \notin \overline{B}_d(x,2h)$, we have $x_k^j \notin \overline{B}_d(x,h)$. Indeed:
\begin{align*}
    |x_k^j - x| \geq 2h - C_x(t_x,T_x)\Delta s  \geq h.
\end{align*}
As a result, $E_h(x) \subset E_{2h}^1(x)$. Thus, we apply equation ~\eqref{eq_bound_E_2h_1} and deduce that 
$$\displaystyle |E_h(x)| \leq C(x)\frac{h^d}{\Delta s \varepsilon^{d-1}}.$$
We define $C^{(0)}_X = \underset{x \in X}{\max} \, C_\Phi(t_x,T_x)$ and $C^{(3,2)}_X = \underset{x \in X}{\max} \, C(d) C_\zeta(x) C_\rho^0(t_x,T_x)$. These quantities are finite by using estimates   \eqref{eq_lemma_bound_xk_cst},~\eqref{eq_proof_th_cst_zeta},~\eqref{eq_lemma_bound_rhokj_cst}. Then, if $C^{(0)}_X \Delta s \leq h$ and by using \eqref{eq_proof_th_F}, one obtains
\begin{align}
    \underset{x \in X}{\sup} \, |F_{h,\Delta s}^\zeta(x)| \leq C_X^{(3,2)} \frac{\Delta s}{h}.
    \label{eq_proof_th_bound_3.2}
\end{align}\\

By combining~\eqref{eq_proof_th_bound_3.1} and~\eqref{eq_proof_th_bound_3.2}, and setting $C_X^{(3)} = C^{(3,1)}_X + C^{(3,2)}_X$, we obtain
\begin{align*}
    \underset{x \in X}{\sup} \, |F_{h,\Delta s}^\rho(x)| + \underset{x \in X}{\sup} \, |F_{h,\Delta s}^\zeta(x)| \leq C_X^{(3)}\frac{\Delta s}{h},
\end{align*}
which leads to the last error term in Theorem~\ref{th_cv_scheme}. This completes the proof of the theorem.

\end{proof}

\section{Numerical tests} \label{sec_test_num}

In this section we present various numerical simulations to test our scheme. Equation \eqref{eq_system} is solved on a bounded domain of $\RR^2_+$, $\Omega := [0,L_{x}] \times [0,L_{y}]$.
We first provide some details on the implementation of the scheme: choice of the kernel function $\eta^h$, boundary conditions and computational costs. Then we present numerical tests on the convergence of the scheme in the linear case. Finally, we use our scheme to carry out numerical simulation of a non linear landscape evolution model.

\subsection{Practical implementation of the scheme}

We provide some details on the implementation of our numerical scheme in two dimensions.

\paragraph{The kernel function $\zeta^h$} We choose the cubic spline kernel, commonly used in the SPH method, as in \cite{monaghan1992smoothed}. For $x \in \RR^2$, $h >0$, this function is given by:
\begin{equation}
    \zeta^h(x) = \frac{10}{7 \pi h^2}
    \left\{ \begin{array}{ll}
        \displaystyle 1 - \frac{3}{2} q^2 + \frac{3}{4} q^3 & \text{if $0 \leq q \leq 1$}, \\
        \displaystyle \frac{1}{4} \left( 2-q^3 \right) & \text{if $1 \leq q \leq 2$}, \\
        \displaystyle 0 & \text{otherwise},
    \end{array} \right.
\end{equation}
where $q = |x|/(4h)$.
This kernel function has compact support, is positive, and belongs to $C^2(\RR^2)$. 

\paragraph{Boundary conditions} In order to simplify the discussion on the treatment of boundary conditions, we assume that the solution $u$ of \eqref{eq_system} is periodic in the transverse direction, that is 
$$ \forall (x,y) \in \Omega, \, \; u(x,y + L_y) = u(x,y).$$
Therefore, $\Omega$ has two boundaries:
$\displaystyle
    \Gamma_1:= \{0\} \times [0,L_y]$ and $\Gamma_2 := \{L_x\} \times [0,L_y].
$
The equation \eqref{eq_system} is a first-order partial differential equation: we only need to fix one boundary condition. To initialize the scheme, the particles $(\xi_k)_{k \in \{1,\dots,n \} }$ are located at the non-characteristic boundary $\Gamma_1$. We first compute the particle positions $x_k^j$ for $-1 \geq j \geq m_k^0$ with the system \eqref{eq_syst_part_discret}, where:
\begin{align*}
    m_k^0 := \max \left\{j \leq -1 \, ; \;  \left(x_k^j \right)_1 < -h \right\} +1,
\end{align*}
that is, the position of particles outside $\Omega$ that are located at a distance smaller than $h$ to $\Gamma_1$. 
%Consequently, the domain $[-h,0[ \times [0,L_y]$ contains particles. 
Then, we compute the position of the particles inside the domain $\Omega$. These are the particles $x_k^j$ such that $0 \leq j \leq m_k^1$ with
\begin{align*}
    m_k^1 := \min \left\{j \geq 1 \, ; \;  \left(x_k^j \right)_1 > L_x \right\} -1 .
\end{align*}
Finally, we compute the position of the particles 
$x_k^j$ such that $m_k^1 \leq j \leq m_k^2$ in the domain $]L_x,L_x+h] \times [0,L_y]$, located to the right of the right boundary $\Gamma_2$ with $m_k^2$ defined by:
\begin{align*}
    m_k^2 := \min \left\{j \geq 1 \, ; \;  \left(x_k^j \right)_1 > L_x+h \right\} -1.
\end{align*}
We refer to Fig~\ref{fig_illustr_scheme} for an illustration of the particles positions.

\paragraph{Computation of the approximate solution}

Formula \eqref{eq_approx_u_h_epsilon_ds} and system~\eqref{eq_syst_part_discret} are used to  compute the approximation $\widetilde{u}_h^{\varepsilon,\Delta s}$ of $u$ in the domain $\Omega$, and in its extension $[-h,0[ \times [0,L_y]$ and $]L_x,L_x+h] \times [0,L_y]$. We denote by $\tilde \Omega := [-h,L_x+h] \times [0,L_y]$. We compute the approximate solution at points $x$ on a grid fine enough to provide a good insight into the variations of the approximate solution $\tilde u^{\varepsilon,\Delta s}_h$. One has to compute the weights $\omega_k^j$ and $\rho_k^j$. For $1 \leq k \leq n$, $\omega_k^j$ verifies \eqref{eq_discret_w}. The function $\dive {\bf a}$ is not analytically known, so it needs to be approximated numerically. Consequently, we create a rectangular mesh of $\tilde \Omega$, and compute the derivatives of $\bf a$ by a finite difference scheme. Then, we use an linear interpolation to determine the value of $\dive{\bf a}$ at the particle positions. The computation of $\rho_k^j$ follows easily from \eqref{eq_discret_rho}.

\begin{figure}[ht]
    \center
     \begin{tikzpicture}[scale = 1]
     \node (0,0) {\includegraphics[width=0.7\textwidth]{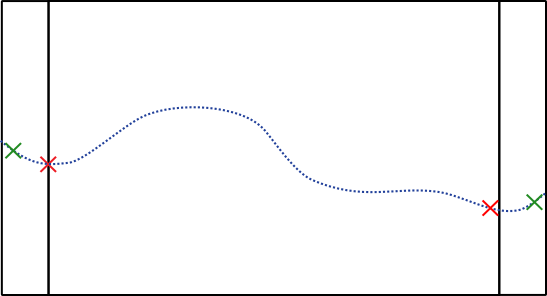}};
     \draw[red] (-4,-0.5) node {$x_k(0)$};
     \draw[my_green] (-5.3,0.5) node {$x_k^{m_k^0}$};
     \draw[red] (3.95,-1.5) node {$x_k^{m_k^1}$};
     \draw[my_green] (5.35,-0.7) node {$x_k^{m_k^2}$};
     
     \draw[black] (-4.88,3.5) node {$L_y$};
     \draw[black] (-4.88,-3.5) node {$0$};
     \draw[black] (-5.88,-3.5) node {$-h$};
     \draw[black] (4.88,-3.5) node {$L_x$};
     \draw[black] (5.95,-3.5) node {$L_x+h$};
     \end{tikzpicture}
     
     \caption{Illustration of particle positions in the scheme. Particles are generated along the time-discretization of the blue dotted curve (a characteristic curve). The red points are the extremal particles inside the domain, and the green one are the extremal particles outside it.}
     \label{fig_illustr_scheme}
\end{figure}

%For $1 \leq k \leq n$, $\omega_k^j$ verifies \eqref{eq_discret_w}. The function $\dive {\bf a}$ is not analytically known, so it needs to be approximated numerically. Consequently, we create a rectangular mesh of $\tilde \Omega$, and compute the derivatives of $\bf a$ by a finite difference scheme. The computation of $\rho_k^j$ follows easily from \eqref{eq_discret_rho}.

\paragraph{Reduction of the computational cost}

For any $x \in \Omega$, the approximate solution $\tilde u^{\varepsilon,\Delta s}_h$ is given by:
\begin{align}
    \tilde u^{\varepsilon,\Delta s}_h(x) = \sum\limits_{k = 1}^n \sum\limits_{j = m_k^0}^{m_k^2} \rho_k^j \zeta^h \left(x-x_k^j \right).
    \label{eq_numTest_u}
\end{align}
In this sum, only a few terms are not zero, as the support of $\zeta^h(x-\cdot)$ is restricted to $\overline{B}_d(x,h)$. Therefore, computing
%the basic technique consisting of the computation of 
every quantity $\zeta^h (x-x_k^j)$ for all values of $k$ and $j$ is unnecessary, and can be improved. 
For that purpose, we used the cell linked-lists method~\cite{dominguez2011neighbour}. This method consists of two steps:
\begin{enumerate}
    \item First, a mesh grid with a spatial step $h$ is created in the domain, together with an array representing this grid. Each particle is positioned in its corresponding cell inside this mesh grid. 
    \item If $x \in \Omega$, the particles that interact with this point (i.e., those corresponding to a non-zero term in the sum \eqref{eq_numTest_u}) are located in the cell containing $x$ and its eight neighbours. Therefore, the sum is computed only on the particles that belongs to these nine cells, using the previous array.
\end{enumerate}

\begin{figure}[ht]
    \center
    \includegraphics[width=0.55\textwidth]{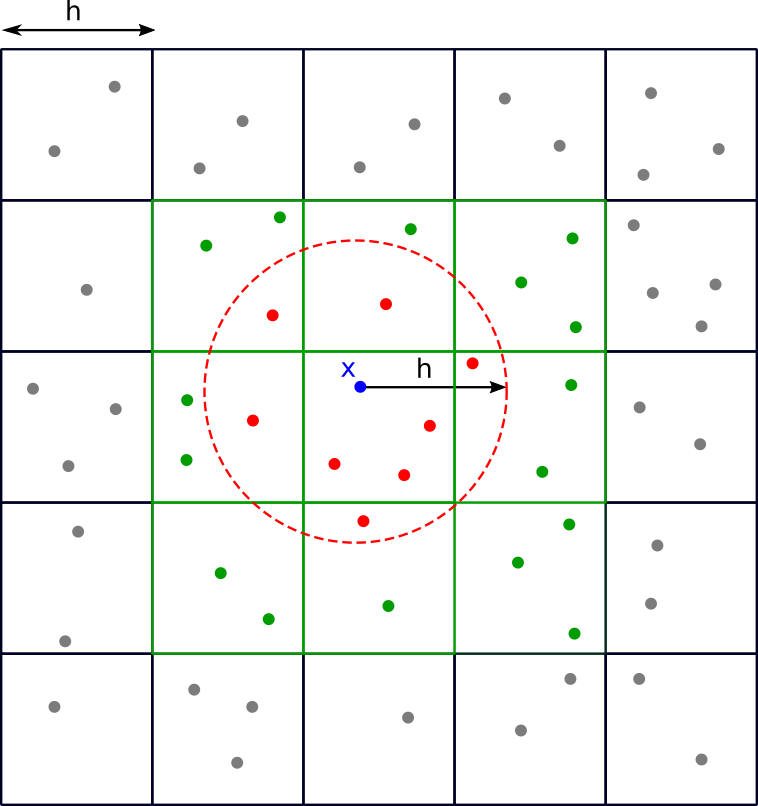}
    \caption{Grid of the linked-list method, of size $h$. The particles in the nine green neighboring cells of the blue point $x$ are in color. The red particles are located in the support of $y \mapsto \zeta^h(x-y)$ (represented by the red circle), the green particles are inside the green cells but outside this support.}
    \label{fig_linked_list}
\end{figure}

\noindent The array of particle positions in the grid needs to be computed only once. Figure~\ref{fig_linked_list} represents the grid and the particles  in the domain. The red particles interact with the point $x$, the green domain is composed of the nine neighboring cells. Green particles add unnecessary computations, but are not too many compared to the total number of particles in the domain.  A direct implementation of \eqref{eq_numTest_u} would require $N$ operations, where $N$ is the number of particles on the domain. Using the cell linked-list method, the computation of the sum requires approximately $9Nh^2\ll N$ operations (equals to the number of particles in the nine neighboring cells to the point $x$). The approximate solution $\widetilde{u}_h^{\varepsilon,\Delta}$ is evaluated on a grid with $N_x=L_x/\Delta x$ and $N_y=L_y=L_y/\Delta y$ in the streamwise and cross-stream directions. The total computational cost is then approximately $9Nh^2N_xN_y$ with the cell linked list.

%This method requires to compute the position of each particle in the grid first, which takes $N$ operations. Consequently, if we need to compute $\tilde u^{\varepsilon,\Delta s}_h$ at more than one point, the cell linked-list method is faster. 

%\paragraph{Computation of the approximate solution}

%When each particle position $x_k^j$ and associate quantity $\rho_k^j$ are known, we compute the approximate solution at points $x$ on a grid, using Eq~\eqref{eq_numTest_u}. This grid has to be fine enough to provide a good insight into the variations of the approximate solution $\tilde u^{\varepsilon,\Delta s}_h$.

\subsection{Numerical study of convergence}

In this section we present numerical experiments using the particle scheme in the domain $\Omega$ and study the convergence. With a view to landscape evolution models simulation, we consider the following linear system:
\begin{equation}
    %\forall (x,y) \in \tilde \Omega, \quad 
    \left\{ \begin{array}{l}
        \displaystyle \dive \left( {\bf a} u \right)(x,y) = r(x,y),\\
        u(0,y) = u_0(y),
    \end{array} \forall (x,y) \in \tilde \Omega\right.
    \label{eq_syst_linear_h}
\end{equation}
We define ${\bf a} := ({\bf a}_x,{\bf a}_y)$ with
\begin{equation}
    \left\{ \begin{array}{l}
        {\bf a}_x(x,y) = \tan \theta - \dx z(x,y), \\
        {\bf a}_y(x,y) = -\dy z(x,y).
    \end{array} \right.
    \label{eq_a_linear_inside}
\end{equation}
For $(x,y) \in \tilde \Omega \backslash \Omega$, the vector field ${\bf a}$ is given by:
\begin{equation}
    \left\{ \begin{array}{l}
        {\bf a}_x(x,y) = \tan \theta, \\
        {\bf a}_y(x,y) = 0.
    \end{array} \right.
    \label{eq_a_linear_outside}
\end{equation}
The function $(x,y)\mapsto \tan \theta (L_x-x) + z(x,y)$ will represent the bottom surface height in the landscape evolution model. When $z=0$, this surface is a tilted plane, $\theta$ being the angle formed by this plane with the horizontal plane. This system describes the stationary distribution of fluid height $u$ on a given topography when a constant fluid flow is injected at the boundary $x=0$.

\begin{table}[ht]
\center 
\begin{tabular}{| l| l | l |}
    \hline
    Length of the domain & $L_x$ & $1$\\
    Width of the domain & $L_y$ & $0.25$\\
    Plane inclination & $\theta$ & $39^\circ$\\
    Water height at the boundary & $u_0(y)$ & $10^{-3}$\\
    Source term & $r(x,y)$ & Variable\\
    Surface perturbation & $z(x,y)$ & Variable\\
    \hline  
\end{tabular}
\caption{Parameters of numerical simulations}
\label{tab_param}
\end{table}

In what follows, we solve system \eqref{eq_syst_linear_h} with our Particle method
for various values of the parameters. The modeling parameters are given in Table~\ref{tab_param}, taken or estimated from an experiment~\cite{guerin_streamwise_2020} with a block of plaster set on a small tilted plane and eroded by a layer of water with a constant height. The numerical parameters are given in Table~\ref{tab_param_num}. We consider that equation~\eqref{eq_syst_linear_h} is written in a non-dimensional form, so that these parameters have no physical units.

\begin{table}[ht]
\center 
\begin{tabular}{| l| l | l |}
     \hline
     Diameter of the kernel function & $h$ & Variable\\
     Distance between particles at the inbound boundary & $\varepsilon$ & Variable\\
     Discretization step for variable $s$ & $\Delta s$ & Variable\\
     Weight of particles at the boundary & $w_0$ & $\tan \theta \, \Delta s \, \varepsilon$\\
     \hline  
\end{tabular}
\caption{Parameters of the scheme.}
\label{tab_param_num}
\end{table}
 
 \noindent In order to test the convergence of the scheme, three numerical parameters are tuned: $h$, $\varepsilon$ and $\Delta s$. 
 %The other numerical parameters are determined from these values. 
 The weights $w_0$ at $x=0$
 %, denoted by $w_0$ 
 are the product of the distance between two particles, $\varepsilon$, and the distance traveled by a particle during one time step: ${\bf a}(0,y) \, \Delta s = \tan \theta \, \Delta s$. This gives $w_0 := \tan \theta \, \Delta s \, \varepsilon$. \\
 
\noindent We consider the following values for $h$, $\varepsilon$ and $\Delta s$:
\begin{align*}
    h_{char} = \frac{1}{40}, \quad \varepsilon_{char} =\frac {h_{char}}{10}, \quad \Delta s_{char} = \frac{1}{200}.
\end{align*}
The values of $h$, $\varepsilon$ and $\Delta s$ can vary in the following range:
\begin{align*}
    & h_{min} := 2^{-4} h_{char} \leq h \leq h_{max} := 2^{4} h_{char}, \\
    & \varepsilon_{min} :=2^{-2} \varepsilon_{char} \leq \varepsilon \leq \varepsilon_{max} := 2^3 \varepsilon_{char},\\
    & \Delta s_{min} := 2^{-5} \Delta s_{char} \leq \Delta s \leq \Delta s_{max} := 2^5 \Delta s_{char}.
\end{align*}
These bounds are chosen to ensure that the support of kernel is smaller than the domain, and that there is an average of more than one particle in the kernel function support. We present some error curves, depending on parameters. The error is the $\ell^\infty$ relative error, defined by:
\begin{align}
    \displaystyle \text{Error } = 
    \frac{ \underset{1 \leq i \leq N_x, 1 \leq j \leq N_y}{\max} \left( \tilde u^{\varepsilon,\Delta s}_h(i \,dx, j\, dy) - u(i \,dx, j\, dy) \right)}{ \underset{1 \leq i \leq N_x, 1 \leq j \leq N_y}{\max}u(i \,dx, j\, dy) },
    \label{eq_l2_relative_error}
\end{align}
where $dx$, $dy >0$ are given by $dx \, N_x = L_x$ and $dy \, N_y = L_y$.

\subsubsection{Numerical convergence of the Particle method}
 
\paragraph{Test on a flat surface.}

First we study the convergence of the scheme in the simplest framework $z=0$. The upstream water height is constant, of value one and the right hand side $r$ is set to $0$. Thus the exact solution is a constant water height, equal to one.
The numerical results performed with $h=h_{char}$, $\varepsilon=\varepsilon_{char}$ and $\Delta s =\Delta s_{char}$ produce a relative error with the exact solution of about $0.14\%$. 
As $u$ and ${\bf a}$ are constants, there is neither convolution error nor error in the computation of the $\omega_k^j$ and $\rho_k^j$. The only error comes from the quadrature error and according to Theorem \ref{th_cv_scheme}, we have:
\begin{align}
    \text{Error } \leq \frac{C \varepsilon^2}{h^2}.
    \label{eq_error_constant_height}
\end{align}
Thus the error grows with $\varepsilon /h$. \newline
\begin{figure}[ht]
     \centering
     \begin{subfigure}[b]{0.45\textwidth}
        \includegraphics[width=\textwidth]{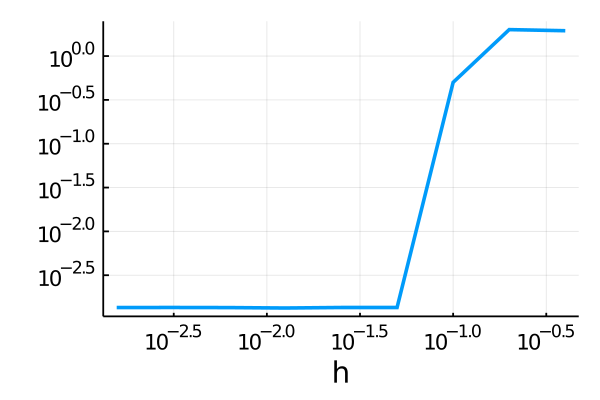}
        \caption{$\varepsilon= h/10$, $\Delta s= 4 \varepsilon$.}
        \label{fig_water_error1_1}
    \end{subfigure}
    \begin{subfigure}[b]{0.45\textwidth}
        \includegraphics[width=\textwidth]{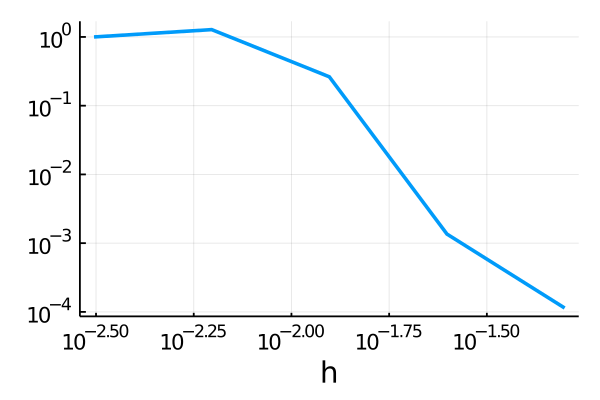}
        \caption{$\varepsilon= h_{char}/10$ and $\Delta s= 4 \varepsilon$ fixed.}
        \label{fig_water_error1_2}
    \end{subfigure}\\
    \centering
    \begin{subfigure}[b]{0.45\textwidth}
        \includegraphics[width=\textwidth]{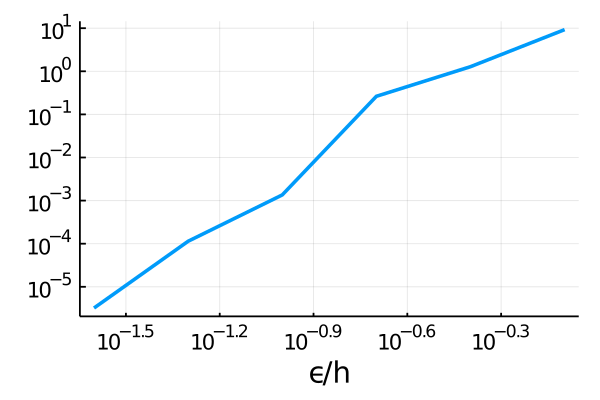}
        \caption{$h=h_{char}$ fixed and $\Delta s= 4 \varepsilon$.}
        \label{fig_water_error1_3}
    \end{subfigure}
    \begin{subfigure}[b]{0.45\textwidth}
        \includegraphics[width=\textwidth]{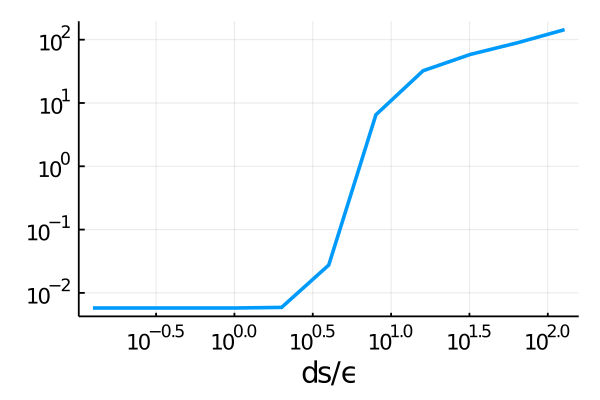}
        \caption{$h=h_{char}$ and $\varepsilon=\varepsilon_{char}$ fixed.}
        \label{fig_water_error1_4}
    \end{subfigure}
    \caption{Flat surface test case: graphs of the $\ell^\infty$ relative errors \eqref{eq_l2_relative_error} as functions of $h$, $\varepsilon$ and $\Delta s$, in log-log scale (using decimal logarithms). $z=0$, $r = 0$.}
    \label{fig_water_error1}
\end{figure}

In figure~\ref{fig_water_error1_1}, we vary $h$  between $h_{\min}$ and $h_{\max}$, and the values of $\varepsilon$ and $\Delta s$ are varied accordingly with the formula: $\varepsilon=h/10$ and $\Delta s=4\varepsilon$. Therefore, the number of particles in $\overline{B}_d(x,h)$ does not change. The computation times for the different values of $h$ remain almost constant. This is due to the linked-list method which keeps constant the number of computations when the number of particles in $\overline{B}_d(x,h)$ is constant. The only computation which takes more operations when $h$ becomes smaller is the computation of the linked-list grid. 

\smallskip

For values of $h$ greater than $1/40$ the error grow with $h$. Indeed, a more precise formula for~\eqref{eq_error_constant_height} would be
\begin{align}
    \text{Error } \leq C \varepsilon^2 \left(1 + \frac{1}{h}+  \frac{1}{h^2} \right),
    \label{eq_error_constant_height2}
\end{align}
(this can be seen by ignoring the simplification made just before~\eqref{eq_bound_Dx} in the proof of Theorem~\ref{th_cv_scheme}). The term $C \left(1+1/h \right)$ is responsible for this growth. However, this effect only appears when $h$ is large enough (see below). In practice the value of $h$ is small and this effect does not appear. Indeed, for values of $h$ smaller than $1/40$ the error is constant, of about $10^{-2.9}$, which is consistent with the error bound~\eqref{eq_error_constant_height}. %Therefore, to reduce the error, $\varepsilon$ should be taken smaller.

\smallskip
Figure~\ref{fig_water_error1_2} shows the relative error depending on $h$, when $\varepsilon$ and $\Delta s$ are fixed. In accordance with equation~\eqref{eq_error_constant_height} this error grows when $h$ becomes smaller, because the number of particles in the kernel diminishes with $h$. Figure~\ref{fig_water_error1_3} shows the error when $h=h_{char}$ and $\varepsilon$ varies with $\Delta s$. It shows that the error decreases if $\varepsilon$ and $\Delta s$ are decreased in agreement with equation~\eqref{eq_error_constant_height}. In figure~\ref{fig_water_error1_4}, $h=h_{char}$, $\varepsilon=\varepsilon_{char}$ and only $\Delta s$ varies. The error decreases with $\Delta s$, and reaches a minimum of about $10^{-5}$, when $\Delta s \leq 2 \varepsilon$. This is due to a simplification made in the proof of the theorem~\ref{th_cv_scheme} where we used the inequality $\Delta s\leq A\varepsilon$ in the proof of the quadrature error, after equation~\eqref{eq_proof_th_X}. Therefore a more precise formula for~\eqref{eq_error_constant_height} contains terms that grow with $\Delta s / \varepsilon$, which is consistant with figure~\ref{fig_water_error1_4}. This shows that the spacing between quadrature points along $s$ and along $\xi$ should be roughly equal.
%, which is intuitively clear.

\smallskip
In figure~\ref{fig_water_error1_3}, we observe a minimum relative error on the order of $10^{-5.3}$, but which could be made smaller with smaller values of $\varepsilon$. Setting $h=h_{char}$, $\varepsilon=h/40$ and $\Delta s= 2\varepsilon$, the $\ell^\infty$ relative error is $7 \times 10^{-7}$. Of course, the computation time dramatically increases with such a small value of $\varepsilon$.

\bigskip
We have also considered a non-zero constant source term $r=1/20$. The exact solution can be computed explicitly:
\begin{align*}
    \forall (x,y) \in \Omega, \quad u(x,y) = u_0 + \frac{r \, x}{\tan \theta}.
\end{align*}
We found that the $\ell^\infty$ relative error when $h=h_{char}$, $\varepsilon=\varepsilon_{char}$, and $\Delta s = \Delta s_{char}$ is $2.9 \times 10^{-3}$.

\paragraph{Test on a non-flat surface, transverse perturbation.}

We consider the system with a surface varying in the transverse direction $y$:
\begin{align*}
    \forall (x,y) \in \Omega, \quad z_1(x,y) = \frac{u_0}{2} \cos \left(\frac{2 \pi}{L_y} \left(y-\frac{L_y}{2} \right) \right).
\end{align*}

\begin{figure}[ht]
     \centering
     \begin{subfigure}[b]{0.32\textwidth}
        \includegraphics[width=\textwidth]{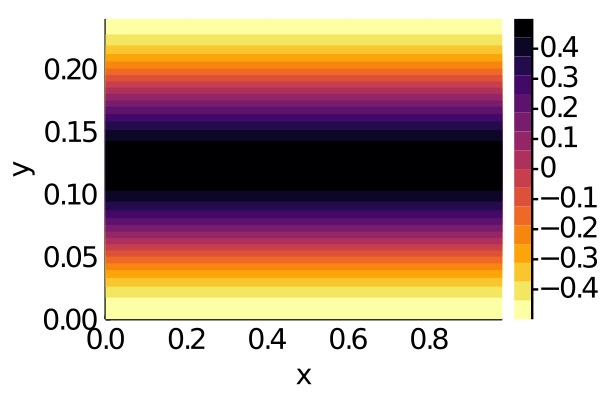}
        \caption{Surface height.}
        \label{fig_water_2_soil}
     \end{subfigure}
     \begin{subfigure}[b]{0.32\textwidth}
        \includegraphics[width=\textwidth]{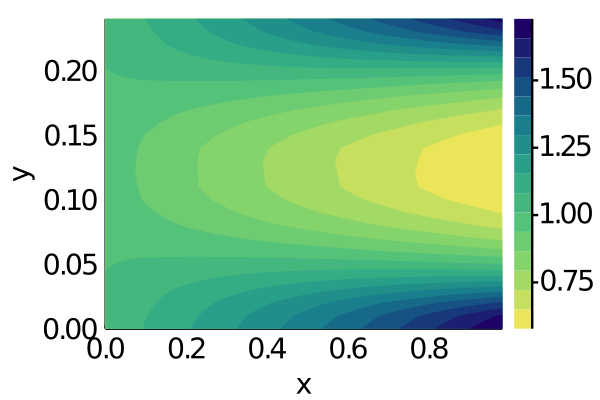}
        \caption{Water height.}
        \label{fig_water_2_water}
     \end{subfigure}
     \begin{subfigure}[b]{0.32\textwidth}
         \includegraphics[width=\textwidth]{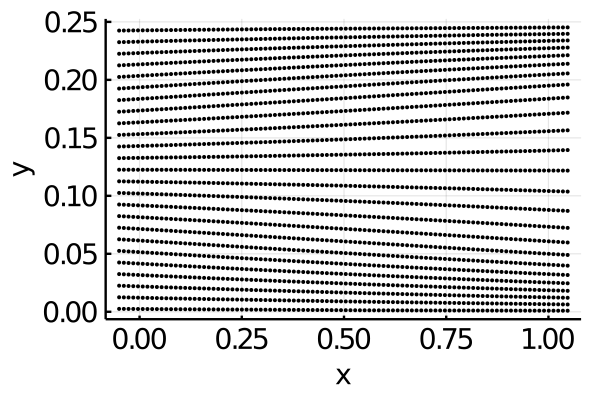}
         \caption{Particle positions.}
         \label{fig_water_2_part}
     \end{subfigure}
     \caption{Non-flat surface,  transverse perturbation: surface height~(a), water height~(b) and particle positions~(c) in the domain $\Omega$, with $z=z_1$, $r = 0$ and $h = 4 h_{char}$, $\varepsilon= \varepsilon_{char}$, $\Delta s= 2 \Delta s_{char}$.}
     \label{fig_water_2}
\end{figure}

\noindent Figure~\ref{fig_water_2_soil} shows a heat map of this function. Figure~\ref{fig_water_2_water} presents the water height computed by the particle scheme, and particle positions are showed in Figure~\ref{fig_water_2_part}. We can see that, when $x$ increases, particles tend towards the sides $y=0$ and $y=0.25$, where the surface height is smaller.

\begin{figure}[ht]
     \centering
     \begin{subfigure}[b]{0.45\textwidth}
        \includegraphics[width=\textwidth]{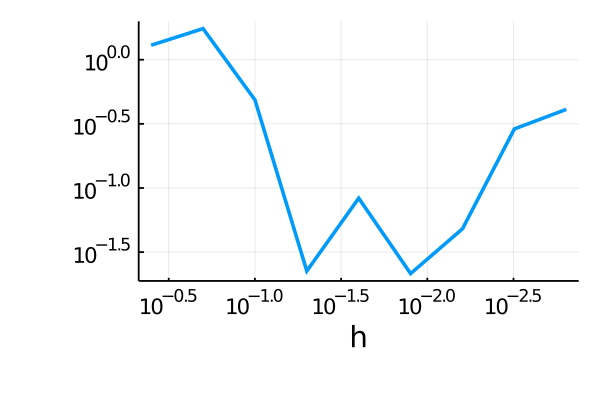}
        \caption{$\varepsilon= h/10$ and $\Delta s= 4 \varepsilon$.}
        \label{fig_water_error2_1}
    \end{subfigure}
    \begin{subfigure}[b]{0.45\textwidth}
        \includegraphics[width=\textwidth]{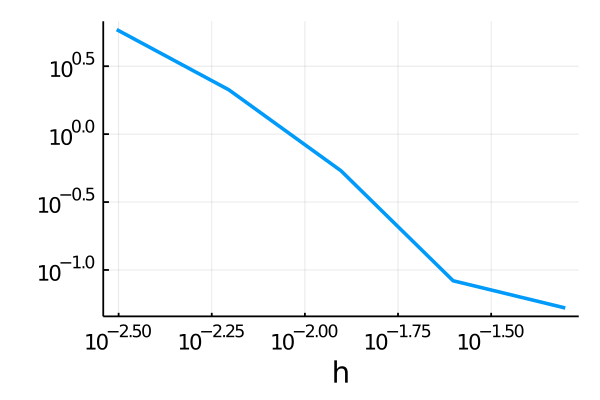}
        \caption{$\varepsilon= h_{char}/10$ and $\Delta s= 4 \varepsilon$ fixed.}
        \label{fig_water_error2_2}
    \end{subfigure}\\
    \centering
    \begin{subfigure}[b]{0.45\textwidth}
        \includegraphics[width=\textwidth]{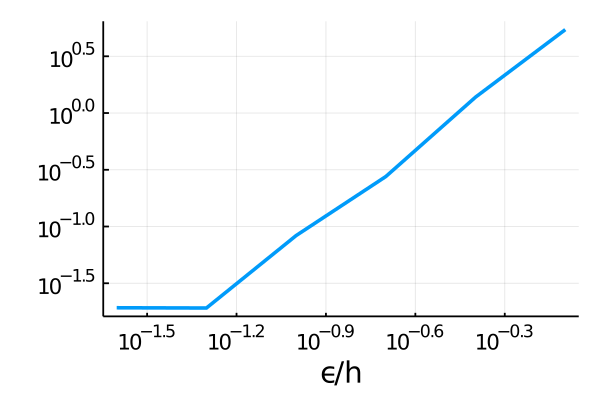}
        \caption{$h=h_{char}$ fixed and $\Delta s= 4 \varepsilon$.}
        \label{fig_water_error2_3}
    \end{subfigure}
    \begin{subfigure}[b]{0.45\textwidth}
        \includegraphics[width=\textwidth]{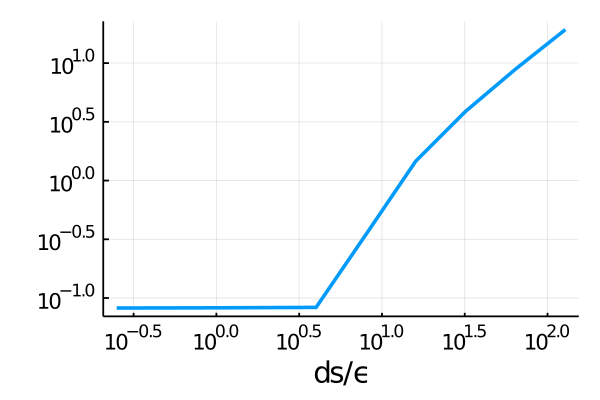}
        \caption{$h=h_{char}$ and $\varepsilon=\varepsilon_{char}$  fixed.}
        \label{fig_water_error2_4}
    \end{subfigure}
    \caption{Non-flat surface,  transverse perturbation: graphs of the $\ell^\infty$ relative errors \eqref{eq_l2_relative_error} as functions of $h$, $\varepsilon$ and $\Delta s$, in log-log scale. $z=z_1$, $r = 0$.}
    \label{fig_water_error2}
\end{figure}

\medskip
Next, we present some error curves.
The exact solution of System~\eqref{eq_syst_linear_h} cannot be computed. Therefore we compute a reference solution with a finite-volume method as in~\cite{binard2024well}, and a discretisation of $400 \times 100$ points; the error is computed using this reference solution.
% temps de calcul VF : 40 secondes. Temps de calcul méthode part pour valeurs caractéristiques : 28 secondes. Pour h=1/40, epsilon = h/40 et \Delta s = epsilon/2 le temps de calcul est de 1369 secondes
First, in Figure~\ref{fig_water_error2_1}, $h$ varies with $\varepsilon/h$ and $\Delta s/h$ is kept constant.
For $1/3 \geq h \geq 1/20$, the error decreases with $h$, due to the first term of the error bound~\eqref{eq_error_scheme}. To decrease the error, $\varepsilon$ or $\Delta s$ should be taken small compared to $h$. 
Then, Figure~\ref{fig_water_error2_2} shows the relative error depending on $h$, when $\varepsilon$ and $\Delta s$ are fixed. This error grows when $h$ becomes smaller, because of the terms in powers of $1/h$ in Equation~\eqref{eq_error_scheme}.
Figure~\ref{fig_water_error2_3} shows the error when $h=h_{ref}$ is fixed and $\varepsilon$, $\Delta s$ vary, and in Figure~\ref{fig_water_error2_4} only $\Delta s$ varies. In both Figures, the error decreases when $\varepsilon$ and $\Delta s$ diminish, because of the last two terms of Equation~\eqref{eq_error_scheme}.

\paragraph{Test on a non-flat surface, longitudinal perturbation.}

Now, we consider the system with a surface varying in $x$, in the longitudinal direction:
\begin{align}
    \forall (x,y) \in \Omega, \quad z_2(x,y) = 20 \, u_0 \cos \left(\frac{8 \pi}{L_x} x \right).
    \label{eq_function_z2}
\end{align}

\begin{figure}[ht]
     \centering
     \begin{subfigure}[b]{0.4\textwidth}
        \includegraphics[width=\textwidth]{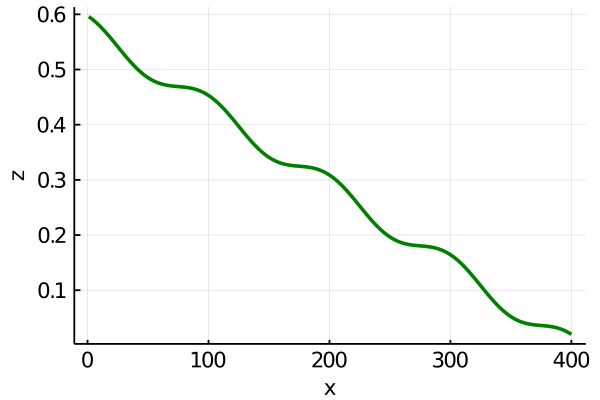}
        \caption{Surface profile as a function of $x$.}
        \label{fig_3_profile}
     \end{subfigure} 
     \begin{subfigure}[b]{0.4\textwidth}
        \includegraphics[width=\textwidth]{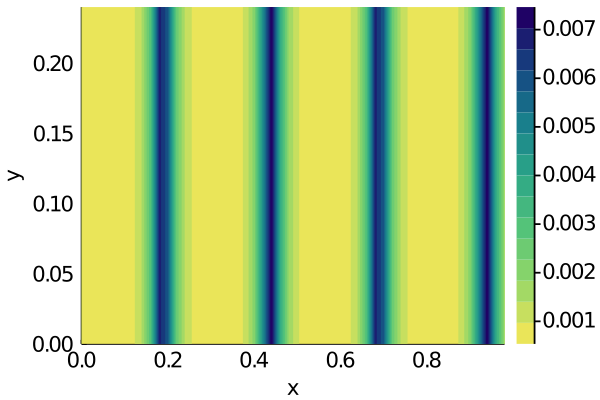}
        \caption{Water height.}
        \label{fig_3_water}
     \end{subfigure}
     \caption{Non-flat surface, longitudinal perturbation: surface profile~(a) and water height~(b). $z=z_2$, $r = 0$, $h = 4 h_{char}$, $\varepsilon= \varepsilon_{char}$, $\Delta s= 2 \Delta s_{char}$}
     \label{fig_3}
\end{figure}

\noindent The profile of this surface is showed in Figure~\ref{fig_3_profile}. We can see that the function $x \mapsto z_2(x,0)$ is strictly decreasing. Thus the vector field ${\bf a}$ does not vanish.
Figure~\ref{fig_3_water} presents the water height computed by the particle scheme.
The quantitative behavior of the error curves is similar to the previous example~\ref{fig_water_2}, but the errors are approximately fifty times bigger. It can be explained by the fact that the minimum of the function $\det J_\Phi$ is quite small, and this term appears in the error estimate \eqref{eq_error_scheme}. As a comparison, the relative error is ten times smaller if the function $z$ is divided by two compared with~\eqref{eq_function_z2}, when $h=h_{char}$, $\varepsilon= \varepsilon_{char}$, and $\Delta s= 2 \Delta s_{char}$.

% \begin{figure}[ht]
%      \centering
%      \begin{subfigure}[b]{0.49\textwidth}
%         \includegraphics[width=\textwidth]{Images_part/err_3_hnoy.png}
%         \caption{$\varepsilon= h/10$, $\Delta s= 4 \varepsilon$ depends on $h$.}
%         \label{fig_water_error3_1}
%     \end{subfigure}
%     \begin{subfigure}[b]{0.49\textwidth}
%         \includegraphics[width=\textwidth]{Images_part/err_3_hnoy_2.png}
%         \caption{$\varepsilon= h_{char}/10$, $\Delta s= 4 \varepsilon$ are fixed.}
%         \label{fig_water_error3_2}
%     \end{subfigure}\\
%     \begin{subfigure}[b]{0.49\textwidth}
%         \includegraphics[width=\textwidth]{Images_part/err_3_nb_noy_x.png}
%         \caption{Error depending on $\frac{h}{\varepsilon}$.\\ $h=h_{char}$ is fixed, $\Delta s= 4 \varepsilon$ depends on $\varepsilon$.}
%         \label{fig_water_error3_3}
%     \end{subfigure}
%     \begin{subfigure}[b]{0.49\textwidth}
%         \includegraphics[width=\textwidth]{Images_part/err_3_eps_sur_ds.png}
%         \caption{Error depending on $\frac{\Delta s}{\varepsilon}$. \\$h=h_{char}$ and $\varepsilon=\varepsilon_{char}$ are fixed.}
%         \label{fig_water_error3_4}
%     \end{subfigure}
%     \caption{Graph of the $l^2$ relative errors \eqref{eq_l2_relative_error} as functions of $h$, $\varepsilon$ and $\Delta s$, in log-log scale. $z=z_2$, $r = 0$.}
%     \label{fig_water_error3}
% \end{figure}

\subsubsection{Numerical simulations with dry areas}

We have designed a numerical scheme under the assumption that the characteristic curves cover entirely the computational domain. Using the terminology of landscape evolution models, this means that there are no ''dry areas" in the domain. Here, we discuss an extension of the numerical scheme in the presence of ``dry areas''.

\begin{definition}
    A dry area is a connected component of $\widetilde{U}_\ell \backslash \overline{\Phi(\Omega_\ell)}$.
    That is, it is a set of points that do not belong to a characteristic curve issued from the boundary $x_1=0$. We denote by $D_{\widetilde{U}_\ell}$ the union of all the dry areas of $\widetilde{U}_\ell$, that is:
    $$D_{\widetilde{U}_\ell} = \widetilde{U}_\ell \backslash \overline{\Phi(\Omega_\ell)}.$$
\end{definition}

In the case of dry areas, we may construct a solution on the whole domain if we assume that the source term $S$ of \eqref{eq_system} vanishes on $D_{\widetilde{U}_\ell}$. We then consider the solution, given by:
\begin{itemize}
    \item Equation~\eqref{eq_expr_sol_u} in $\text{Int}(\Phi(\Omega_\ell))$,
    \item $u=0$ in $D_{\widetilde{U}_\ell}$.
\end{itemize}
The solution may be discontinuous on $\partial (\Phi(\Omega_\ell)) = \partial (D_{\widetilde{U}_\ell})$ and we do not define it on this set.
See figure~\ref{fig_caract_curves_dry} for an example of such a situation: the dashed area is not reached by characteristic curves. It can be seen as a hill, whose top is the red cross. The left pointed edge of the dashed area is a mountain pass, and ${\bf a}$ vanishes at this point.

\begin{figure}[ht]
    \center
     \begin{tikzpicture}[scale = 1]
     \node (0,0) {\includegraphics[width=0.5\textwidth]{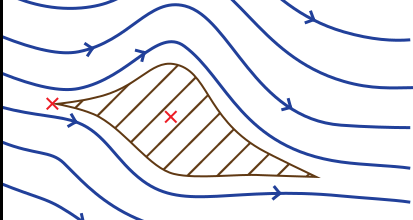}};
     %\draw[red] (0.2,0) node {$\mathbf{a(x) = 0}$};
     \draw[black] (-4,2.6) node {$\RR^{d-1}$};
     \end{tikzpicture}
     
     \caption{Characteristic curves on the domain $\RR^{d-1} \times \RR^+$. The vector field ${\bf a}$ vanishes at points indicated by red crosses.}
     \label{fig_caract_curves_dry}
\end{figure}

We have considered from a numerical point of view and chosen a surface with more variations than in previous section, see figure~\ref{fig_4_soil}. For $(x,y) \in \Omega$, let
\begin{align*}
    z_3(x,y) = 2 u_0 \cos \left(\frac{4 \pi}{L_y} \left(y-\frac{L_y}{2} \right) \right).
\end{align*}

\begin{figure}[ht]
     \centering
     \begin{subfigure}[b]{0.33\textwidth}
        \includegraphics[width=\textwidth]{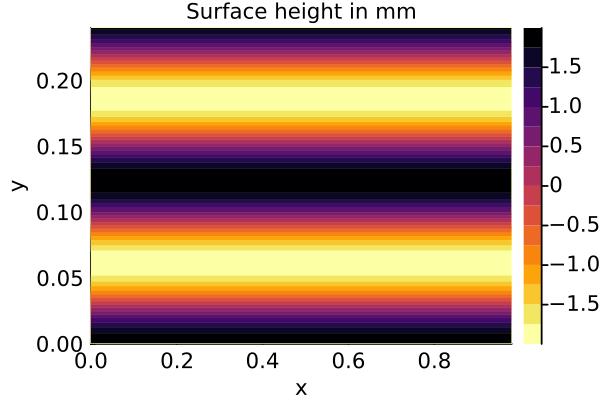}
        \caption{Surface height.}
        \label{fig_4_soil}
     \end{subfigure} \hspace{-3mm}
     \begin{subfigure}[b]{0.33\textwidth}
        \includegraphics[width=\textwidth]{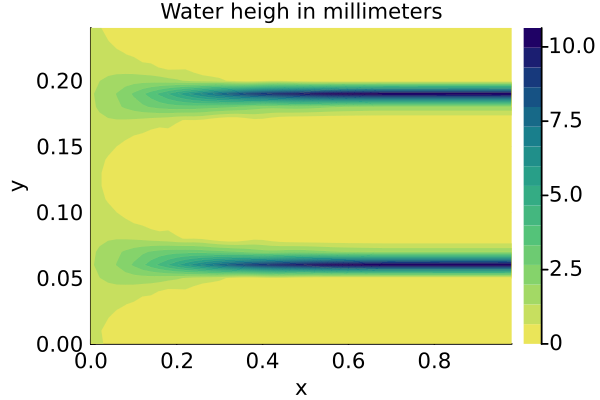}
        \caption{Water height.}
        \label{fig_4_water}
     \end{subfigure} \hspace{0mm}
     \begin{subfigure}[b]{0.33\textwidth}
        \includegraphics[width=\textwidth]{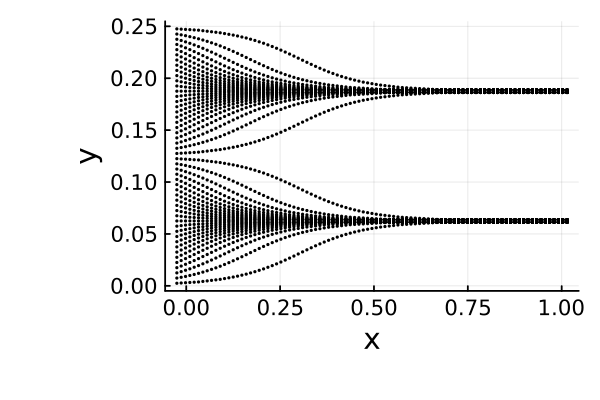}
        \caption{Particle positions.}
        \label{fig_4_part}
     \end{subfigure}
     \caption{Test case with dry areas: surface height~(a), water height~(b) and positions of the particles~(c). $z=z_3$, $r = 0$ and $h = 4 h_{char}$, $\varepsilon= \varepsilon_{char}$, $\Delta s= 2 \Delta s_{char}$.}
     \label{fig_4}
\end{figure}

\noindent The solution of system \eqref{eq_syst_linear_h} presents some dry areas, on the highest areas of the surface. Figure~\ref{fig_4_water} presents the water height computed from the particle scheme, and figure~\ref{fig_4_part} the position of particles on the domain. We observe that they regroup in the lowest parts of the bottom surface, the so-called thalweg in hydrology terms. Between these thalwegs there are no particles, which means that the water height vanishes.

\subsection{Application to a landscape evolution model}

In this section we present numerical simulations of a landscape evolution model studied in \cite{binard2024well}.
The model consists of three partial differential equations on the bottom topography $z$, the fluid height $h$ and the sediment concentration $c$. Focusing on topography erosion, the time derivatives of $h,c$ are negligible and
the system is given by: $\forall t \geq 0$, $\forall x \in \Omega \subset \RR^2$, 
\begin{subequations}
\begin{empheq}[left= \empheqlbrace]{align}
& \displaystyle
\dive(hv) = r(t,x), \vspace{2mm} \label{eq_syst_statio_h} \\
&\displaystyle
\dive (ch v) = \rho_s \, e \left( \frac{h(t,x)}{H} \right)^m \left( \frac{|v(t,x)|}{V} \right)^n - \rho_s \, s \, \frac{c(t,x)}{c_{sat}},\vspace{2mm} \label{eq_syst_statio_c} \\
&\displaystyle
\dt z = K \Delta z(t,x) - e \left( \frac{h(t,x)}{H} \right)^m \left( \frac{|v(t,x)|}{V} \right)^n + s \, \frac{c(t,x)}{c_{sat}}, \label{eq_syst_statio_z} \\
& \displaystyle
v(t,x) = V \, (\tan \theta,0) - V \, \nabla (h+z) (t,x).
\end{empheq}
\label{syst_statio_method_part}
\end{subequations}

%\noindent In this system, $h$ is the water height, $c$ the cross-sectional average of the concentration of sediments in water, and $z$ is bottom surface height. 
The function $r$  represents the water source term, $\rho_s$ is the volumetric mass density of the sediments, $c_{sat}$ is the concentration of saturation for sediments, $e$ is the erosion rate, $s$ is the sedimentation rate, and $K$ is the constant of creep. The constants $H$ and $V$ are characteristic values for water height and water velocity. 
%This model had been studied in \cite{binard2024well}.

\medskip
Equation~\eqref{eq_syst_statio_z} is discretized in time by an explicit Euler scheme. Denoting by $dt$ the time step of this scheme, we define $t_i := i \, dt$. Then, due to the expression of water velocity, both equations~\eqref{eq_syst_statio_h} and~\eqref{eq_syst_statio_c} are fully 
non linear. Therefore, these equations cannot be solved directly by the previous particle scheme, at $t \geq 0$ fixed. However, at each time step $t_i$, the quantities $z_i := z(t_i,.)$ and $v_i := v(t_i,.)$ differ only by an order $O(dt)$ term from those at the previous time step $t_{i-1}$. Consequently, we can propose a consistent in time discretization of these equations by evaluating these quantities at the previous time step in the expressions of the water velocity and of the source terms. We consider the following time discretization of~\eqref{syst_statio_method_part}. It leads to a linear system for the unknowns $h_{i+1}$, $c_{i+1}$, $z_{i+1}$: $\forall i \in \NN$, $\forall x \in \Omega \subset \RR^2$,
\begin{subequations}
\begin{empheq}[left= \empheqlbrace]{align}
& \displaystyle
\dive(h_{i+1} v_{i})(x) = r_{i+1}(x), \vspace{2mm} \label{eq_syst_statio_hn} \\
&\displaystyle
\dive (c_{i+1} h_{i+1} v_{i}) (x) = \rho_s \, e \left( \frac{h_i(x)}{H} \right)^m \left( \frac{|v_i(x)|}{V} \right)^n - \rho_s \, s \, \frac{c_{i}(x)}{c_{sat}},\vspace{2mm} \label{eq_syst_statio_cn} \\
&\displaystyle
z_{i+1}(x) = z_i(x) + dt K \Delta z_i(x) - e \, dt \left( \frac{h_i(x)}{H} \right)^m \left( \frac{|v_i(x)|}{V} \right)^n + s \, dt \, \frac{c_i(x)}{c_{sat}}, \\ \label{eq_syst_statio_zn} 
& \displaystyle
v_i(x) = V \, (\tan \theta,0) - V \, \nabla (h_i+z_i) (x), \nonumber
\end{empheq}
\label{syst_statio_method_part_discr_time}
\end{subequations}
\noindent
\hspace{-1mm}where $r_i(x) \approx r(t_i,x)$, $h_i(x) \approx h(t_i,x)$, and $c_i(x)\approx c(t_i,x)$.
The first two equations of this system, equation.~\eqref{eq_syst_statio_hn} and equation~\eqref{eq_syst_statio_cn} can be solved by the Particle method. The scheme is initialized with a constant elevation, that is $h_0(x,y)+z_0(x,y) = h_0$. Therefore, the initial water velocity is constant: $v_0(x,y) = V (\tan \theta,0) $. The initial concentration $c$ is defined to be constant: $c_0(x,y) = c_0$.

\paragraph{Choice of parameters}

\begin{table}[ht]
\center 
\begin{tabular}{| l| l | l |}
     \hline
     Length of the domain & $L_x$ & $40$ cm\\
     Width of the domain & $L_y$ & $10$ cm\\
     Characteristic water speed & $ V$ & $1$ m/s\\	
     Exponent of friction over $h$ & $m$ & $1,6$\\
     Exponent of friction over $v$ & $n$ & $3,2$\\
    Density of the sediments & $\rho_s$ & $2,17.10^6$ g/m$^3$ \\
    Concentration at saturation & $c_{sat}$ & $3,17.10^5$ g/m$^3$ \\
    Erosion speed & $e$ & $0,5$ mm/hour\\
    Sedimentation speed & $s$ & $e/2000$ \\
    Angle of the plane & $\theta$ & $39^\circ$\\
    Water height at the inbound boundary & $h_0$ & $0.5$ mm\\
    Concentration of sediments at the inbound boundary & $c_0$ & $317$ g/m$^3$\\
     \hline  
\end{tabular}
\caption{Parameters of the model.}
\label{tab_param_landscape}
\end{table}

\begin{table}[ht]
\center 
\begin{tabular}{| l| l | l |}
    \hline
    Number of points of the grid in the $x$ direction & $N_x$ & $400$\\
    Number of points of the grid in the $y$ direction & $N_y$ & $100$\\
    Time step for the equation on $z$ & $dt$ & $4.5$ s\\
    Kernel diameter & $h$ & $L_y/4$\\
    Distance between particles at the boundary & $\varepsilon$ & $h/10$\\
    Discretisation of variable $s$ & $\Delta s$ & $0.005$ s\\
    \hline  
\end{tabular}
\caption{Parameters of numerical simulations}
\label{tab_param_num_landscape}
\end{table}

The choice of parameters is the same as in \cite{binard2024well}, where numerical simulations of system \eqref{syst_statio_method_part} were carried out with a finite-volume method.
It is given in Table~\ref{tab_param_landscape}.
The choice of numerical parameters is given in Table \ref{tab_param_num_landscape}. The computation of $\nabla z$ and $\Delta z$ is performed with a finite difference discretization on a grid of size $N_x \times N_y$.

\begin{figure}[ht]
     \centering
     \begin{subfigure}[b]{0.42\textwidth}
        \includegraphics[width=\textwidth]{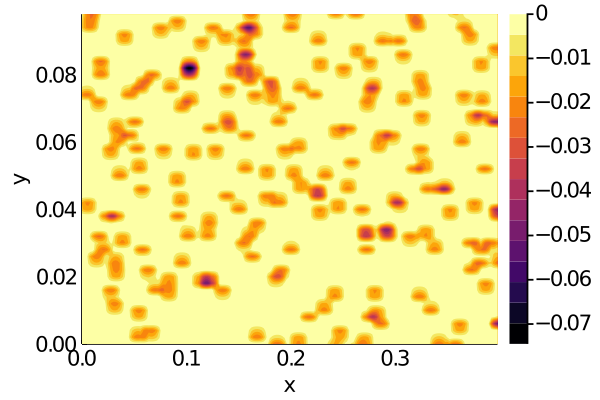}
        \caption{Initial surface $z$, in millimeters.}
        \label{fig_init_z}
    \end{subfigure} \hspace{1mm}
    \begin{subfigure}[b]{0.42\textwidth}
        \includegraphics[width=\textwidth]{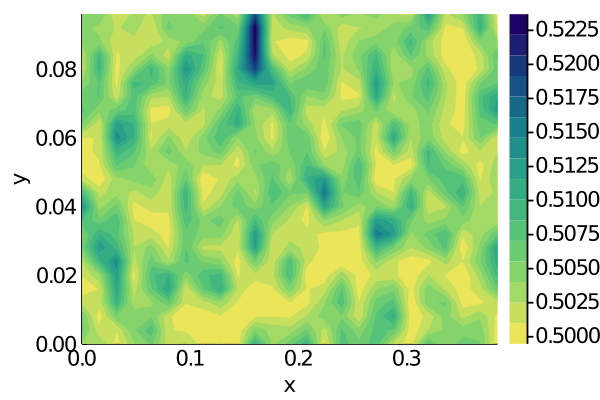}
        \caption{Initial water height, in millimeters.}
        \label{fig_init_water}
    \end{subfigure}
    \caption{Heat maps of the initial surface~(A) and initial water height~(B), in the domain $[0,L_x] \times [0,L_y]$.}
    \label{fig_sol_init_part}
\end{figure}

In the numerical simulations, the initial surface is a flat tilted plane with a small random perturbation, in the form of channels dug at random positions. This surface, seen from above is depicted in Figure~\ref{fig_sol_init_part}. The computation time is about three hours for $1000$ time steps, that is a final time of $1.25$ hours in the simulation.

\paragraph{Results of numerical simulations}

\begin{figure}[ht]
	\centering
     \begin{subfigure}[t]{0.42\textwidth}
          \centering
          \includegraphics[width=\textwidth]{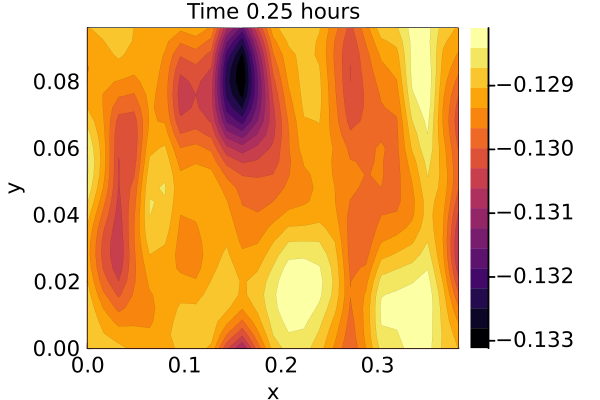}
          \caption{Surface height after $200$ time steps.}
          \label{fig_stable_soil_200}
     \end{subfigure} \hspace{0mm}
     \begin{subfigure}[t]{0.42\textwidth}
          \centering
          \includegraphics[width=\textwidth]{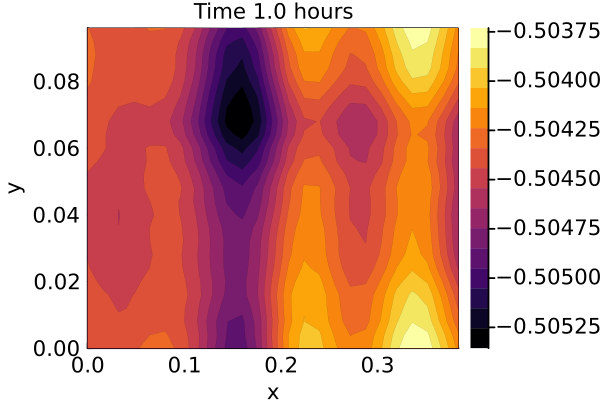}
          \caption{Surface height after $800$ time steps.}
          \label{fig_stable_soil_800}
    \end{subfigure}\\ \vspace{2mm}
	\begin{subfigure}[t]{0.42\textwidth}
        \includegraphics[width=\textwidth]{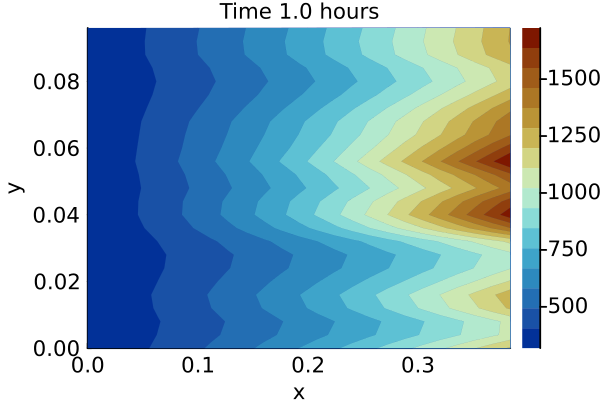}
        \caption{Sediment concentration  after $800$ time steps.}
        \label{fig_stable_c}
    \end{subfigure} \hspace{2mm}
    \begin{subfigure}[t]{0.42\textwidth}
        \includegraphics[width=\textwidth]{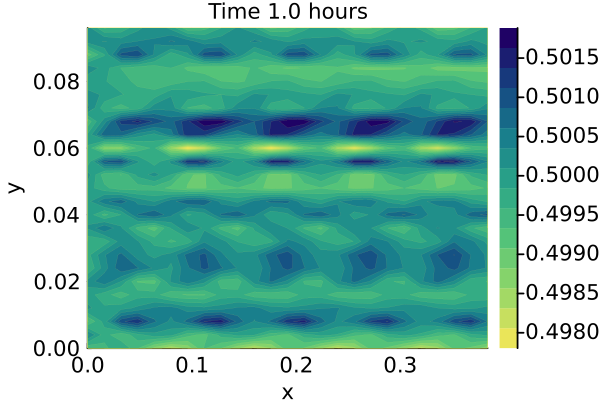}
        \caption{Water height after $800$ time steps.}
        \label{fig_stable_h}
    \end{subfigure}
    \caption{Heat maps of the surface height in millimeters $K=K_e$ after $200$ time steps~(a) and $800$ time steps~(b), sediment concentration in grams per cubic meters~(c) and water height in millimeters~(d) both after $800$ time steps, in the domain $[0,L_x] \times [0,L_y]$, for $K=K_e$. Scales are given by the color bar to the right of each picture.}
    \label{fig_stable}
\end{figure}

The constant of creep $K$ plays an important role in the behavior of the system. If $K$ is  large enough, the system is stable around the stationary solution of the flat plane, and when it is smaller the system is unstable, and channels are dug on the surface.
The stability analysis of the system is performed in \cite{binard2024well}. First, we show some numerical results with the creep parameter $K$:
$$
\displaystyle
K=K_e=\frac{5\times 10^{-4}}{3600}m^2s^{-1},
$$ 
which correspond to a stable case.
\medskip

Figures~\ref{fig_stable_soil_200} and~\ref{fig_stable_soil_800} show the surface height $z$, after $200$ and $800$ time steps. We observe that the initial perturbations have been flattened. Indeed, after $800$ time steps the amplitude of the variations of the surface height is $2 \%$ of the initial amplitude.
Then, Figure~\ref{fig_stable_c} shows the concentration of sediments in water, after $800$ time steps. The concentration at the left of the domain is the same order as $c_0 = 317$g/m$^3$, and is larger on the right due to erosion. Figure~\ref{fig_stable_h} shows the water height after $800$ time steps of the scheme. Its value is almost constant, around $0.5$mm. \newline

\begin{figure}[ht]
	\centering
     \begin{subfigure}[t]{0.45\textwidth}
          \centering
          \includegraphics[width=\textwidth]{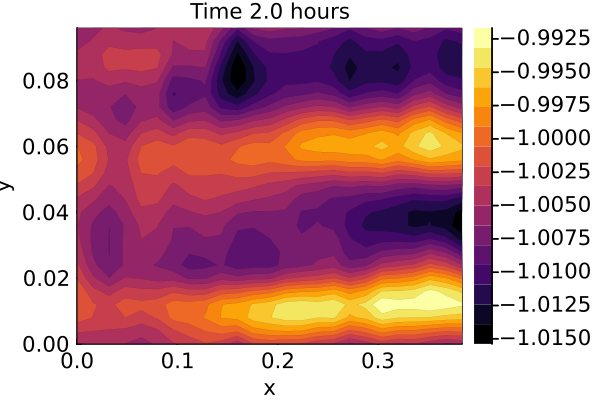}
          \caption{Surface height for $K = \frac{K_e}{20}$.}
          \label{fig_part_sol_unstable_20}
     \end{subfigure}\hspace{5mm}
     \begin{subfigure}[t]{0.45\textwidth}
          \centering
          \includegraphics[width=\textwidth]{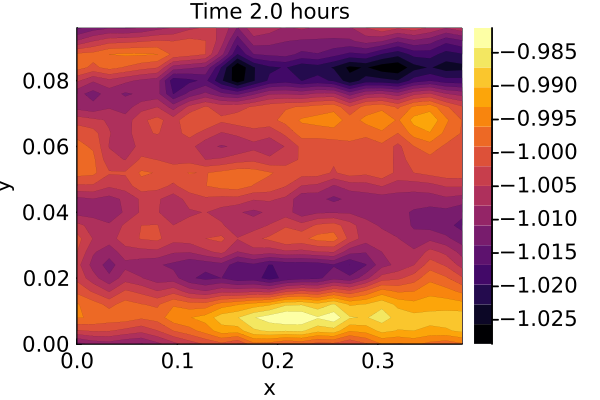}
          \caption{Surface height for $K = \frac{K_e}{50}$.}
          \label{fig_part_sol_unstable_50}
     \end{subfigure}
     \caption{Heat maps of the surface height in millimeters for $K=K_e/20$~(A) and $K=K_e/50$~(B) after $1600$ time steps, in the domain $[0,L_x] \times [0,L_y]$.}
     \label{fig_part_sol_unstable}
\end{figure}
     
\begin{figure}[ht]
	\centering
     \begin{subfigure}[t]{0.325\textwidth}
          \centering
          \includegraphics[width=\textwidth]{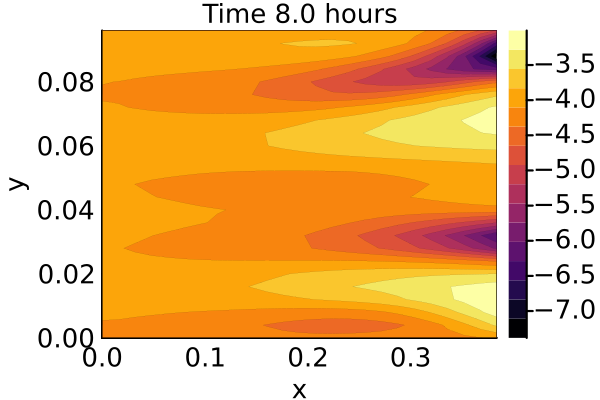}
          \caption{Surface height.}
          \label{fig_z_unstable_later}
     \end{subfigure} \hspace{-0mm}
     \begin{subfigure}[t]{0.325\textwidth}
          \centering
          \includegraphics[width=\textwidth]{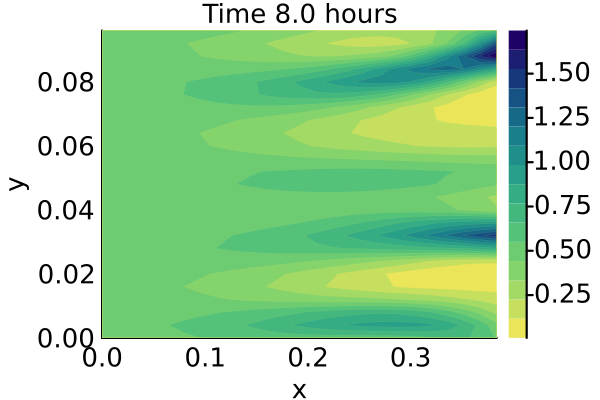}
          \caption{Water height.}
          \label{fig_h_unstable_later}
     \end{subfigure} \hspace{-1mm}
     \begin{subfigure}[t]{0.325\textwidth}
          \centering
          \includegraphics[width=\textwidth]{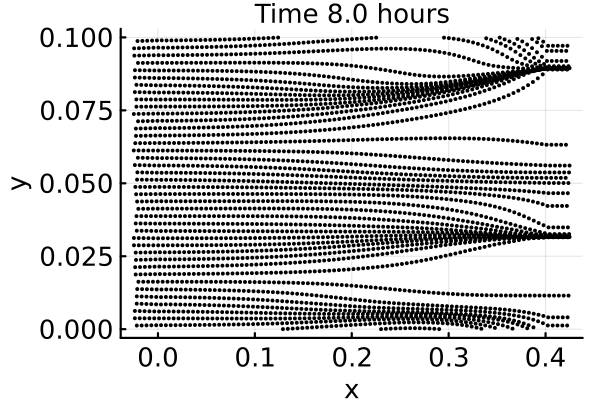}
          \caption{Particles positions.}
          \label{fig_part_unstable_later}
     \end{subfigure}
     \caption{Heat maps of the surface height~(A) water height~(B) in millimeters and particle positions~(C) for $K=K_e/50$ after $6400$ time steps, in the domain $[0,L_x] \times [0,L_y]$.}
     \label{fig_part_sol_unstable2}
\end{figure}
In Figure~\ref{fig_part_sol_unstable_20} and~\ref{fig_part_sol_unstable_50} we display the surface height, when $K=K_e/20$ and $K=K_e/50$. In these cases the system is unstable at some wavelengths, and we expect to observe a transverse instability. We can observe the formation of channels in the soil, in the flow direction. The amplitude of these channels is about $32 \%$ of the amplitude of the initial perturbations in the first case, and a bit more than $57 \%$ in the second case. 
Moreover, the depth of the channels is bigger when $K$ is smaller, in accordance with the stability results of \cite{binard2024well}.
%Moreover, we observe two channels in the first case, while in the second case there are three narrower channels. Hence, the depth of the channels is bigger when $K$ is smaller and their width is larger for larger values of $K$, in accordance with the stability results of \cite{binard2024well}.
After a sufficiently large number of time iterations with $K=K_e/50$, channels are deeper and water is concentrated inside these channels. Water height vanishes in highest areas, as in figure~\ref{fig_h_unstable_later}. We observe in figure~\ref{fig_part_unstable_later} that these areas do not contain particles. Particles gather inside the channels, so we could consider to merge some particles when they are too close, to improve the method. The surface height is represented in figure~\ref{fig_z_unstable_later}, and has variations of $4$ millimeters, which is $57$ times bigger than the amplitude of the initial random perturbations.\\

In \cite{binard2024well}, system \eqref{eq_landscape_model} was solved with a finite-volume method. This scheme can only be used with a positive water height. If the water height approaches zero at some points in the domain, some negative values of $h$ can appear in the computed solution. A finite-volume scheme that can handle dry areas could be designed, as in \cite{delestre2010simulation}, but require a more sophisticated method. Our Particle method is able to cope with the appearance of regions of vanishing water height (these areas simply do not contain any particles). Consequently, this Particle method can be used on more general landscapes than the finite-volume method of~\cite{binard2024well}, in particular, landscapes which may contain valleys and mountains.

\paragraph{Convergence tests}

\begin{figure}[ht]
	\centering
     \begin{subfigure}[t]{0.45\textwidth}
          \centering
          \includegraphics[width=\textwidth]{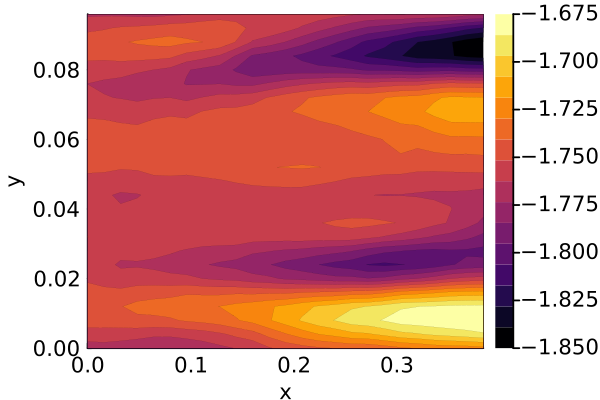}
          \caption{$dt=4.5$ s, $\varepsilon=h/10$ cm and $\Delta s = 0.005$ s, $2800$ time steps.}
          \label{fig_part_sol_unstable_compar1}
     \end{subfigure}\\
     \begin{subfigure}[t]{0.45\textwidth}
          \centering
          \includegraphics[width=\textwidth]{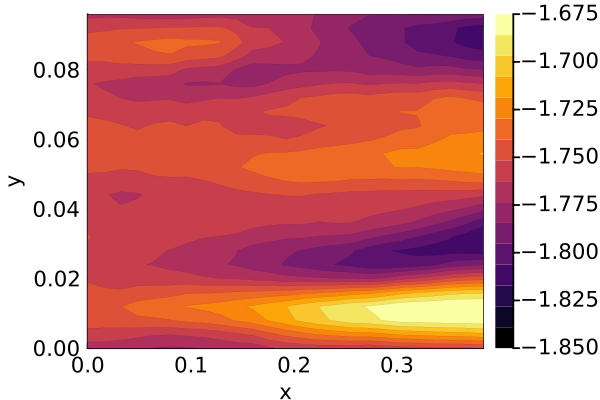}
          \caption{$dt=2.25$ s, $\varepsilon=h/10$ cm and $\Delta s = 0.005$ s, $5600$ time steps.}
          \label{fig_part_sol_unstable_compar_dts_petit}
     \end{subfigure}
     \begin{subfigure}[t]{0.45\textwidth}
          \centering
          \includegraphics[width=\textwidth]{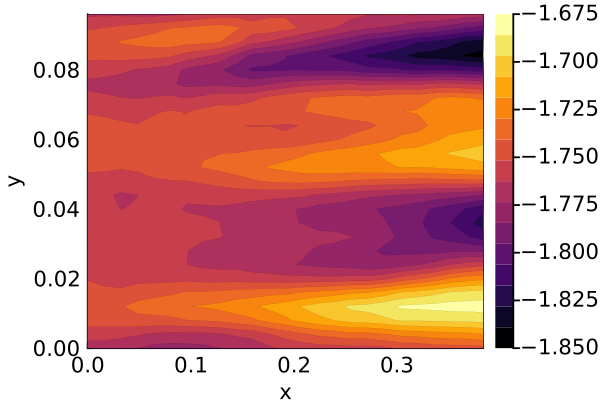}
          \caption{$dt=9$ s, $\varepsilon=h/10$ cm and $\Delta s = 0.005$ s, $1400$ time steps.}
          \label{fig_part_sol_unstable_compar_dts_grand}
     \end{subfigure}
     \caption{Heat maps of the surface height in millimeters for $K=K_e/50$ at time $3.5$ hours, in the domain $[0,L_x] \times [0,L_y]$. Parameters of table~\ref{tab_param_num_landscape}~(A), smaller time step~(B), bigger time step~(C).}
     \label{fig_part_sol_unstable_compar_1}
\end{figure}

\begin{figure}[hbtp]
	\centering
     \begin{subfigure}[t]{0.45\textwidth}
          \centering
          \includegraphics[width=\textwidth]{Images_part/z_part_K=e50_etape2800.png}
          \caption{$dt=4.5$ s, $\varepsilon=h/10$ cm and $\Delta s = 0.005$ s.}
          \label{fig_part_sol_unstable_compar12}
     \end{subfigure}\\[3mm]
     \begin{subfigure}[t]{0.45\textwidth}
          \centering
          \includegraphics[width=\textwidth]{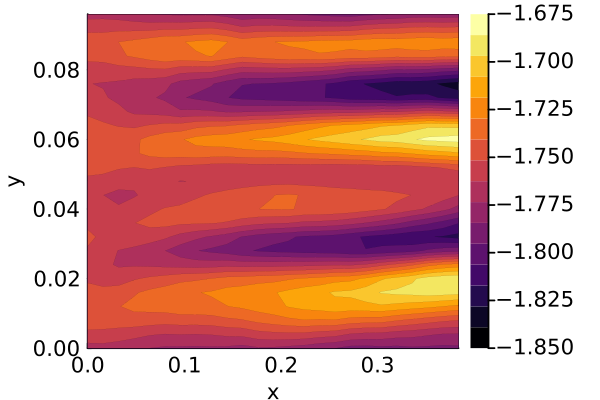}
          \caption{$dt=4.5$ s, $\varepsilon=h/5$ cm and $\Delta s = 0.005$ s.}
          \label{fig_part_sol_unstable_compar_eps_grand}
     \end{subfigure}
     \hspace{2mm} 
     \begin{subfigure}[t]{0.45\textwidth}
          \centering
          \includegraphics[width=\textwidth]{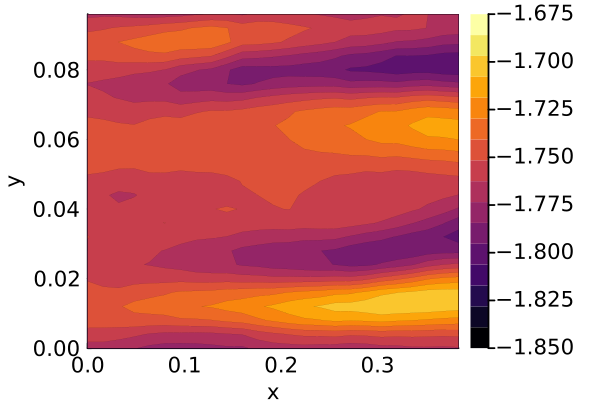}
          \caption{$dt=4.5$ s, $\varepsilon=h/20$ cm and $\Delta s = 0.005$ s.}
          \label{fig_part_sol_unstable_compar_eps_petit}
     \end{subfigure}\\[3mm]
     \begin{subfigure}[t]{0.45\textwidth}
          \centering
          \includegraphics[width=\textwidth]{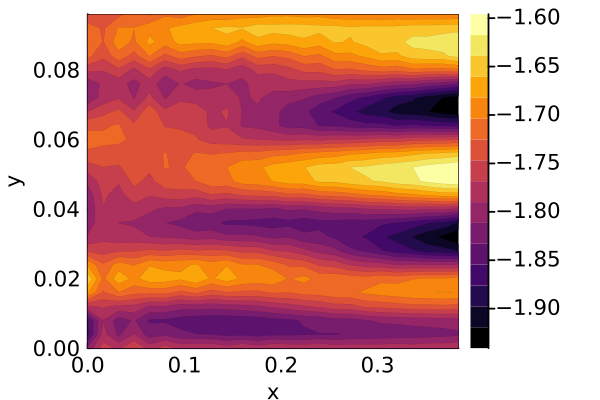}
          \caption{$dt=4.5$ s, $\varepsilon=h/10$ cm and $\Delta s = 0.01$ s. Different color bar scale.}
          \label{fig_part_sol_unstable_compar_ds_grand}
     \end{subfigure}
     \hspace{2mm} 
     \begin{subfigure}[t]{0.45\textwidth}
          \centering
          \includegraphics[width=\textwidth]{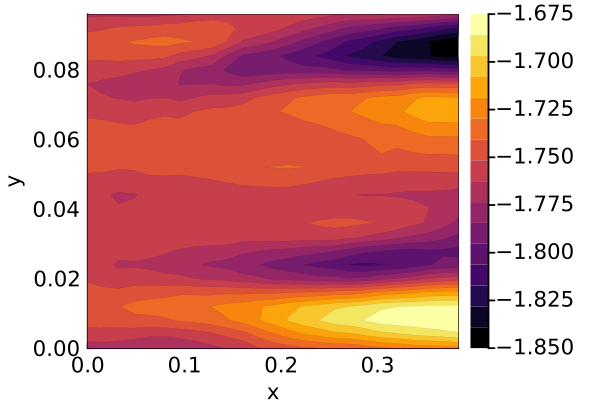}
          \caption{$dt=4.5$ s, $\varepsilon=h/10$ cm and $\Delta s = 0.0025$ s.}
          \label{fig_part_sol_unstable_compar_ds_petit}
     \end{subfigure}
     \caption{Heat maps of the surface height in millimeters for $K=K_e/50$ at time $3.5$ hours ($2800$ time steps), in the domain $[0,L_x] \times [0,L_y]$. Scales are the same for all pictures except (D) which is larger. Parameters of table~\ref{tab_param_num_landscape}~(A), fewer/more particles in the transverse direction~(B)/(C), fewer/more particles in the longitudinal direction~(D)/(E).}
     \label{fig_part_sol_unstable_compar_2}
\end{figure}

Figures~\ref{fig_part_sol_unstable_compar_1} and~\ref{fig_part_sol_unstable_compar_2} show results of the simulations with $K = K_e/50$ at time $3.5$ hours, for different values of the numerical parameters. In Figure~\ref{fig_part_sol_unstable_compar_1}, the time step of the landscape evolution model vary. We observe similar behavior between the initial figure~\ref{fig_part_sol_unstable_compar1}, when the time step is reduced~\ref{fig_part_sol_unstable_compar_dts_petit} and when it is increased~\ref{fig_part_sol_unstable_compar_dts_grand}.

Then, when the number of particles on the domain is reduced in the transverse or the longitudinal direction ($\varepsilon$ or $\Delta_s$ bigger) as shown in Figures~\ref{fig_part_sol_unstable_compar_eps_grand} and\ref{fig_part_sol_unstable_compar_ds_grand} the channels are shallower. On the other hand, if the number of particles on the domain is increased ($\varepsilon$ or $\Delta s$ smaller) in Figures~\ref{fig_part_sol_unstable_compar_eps_petit} and \ref{fig_part_sol_unstable_compar_ds_petit}, the solution is quite similar to the reference solution \ref{fig_part_sol_unstable_compar12}.

\section{Conclusion and perspectives}

Motivated by applications in the simulation of landscape evolution models, we have proposed a new method to solve scalar equations in a divergence form that preserves the positivity of the solutions. This is a Particle method and, to our knowledge, this is the first time it is applied to solve stationary problems. We have proven the convergence of the scheme under some regularity assumptions on the data (which in turn guarantee the smoothness of the solution), under some (classical) restrictions on the numerical parameter and under the assumption that the characteristic curves fulfill the whole computational domain. This later assumption is restrictive, but we observe that the numerical solution is well behaved even in the presence of dry areas. When the source term vanishes in these areas, one can extend the solution by assuming it vanishes also. We have performed several convergence tests which confirm our convergence result. We have also applied our numerical scheme in the context of landscape evolution models when fluid height vanishes and standard finite-volume methods may be numerically unstable.

%In this chapter we saw that the particle scheme behaves well for the resolution of stationary linear hyperbolic equations. We proved the convergence of the scheme under appropriate hypotheses on the data and under the assumption that the characteristic curves cover the entire domain. This last assumption is restrictive, but we observed in the simulations that the scheme still functions despite the presence of dry areas in the domain. Then we applied this scheme to a problem of geophysical flow, and obtained results in cases that can not be solved by an elementary finite-volume scheme. \newline

Regarding applications to the simulation of landscape evolution model, some further developments are needed to make our scheme more efficient or applicable to more general landscapes. First, this scheme is very costly compared to a finite-volume scheme. For the simulation of the landscape evolution model, the computation time of one time step is $200$ times shorter with the finite-volume scheme than with the particle scheme, to obtain a similar precision on the solutions. This is due to the non-linear terms depending on $\nabla h$ which have to be computed by a sum over the neighboring particles. However, note that the computation of the Particle method can be parallelized, as the computations for each trajectory can be run on independent processors. %Thus the method can be easily run on Graphic Processing Units (GPU), for which considerable time saving can be expected. 
From a numerical point of view, we can also improve the accuracy of the scheme in the non-linear case: here, we proposed a linearized version of the problem at each time step which may generate a loss of accuracy. One may consider also solving directly the non-linear problem at each time step by a Newton method or a fixed point procedure.
%Furthermore, because of the linearization of the equations, the convergence to the solution of the non linear system is not guaranteed.
Note also that particles computed as solutions to the linear system tend to gather in the lowest parts of the surface. If they are too close, then the gradient of $h$ can become singular, which leads to considerable errors in the computation of the solution. Consequently, the time steps have to be taken small enough to obtain good convergence. One could also consider merging particles that are too close to prevent the formation of singularities.

%In addition, if there is no explicit formula for the initial solutions $h(0,.)$ and $c(0,.)$, we need to find a good approximation of these initial solutions with the particle scheme. However, a fixed point method, consisting of iterations of the linearized solutions starting from constants water height and sediment concentration will not converge if the initial surface $z_0$ is too rough.
%The convergence may be improved by picking a non-constant water height, but finding a good initial guess might not be easy. But on the positive side, the computation of the Particle_method is easily parallelized, as the computations for each trajectory can be run on independent processors. Thus the method can be easily run on Graphic Processing Units (GPU), for which considerable time saving can be expected. 

\appendix

\section{Proof of proposition~\ref{prop_ex_flow}} \label{appendix_proof_prop_assump}

\begin{proof}
Items $(i)$ and $(ii)$ of the theorem are proved by a direct application of the Cauchy-Lipschitz theorem. Then we prove Item $(iii)$. First, as ${\bf a}$ is $P$-Lipschitz continuous, by \eqref{eq_a_incoming} and the fact that $\Phi(0,\xi) = (0,\xi)$, we have for $t\geq 0$:
\begin{align*}
\displaystyle
|\Phi(t,\xi) - \Phi(0,\xi)| 
&\leq \int_0^t \left|{\bf a}(\Phi(s,\xi)) \right| ds 
\leq t \beta + \int_0^t |{\bf a}(\Phi(s,\xi))- {\bf a}(\Phi(0,\xi))| ds\\
& \leq t \beta + P \int_0^t |\Phi(s,\xi) - \Phi(0,\xi) | ds.
\end{align*}
A similar inequality can be shown for $t\leq 0$. Then, by Gronwall's lemma, denoting by $\Phi_1(t,\xi)$ the first component of $\Phi(t,\xi)$, and using that $\Phi_1(0,\xi)=0$ we get:
$$|\Phi_1(t,\xi)| \leq \frac{\beta}{P} \left( e^{P|t|}-1 \right).$$
Thus for all $|t| \leq \ell := \frac{1}{P}\ln(1+\frac{L P}{\beta})$, we have $|\Phi_1(t,\xi)| \leq L$. This implies that $\Phi_1(t,\xi) \geq -L$ for $t \in (-\ell, 0]$. Then for $t >0$ we show that $\Phi_1(t,\xi)>0$. Indeed, because of~\eqref{eq_a_incoming} we have $\Phi_1(t,\xi)>0$ for $t \in (0,\varepsilon_0]$ with $\varepsilon_0>0$ small. By way of contradiction, suppose that $t_1>0$ is the smallest time such that $\Phi_1(t_1,\xi)=0$. So we have $\Phi_1(t,\xi) >0$ for $t \in [t_1-\varepsilon_1,t_1)$ with $\varepsilon_1>0$ small enough. This contradicts the fact that 
$$
\displaystyle
\dt \Phi_1(t_1,\xi) = {\bf a}_1(\Phi(t_1,\xi)) = {\bf a}_1(0, \Phi_2(t_1,\xi), \dots, \Phi_d(t_1,\xi)) > 0,
$$
by~\eqref{eq_a_incoming}. Hence $\Phi_1(t,\xi) > -L$, $\forall (t,\xi) \in (-\ell,+\infty) \times \RR^{d-1}$. This shows the item $(iii)$ (which is also Assumption~\ref{assump_flow}~(i)).\\

Finally we prove Item $(iv)$. We first show that with the sole property~\eqref{eq_a_incoming}, there exists $\delta > 0$ such that $(-\delta,\delta) \times \RR^{d-1} \subset \Phi(\Omega_\ell)$. Indeed, since ${\bf a }$ is $P$-Lipschitz continuous and with~\eqref{eq_a_incoming}, setting $\delta = \frac{\alpha}{2}\min \left( \frac{1}{P}, \ell \right)$ we have:
\begin{align*}
 {\bf a }_1 (x_1,\eta) \geq \alpha - P |x_1| > \frac{\alpha}{2}, \quad \forall (x_1,\eta) \in (-\delta,\delta) \times \RR^{d-1}.
\end{align*}
Now, for such $(x_1,\eta)$, define $\chi(t)$ as the solution of
\begin{align}
\chi'(t) = {\bf a} (\chi(t)), \quad \chi(0) = (x_1,\eta).
\label{eq_lemma_caract_eta}
\end{align}
Without loss of generality, suppose $x_1 \in (-\delta,0)$. We have 
\begin{align}
    \chi_1(t_1) - \chi_1(t_0) = \int_{t_0}^{t_1} {\bf a}_1 (\chi(s)) ds
    \geq \frac{\alpha}{2}(t_1-t_0) >0,
    \label{eq_lemma_caract_int}
\end{align}
for all $t_1 > t_0 > 0$ provided that $\chi_1(s) \in (-\delta,\delta)$ , $\forall s \in [0,t_1]$. Therefore $\chi_1$ is strictly increasing on some interval $(0,T_1]$, with $T_1>0$. Since $0 \in (-\delta, \delta)$, there exists a time $t_2>0$ such that $0<t_2<T_1$ and $\chi_1(t_2)=0$. Furthermore, using \eqref{eq_lemma_caract_int} with $(t_0,t_1) = (0,t_2)$, $t_0$ satisfies 
$$\delta > -x_1 = \int_0^{t_2} {\bf a}_1 (\chi(s)) ds > \frac{\alpha t_2}{2},$$
hence $t_2 < 2\delta / \alpha \leq \ell$. We denote $\chi(t_2) = (0,\xi)$. Then, by the uniqueness in the Cauchy-Lipschitz theorem we have $\Phi(t-t_2,\xi) = \chi(t)$ for all $t \in (0, t_2)$. Thus for $t=0$, one has:
$\Phi(-t_2,\xi) =\chi(0)=(x_1, \eta)$. This shows that $(-\delta,\delta) \times \RR^{d-1} \subset \Phi(\Omega_\ell)$.\\

Now we assume that ${\bf a}_1 (x_1,\xi) \geq \alpha >0$ for all $(x_1,\xi) \in U$ and consider $(x_1,\eta) \in [\delta,+\infty) \times \RR^{d-1}$. Let us consider the solution $\chi(t)$ of \eqref{eq_lemma_caract_eta}. 
%On the one hand we have 
%$$\chi_1(t) - x_1 = -\int_{0}^{t} {\bf a}_1 (\chi(-s)) ds \leq \alpha t, \quad \text{for  } t >0. $$
For $t>0$, we have:
\begin{align*}
    |\chi(-t) - \chi(0)| 
    = |\int_{0}^{t} \left( {\bf a} (x_1,\eta) + {\bf a} (\chi(-s)) - {\bf a} (\chi(0)) \right) ds|
    \leq | {\bf a} (x_1,\eta)|t + P \int_0^t |\chi(-s) - \chi(0) | ds.
\end{align*}
Hence, by Gronwall's lemma, 
$$|\chi(-t)-\chi(0)| \leq \frac{|{\bf a} (x_1,\eta)|}{P} (e^{Pt}-1).$$
This shows that $\chi(-t)$ is defined for all $t>0$ as long as $\chi(-t) \geq -L$. Thus, there exists $t_0 >0$ such that $\chi_1(-t_0)=0$. Setting $\chi(-t_0) = (0,\xi)$, we have $\Phi(t_0,\xi) = (x_1,\eta)$ which finishes the proof of Assumption~\eqref{assump_flow}~(ii).
\end{proof}

\bibliographystyle{plain}
\bibliography{biblio_particle.bib}

\end{document}